\def\thmt@refnamewithcomma #1#2#3,#4,#5\@nil{%
  \@xa\def\csname\thmt@envname #1utorefname\endcsname{#3}%
  \ifcsname #2refname\endcsname
    \csname #2refname\expandafter\endcsname\expandafter{\thmt@envname}{#3}{#4}%
  \fi
}
\declaretheorem[numberwithin=section]{theorem}
\declaretheorem[sibling=theorem]{proposition}
\declaretheorem[sibling=theorem]{corollary}
\declaretheorem[sibling=theorem]{conjecture}
\declaretheorem[sibling=theorem]{lemma}
\declaretheorem[sibling=theorem,style=definition]{definition}
\declaretheorem[sibling=theorem,style=remark]{remark}
\newcounter{claimCounter}[theorem]
\newcounter{subClaimCounter}[claimCounter]
\newcounter{MainSplitClaim}
\newlist{equivalent}{enumerate}{1}
\setlist[equivalent,1]{label=\textup{(\arabic*)}}
\newlist{sublemma}{enumerate}{1}
\setlist[sublemma,1]{label=\textup{(\alph*)}}
\newlist{sublemma*}{enumerate*}{1}
\setlist[sublemma*,1]{label=\textup{(\alph*)},afterlabel=\hspace{5pt}}
\newlist{orderedlist}{enumerate}{1}
\setlist[orderedlist,1]{label=\textup{(\roman*)}}
\newlist{orderedlist*}{enumerate*}{1}
\setlist[orderedlist*,1]{label=\textup{(\roman*)},afterlabel=\hspace{3pt}}
\newcommand{\smallseq}[1]{\langle{#1}\rangle}
\newcommand{\set}[2]{\left\{#1 \,:\, #2\right\}}
\newcommand{\rest}[1]{\restriction{\,{#1}}} 
\newcommand{\tth}{{}^{\textup{th}}}
\newcommand{\conc}{\hat{\,\,}}
\newcommand{\andd}{\,\,\,\&\,\,\,}
\renewcommand{\setminus}{\smallsetminus}
\DeclareMathOperator{\converge}{\downarrow}
\DeclareMathOperator{\diverge}{\uparrow}
\newcommand{\emptystring}{{\lambda}}
\renewcommand{\setminus}{\smallsetminus}
\newcommand{\w}{\omega}
\newcommand{\s}{\sigma}
\newcommand{\vphi}{\varphi}
\renewcommand{\epsilon}{\varepsilon}
\renewcommand{\le}{\leqslant}
\renewcommand{\ge}{\geqslant}
\renewcommand{\preceq}{\preccurlyeq}
\renewcommand{\succeq}{\succcurlyeq}
\newcommand{\Nat}{\mathbb N}
\newcommand{\Tur}{\textup{\scriptsize T}}
\newcommand{\fin}{\textup{\texttt{fin}}}
\newcommand{\lowtwo}{\ensuremath{\textup{low}_2}}
\newcommand{\outside}[1]{{#1}^\complement}
\newcommand{\domin}{\vphi}
\newcommand{\Left}[2]{#1<_L#2}
\DeclareDocumentCommand{\SSS}{ m g}
{
\IfNoValueT{#2}{S(#1)}
\IfNoValueF{#2}{S(#1)_{#2}}
}
\DeclareDocumentCommand{\Y}{ m g}
{
\IfNoValueT{#2}{Y(#1)}
\IfNoValueF{#2}{Y(#1)_{#2}}
}
\newcommand{\seqell}[1]{\bar\ell(#1)}
\newcommand{\oneell}[2]{\ell(#1)_{#2}}
\begin{document}
	

\title{Low$_2$ computably enumerable sets have hyperhypersimple supersets}

\author{Peter Cholak, Rodney Downey and Noam Greenberg}
\address{{\bf Peter Cholak} Mathematics Department,
University of Notre Dame, South Bend, IN 46556 USA}
\email{Peter.Cholak.1@nd.edu}
\address{{\bf Rodney~Downey and Noam Greenberg} School of Mathematics and Statistics,
Victoria University, P.O. Box 600, Wellington, New Zealand.}
\email{rod.downey@vuw.ac.nz, noam.greenberg@vuw.ac.nz}

\thanks{The authors thank  the Marsden Fund of New Zealand.}

\begin{abstract} 
A longstanding question is to characterize the lattice of
supersets (modulo finite sets), $\+L^*(A)$,  of a low$_2$ computably enumerable 
(c.e.) set. The conjecture is that
$\+L^*(A)\cong {\mathcal E}^*$.
In spite of claims in the literature, this longstanding question/conjecture
remains open. We contribute to this problem by solving one of the main 
test cases. We show that if c.e.\ $A$ is low$_2$ then $A$ has an atomless
 hyperhypersimple superset. 
In fact, 
  if $A$ is c.e.\ and low$_2$, then for any 
 $\Sigma_3$-Boolean algebra~$B$ there is some c.e.\ $H\supseteq A$ such that $\+L^*(H)\cong B$.
\end{abstract}

\maketitle

\section{Introduction}

The concern of this paper is the interplay of two fundamental
structures arising in classical computability, the lattice of
computably enumerable sets and Turing reducibility $\le_T$.
 Turing \cite{Turing:39} defined $A\le_T B$ 
as the most general notion of reducibility between 
problems coded as sets of non-negative integers.
Reducibilities 
give rise to partial orderings which 
measure relative computational complexity.
In the classic paper \cite{P}, Post suggested that the study of
the lattice of computably (recursively) enumerable (c.e.) sets was
fundamental in computability theory.  
In the words of Soare \cite[p.\:viii]{Soa}
\begin{quote}
``Post \cite{P} stripped away the formalism associated with the development
of recursive function theory in the 1930's and revealed in a clear informal 
way the essential properties of r.e.\ sets and their role in 
G{\"o}del's incompleteness theorem''.
\end{quote}
Computably enumerable sets are the halting sets of Turing machines, i.e.,
 the domains of partial computable functions. They thus 
  represent natural ``semi-decidable'' problems such as 
instances of Hilbert's 10th problem, the Entschiedungsproblem, 
the Post Correspondence Problem,  sets of 
consequences of formal systems, and many others.

The interplay of these two basic objects, Turing reducibility and c.e.\ sets, 
has a long and rich history.
The computably enumerable sets under union and intersection
form a lattice, denoted by ${\mathcal E}$, and 
a common object of study is
${\mathcal E}^*$, which is ${\mathcal E}$ modulo 
the congruence $=^*$, where $A=^*B$ means that the symmetric 
difference of $A$ and $B$ is finite.  The Turing degrees of  c.e.\ sets 
form
an upper semilattice, denoted by ${\mathcal R}$.  Ever since the
groundbreaking paper of Post, there has been a persistent intuition
that structural properties of computably enumerable sets have
reflections in their degrees, and vice versa.  In particular, {\em
  definability} in ${\mathcal E}$ should be linked with {\em
  information content} as measured by Turing reducibility. 

The simplest possible illustration of this is the fact that the
complemented members of ${\mathcal E}$ are exactly the members of
${\bf 0}$, the degree of the computable sets.  One of the main avenues of attack has been to link 
properties of c.e.\ sets with the jump operator, $A'=\{e\mid \Phi_e^A(e)\converge\}$, the halting problem \emph{relative to} $A$.
The jump operator gives one way of understanding information content of 
members of ${\mathcal R}$. A deep example is Martin's result \cite{mart} that the Turing degrees of maximal sets are
exactly the high c.e.\ Turing degrees. Here a c.e.\ set $A$ is \emph{high} if $\emptyset''\equiv_\Tur A'$, i.e., if the jump operator does not distinguish between~$A$ and the halting problem $\emptyset'$. A coinfinite c.e.\ set~$A$ is \emph{maximal} if it represents a co-atom in ${\mathcal E^*}$, that is, if for every c.e.\ set $W\supseteq A$, either $A =^* W$ or $A=^* \Nat$. 

On the other hand, we call $A$ \emph{low} if $A'\equiv_\Tur \emptyset'$, that is, if the jump operator does not distinguish between~$A$ and the computable sets. Such sets have relatively little information content, so we would guess that low sets should perhaps share some properties with the computable sets. 

This intuition has seen many realizations in the study of the Turing degrees of low c.e.\ sets. One such example is  
Robinson's \cite{Rob} result that 
if we have computably enumerable sets $B\le_T A$ with $B$ 
low, then we can split $A=A_1\sqcup A_2$ as  two c.e.\ sets with 
$A_1\oplus B$ Turing incomparible with $A_2\oplus B$. This result shows that
splitting and density can be combined above low c.e. degrees,
something that, famously, Lachlan \cite{monster}
showed {fails} in general.

How is this intuition reflected in the properties of 
low c.e.\ sets in ${\mathcal E}^*$? 
Soare \cite{auto2} showed that if 
$A$ is low then 
$\+L^*(A)\cong {\mathcal E}^*$. 
Here $\+L^*(A)$ denotes the lattice of c.e.\ supersets of $A$ modulo~$=^*$.
Thus it is impossible to distinguish~$A$ from~$\emptyset$ using properties 
of its lattice of supersets.
We remark that  Soare proved  
that the isomorphism is \emph{effective} in that
there are computable function $f$ and $g$
with $W_e\mapsto W_{f(e)}$ (from $\+L^*(A)$ to 
${\mathcal E}^*$) and $W_e\mapsto W_{g(e)}$
for the return map,
inducing the isomorphism.

Note that Martin's Theorem says that if a c.e.\ degree is high then
then it contains a maximal set~$A$.
If~$A$ is maximal than   $\+L^*(A)$ is the  two-element lattice --- as far as possible from $\+E^*$. It is natural to wonder what level of computational
power for degrees stop c.e.\ sets 
with those degrees from 
resembling computable sets. 
A natural demarcation seems to be at low$_2$. Recall that a set $A$ is low$_2$ if the \emph{double} jump does not distinguish between~$A$ and the computable sets: $A''\equiv_\Tur \emptyset''$.\footnote{Indeed the reader should
  recall that, more generally, a set $A$ is low$_n$ if $A^{(n)}$, the $n\tth$ jump of 
$A$, is as low as possible: $A^{(n)} =
  \emptyset^{(n)}$, equivalently, $\Delta_{n+1} = \Delta^A_{n+1}$; and a $\Delta_2$ set~$A$ is
  high$_n$ if its $n\tth$ jump is as high as possible: $A^{(n)} = \emptyset^{(n+1)}$, equivalently, $\Delta_{n+2} =
  \Delta^A_{n+1}$.}  
Shoenfield \cite{Sho} proved that if a c.e.\ Turing degree is 
not low$_2$, then it contains a c.e.\ set~$B$ that has {no} maximal superset.
Hence, in particular, $\+L^*(B)\not \cong {\mathcal E}^*$.
It follows that if there is a collection~${\mathcal J}$ of degrees characterized 
by their jumps, such that 
 deg$(A)\in {\mathcal J}$ implies $\+L^*(A)\cong {\mathcal E}^*$, then 
${\mathcal J}$ must be a subclass of the low$_2$ degrees.

Maass \cite{ma} extended Soare's result to prove that 
for a c.e.\ set~$A$, 
$\+L^*(A)$ is {effectively} isomorphic to ${\mathcal E}^*$ 
if and only if $A$ is semilow$_{1.5}$. Here a c.e.\ set $A$ 
is called semilow$_{1.5}$ if 
$\{e:W_e\subseteq^* A\}$ is~$\Sigma_2$.
Being semilow$_{1.5}$
 is a ``pointwise'' variation of lowness. It is known that
 a c.e.\ degree that is not low contains a c.e.\ set 
which is not semilow$_{1.5}$.  In particular, there are low$_2$ 
sets which are not semilow$_{1.5}$ (Downey, Jockusch and Schupp \cite[Theorem 1.5]{DJS}).

The following conjecture has been around for some time.

\begin{conjecture}[Soare and others]
\label{conj} If $A$ is a coinfinite $\lowtwo$ c.e.\ set then
$\+L^*(A)\cong {\mathcal E}^*$.
\end{conjecture}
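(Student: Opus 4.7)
The plan is to attack the conjecture through its principal structural test case: if $\+L^*(A) \cong \+E^*$, then in particular every $\lowtwo$ c.e.\ set $A$ must have an atomless hyperhypersimple superset, and I would focus first on producing such a superset $H \supseteq A$, expecting the construction to illuminate the broader isomorphism problem. By Lachlan's characterisation, hyperhypersimplicity of $H$ is equivalent to $\+L^*(H)$ being a Boolean algebra, so together with atomlessness this pins $\+L^*(H)$ down as the countable atomless Boolean algebra, which is exactly the piece of $\+E^*$ that sits above a hyperhypersimple but not maximal set.

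The construction would be a c.e.\ enumeration of $H \supseteq A$ carried out on a tree of strategies satisfying two competing families of requirements. The \emph{hyperhypersimplicity} requirements assert that for each index of a uniform weak array $\{W_{f(e,n)}\}_n$, all but finitely many array members meet $H$, forcing the Boolean-algebra structure on $\+L^*(H)$. The \emph{atomlessness} requirements assert that for each c.e.\ $W \supseteq H$ with $W\setminus H$ infinite in $\overline{A}$, a c.e.\ set can be produced strictly between $H$ and $W$; these are met by reserving infinitely many elements of $\overline{A}$ as gap markers on which to build a proper splitting of $W\setminus H$. Since $A$ is $\lowtwo$, the predicate ``$W_e \cap \overline{A}$ is infinite'' is $\Sigma^0_3$ and hence $\emptyset''$-computable, which is exactly the oracle strength needed to guess the correct outcome at each node along a $\emptyset''$-recursive true path.

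The main obstacle will be the interaction between these two families in the absence of effective information about $\overline{A}$. Because $A$ need not be semilow$_{1.5}$, one cannot $\emptyset'$-decide whether a given $W_e$ meets $\overline{A}$; only a $\emptyset''$-approximation is available, and each such guess is revised infinitely often off the true path. A hyperhypersimplicity strategy that absorbs an array member into $H$ may destroy an atomlessness witness held by a lower-priority strategy; conversely, preserving room for a splitting may leave array members in $\overline{H}$, violating hyperhypersimplicity. Resolving this tension will require careful bookkeeping of reservations in $\overline{A}$, a pairing mechanism so that each splitting draws only on elements not yet claimed by any higher-priority absorption strategy, and delicate coordination along the tree to ensure that the true-path guesses about $\overline{A}$ are coherent between strategies at different nodes.

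Once the test case is secured, the proposed route to the full conjecture is to promote the local Boolean structure on $\overline{H}$ to a global isomorphism $\+L^*(A) \cong \+E^*$, for instance by iterating the construction to produce a nested system of atomless hyperhypersimple supersets whose quotients match those prescribed by a target copy of $\+E^*$. The hardest part will almost certainly be this global patching step: without an effective oracle for $\overline{A}$, lifting a coherent isomorphism from pieces of $\+L^*(A)$ to the whole appears to require techniques well beyond the Soare and Maass frameworks, and it is presumably here that previous attempts on the conjecture have foundered.
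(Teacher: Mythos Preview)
The statement you are attempting is a \emph{conjecture}, not a theorem: the paper states explicitly that it remains open, and its contribution is only the test case you identify first --- that every coinfinite $\lowtwo$ c.e.\ set has an atomless hyperhypersimple superset (\cref{thm:atomless}). Your closing acknowledgment that the ``global patching step'' is where previous attempts have foundered is accurate; neither you nor the paper has a proof of the full conjecture, and nothing in your proposal for that step goes beyond a hope.

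For the test case itself, your sketch is too vague to count as a plan, and the detail you do give diverges from the paper in a way that misses the central obstacle. You propose to secure hyperhypersimplicity via the weak-array characterisation and atomlessness by producing, for each c.e.\ $W\supseteq H$, a proper c.e.\ splitting of $W\setminus H$. The paper instead builds the Boolean-algebra structure \emph{directly}: it constructs a full binary tree of sets $Z(\rho)$ with $Z(\rho)=^* Z(\rho\conc 0)\sqcup Z(\rho\conc 1)$, each $Z(\rho)$ infinite in $\outside{H}$, and every $W_e$ already a finite Boolean combination of the $Z(\rho)$ modulo~$H$. Deciding each $W_e$ on each $Z(\rho)$ is a refinement of Lachlan's maximal-superset argument; the genuinely new difficulty is \emph{splitting} a single stream $Z(\rho)$ into two halves that are both infinite outside~$A$, and your proposal does not address it.

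The problem is that $\lowtwo$-ness, via true stages and $\Delta^0_3$ guessing, tells you that \emph{many} balls outside~$A$ arrive --- but not \emph{which} ones. If you naively divide a block of balls into a $0$-half and a $1$-half, the $A$-true balls may all land on one side. The paper's solution is a certification mechanism: a $\Delta^0_2$ function~$\domin$ dominating all $A$-computable functions is used to delay the release of each block until~$\domin$ certifies it; since~$\domin$ eventually dominates the $A$-computable function recording block endpoints, almost every certified block genuinely contains its quota of $A$-true balls and can be honestly split. Making this work forces a further layer of children below each splitting node, to absorb the timing mismatch between certification and the actual arrival of true balls, and the resulting interaction is the technical heart of the paper. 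Your phrases ``reserving infinitely many elements of $\overline{A}$ as gap markers'' and ``delicate coordination along the tree'' do not locate this difficulty or its resolution.
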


This conjecture is stated as an open question
in Soare \cite{Soa}.
We remark that Conjecture \ref{conj} has been claimed  as a theorem 
in several places, notably as being due to 
Harrington, Lachlan, Maass
and Soare as stated in  Harrington-Soare \cite[Theorem 
1.8]{HarringtonSoare96}, as well as in lectures by Soare in the late 1990's. But despite the best efforts of 
computability-theorists in the last 30 or so years, no proof has 
appeared,
so it is surely an open question by this stage.

Notice that for this conjecture to hold, we must need some other method 
of constructing the isomorphism than an effective isomorphism. 
Likely if Conjecture
\ref{conj} is true, then methods along the lines of the powerful 
$\Delta_3$ methods
introduced by Soare and Harrington \cite{HarringtonSoare96}
or Cholak \cite{Cholak:95} will likely be needed.
Cholak \cite{Cholak:95} 
gave some partial evidence for the validity of the conjecture 
by showing that every semilow$_2$ c.e.\ set with 
the \emph{outer splitting 
property} has 
$\+L^*(A)\cong {\mathcal E}^*$. It is not important for our story 
what these properties are, save to say that
they give weaker guessing 
methods than semilow$_{1.5}$-ness, but which are sufficient given the $\Delta_3$ isomorphism method. In this paper we make a modest contribution supporting Conjecture \ref{conj}.
We will soon explain what we mean by guessing, and how our work fits into the
programme of establishing the conjecture.

Lachlan \cite{Lachlan2}
proved that if $A$ is low$_2$ then $A$ has a maximal superset (i.e., 
$\+L^*(A)$ contains a co-atom). This is currently the strongest 
general result about all low$_2$ c.e. sets.
It is likely not too difficult to modify Lachlan's technique
to extend this result to $k$-quasimaximal sets, c.e.\ sets which are the 
intersections of exactly $k$ maximal sets, by simultaneously constructing
$k$ maximal supersets $M_1,\dots,M_k$ 
of $A$ such that $M_i\neq^*M_j$ for $i\ne j$. If $Q$ is $k$-quasimaximal
then $\+L^*(Q) $ is a $k$-atom boolean algebra.

Lachlan \cite{Lachlan:68} 
proved that  $\+L^*(A)$ is a Boolean algebra 
if and only if it is {hyperhypersimple}.
Hyperhypersimple sets  sets were defined, if not constructed, by Post \cite{P}.
Recall that a coinfinite c.e.\ set $A$ is called 
\emph{simple} if its complement ${A^\complement} = \Nat\setminus A$ contains no 
infinite c.e.\ subsets.
A much stronger property is that $A$ is \emph{hyperhypersimple},
which is defined
as there being  is no collection of infinitely many finite, pairwise disjoint, uniformly c.e.\ sets, such that each element of the collection intersects the complement of~$A$. However, Lachlan's characterisation in terms of Boolean algebras is used more often. A hyperhypersimple set~$A$ is \emph{atomless} if $\+L^*(A)$ is the atomless Boolean algebra. In a talk in 2006, 
Cholak pointed out the next test case for the low$_2$ conjecture: atomless
hyperhypersimple sets.
In this paper we affirm Cholak's conjecture.

\begin{theorem} \label{thm:atomless}
  Every coinfinite $\lowtwo\!$ c.e.\ set has an atomless hyperhypersimple superset. 
\end{theorem}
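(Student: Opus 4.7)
The plan is a $\Delta^0_3$ (equivalently, $\emptyset''$-guided tree of strategies) priority construction that enumerates $H\supseteq A$ together with an auxiliary family of c.e.\ supersets $\{H_\sigma\}_{\sigma\in 2^{<\omega}}$ of $H$, designed to satisfy, modulo finite sets,
\[
H_\emptyset =^* \omega, \qquad H_{\sigma 0}\cup H_{\sigma 1} =^* H_\sigma, \qquad H_{\sigma 0}\cap H_{\sigma 1} =^* H,
\]
together with $H_\sigma\setminus H$ infinite for every $\sigma$. These identities force the equivalence classes of the $H_\sigma$'s (as elements of $\+L^*(H)$) to generate a copy of the countable atomless Boolean algebra. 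The main requirement is $\+S_e$: every c.e.\ $W_e\supseteq H$ is, modulo finite, a finite union $\bigcup_{\sigma\in B_e} H_\sigma$ for some antichain $B_e\subseteq 2^{<\omega}$. Granting the $\+S_e$'s, $\+L^*(H)$ is precisely the countable atomless Boolean algebra, so $H$ is atomless hyperhypersimple by Lachlan's characterisation.

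The low$_2$ hypothesis enters via Robinson's domination property: every total $A'$-computable function is dominated by some $\emptyset'$-computable function, so the infinitary outcomes relevant to each strategy admit $\Delta^0_3$-approximation along a true path that is $\emptyset''$-computable. The $\+S_e$-module guesses, through tree branches, the profile of $W_e$ --- for each $\sigma$ at the level currently under consideration, whether $W_e$ is co-finite in $H_\sigma\setminus H$ (include $\sigma$ in $B_e$) or meets it only finitely often (exclude) --- and then, using $A$-permitting, acts by enumerating into $H$ those elements of $W_e\cap H_\sigma$ for $\sigma\notin B_e$, driving $W_e\cap H_\sigma$ down to $H$ mod finite. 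This is the $\Delta^0_3$ analogue of the low-permitting argument that Soare handles at the $\Delta^0_2$ level; the added subtlety is that the permission data must itself be approximated and corrected, which is why the whole construction lives on a tree and why Lachlan's low$_2$ maximal-superset construction is the natural starting point.

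The main obstacle I anticipate is the conflict between the $\+S_e$-actions (which push elements of various $H_\sigma\setminus H$ into $H$, shrinking those differences) and the atomlessness condition (which demands that each $H_\sigma\setminus H$ remain infinite). The natural remedy is to dedicate, for each tree node $\sigma$, an infinite reserve of ``protected'' witnesses inside $\overline{A}$ that no $\+S$-strategy of lower priority may enumerate into $H$; the crux of the verification is then a boundedness lemma showing that along the true $\emptyset''$-path only finitely many re-assignments of any profile $B_e$ occur, so that for each $\sigma$ only finitely many protected witnesses are ever consumed. Proving this bound, while simultaneously respecting the splitting identities of the $H_\sigma$-tree and the $A$-permitting constraints, is in my view where the bulk of the technical work will lie; it is the low$_2$ analogue of the combinatorial heart of Lachlan's maximal-superset theorem and, as is typical for low$_2$ arguments, substantially more delicate than in the low case.
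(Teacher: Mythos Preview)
Your high-level framework matches the paper's: build a binary tree of sets $Z(\rho)$ above~$H$ generating the atomless Boolean algebra in $\+L^*(H)$, and for each~$e$ run a $\Delta^0_3$-guided strategy that decides $W_e$ on each piece $Z(\rho)$. Where your proposal diverges from the paper, and where there is a genuine gap, is in the identification of the main obstacle.

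You locate the difficulty in the tension between the $\+S_e$-actions and the infinitude of each $H_\sigma\setminus H$. In the paper that tension is handled essentially as in Lachlan's maximal-superset theorem: the decision nodes ask the $\Pi_2(A)$ (hence $\Delta^0_3$) question ``does $W_e$ receive unboundedly many $A$-true balls on the $\rho$-stream?''\ and act accordingly, with the same pull/eliminate machinery as in the maximal case. That part needs no new idea beyond bookkeeping for the $2^e$ labels.

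The real obstacle---and the paper names it explicitly as the main contribution---is \emph{splitting the stream itself}. Given balls flowing to a node~$\alpha$ with label~$\rho$, one must route them into $\rho\conc 0$ and $\rho\conc 1$ so that each side receives infinitely many elements of~$\overline{A}$. At the moment of routing one cannot tell which balls are genuinely outside~$A$; a naive alternating split may send all the $\overline{A}$-elements one way while~$A$ later swallows everything sent the other. Your remedy, ``dedicate for each~$\sigma$ an infinite reserve of protected witnesses inside~$\overline{A}$,'' does not say how such witnesses are recognised, and this is exactly the point at which the construction stalls without a new mechanism.

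The paper's solution is a certification procedure. A splitting node holds balls in blocks and defines an $A$-partial computable function $f^{\alpha,\rho}$ whose $k\tth$ value bounds the $k\tth$ block, with $A$-use bounding the first $2k$ balls in it. The $\lowtwo$ hypothesis supplies a single $\Delta^0_2$ function~$\vphi$ dominating every total $A$-computable function (this, not domination of $A'$-computable functions, is the relevant form). A block is released---and only then split between $\rho\conc 0$ and $\rho\conc 1$---once $\vphi_s(k)$ exceeds $f^{\alpha,\rho}_s(k)$. Because $\vphi(k)$ moves only finitely often, blocks cannot be released infinitely often with incorrect $A$-use; one then argues $f^{\alpha,\rho}$ is total, so eventually each $k\tth$ block is released $A$-correctly with $2k$ genuine $\overline{A}$-balls, $k$ to each side. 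Implementing this creates timing problems (a released $(k{+}1)$-block may already sit far below when the $k$-block's use is injured) which force the paper to give each splitting node infinitely many children and to replace ``$A$-true stages'' by the weaker notion of $A$-true \emph{balls}. None of this apparatus appears in your plan, and without it the splitting requirements cannot be met.
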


We now  try to explain why this is an important test case for the full 
low$_2$ conjecture. 
By necessity, this explanation is somewhat technical as it 
revolves around issues of $\Delta_3$ guessing and 
arguments akin to those of 
the deep paper \cite{HarringtonSoare96} by Harrington and Soare. 

The reader might recall  Friedberg’s \cite{F58}
construction of a maximal set $M$. For each~$e$, we need to ensure that if 
$W_e\supseteq M$, then either $W_e=^* M$ or $W_e=^*\Nat$, whilst still keeping~$M$ coinfinite.  At each stage $s$ we will have listed in order the elements of the complement $M^\complement_s=\{b_{i,s}\mid i\in \omega\}$. 
To make ${M}$ coinfinite, we make sure that 
for each $e$, $\lim_s b_{e,s}=b_e$ exists.
For the maximality requirement, the idea is to make, for all
$n\ge e$, the $n\tth$ member of the complement of $M$ a member of $W_e$, as follows. Assuming that $b_{e,s},\dots,b_{n-1,s}\in W_{e,s}$ but $b_{n,s}\not \in W_{e,s-1}$,  if we see some $x=b_{m,s}\in W_{e,s}$ for $m\ge n$, we make $x=b_{n,s+1}$ by  enumerating the 
 elements $b_{n,s},\dots,b_{m-1,s}$ into $M_{s+1}$, causing $b_{n,s+1}\in W_{e}.$ If this happens for each $n\ge e$ then $W_e\cup M=^* {\mathbb N}.$ On the other hand, if we fail
to keep finding such $b_{m,s}$, from some point $n$ onwards,
then $W_e\cup M=^* M$.

Of course, we have to worry about differing requirements 
putting things into $M$ since $W_e$ might want to make $b_{n,s}\in W_e$, and some 
other $W_{e'}$ might want to put this into $M$ to cause $b_{n,t}\in W_{e'}$,
so we need some way to reconcile such 
conflicting requests. This is done using Friedberg's priority method. In modern terminology, if $e'>e$, then the requirement dealing with $W_{e'}$ will guess the eventual behaviour of $W_e$, and therefore be able to align with it: either wait for elements of $M^\complement$ to enter~$W_e$, before attmepting to find elements of the complement in $W_{e'}$; or only start acting at a stage after which no more elements of $M^\complement$ enter~$W_e$. To implement this, Friedberg’s brilliant
idea was using \emph{$e$-states}.\footnote{Thus, in effect, the $e$-state method is a forerunner of infinite injury arguments.} The $e$-state of $b$ at stage~$s$ is the binary string that records the indices $d\le e$ such that $b\in W_{d,s}$. 
Friedberg's idea is to put, for each $n\ge e$, $b_{n,s}$ into the 
lexicographically \emph{maximized} $e$-state.
This means that almost all of the complement of~$M$ will be in the same 
$e$-state, and hence be in~$W_e$ or out of~$W_e$. 
In our constructions below, we use the modern tree-of-strategies terminology, rather than explictly using $e$-states. However, the notion of $e$-states appears necessary when constructing isomorphisms of lattices of c.e.\ sets, and so will be useful for the current dicussion. 

Suppose that we try to emulate Friedberg’s method to construct 
a maximal $M\supset A$, where the opponent is controlling the c.e.\ set~$A$. 
It is within the opponent's power to take elements from the complement ${M}^\complement_s$ such as $b_{i,s}$, and declare that they are in $M$, since they are in $A_{s+1}$, and 
$M\supseteq A$. The danger in implementing the strategy above for dealing with~$W_e$ is that we could believe that we see infinitely many elements of $M^\complement$ in $W_e$, and keep dumping elements not yet seen in $W_e$ into~$M$, but then the elements of~$W_e$ go into~$A$. This would result in~$M$ being cofinite. We should think of the elements of~$A$ as ``phantom elements'', ones that shouldn't really be considered, except that we cannot know this in advance. The way around this is for the requirement to \emph{guess} whether there are infinitely many numbers that are in~$W_e$ and not in~$A$, i.e., if $W_e\setminus A$ is infinite. This statement is $\Pi_2(A)$; since~$A $ is $\lowtwo$, this means that this statement is~$\Delta_3$. 

Lachlan’s proof in \cite{Lachlan2} (see also \cite[Theorem XI.5.1]{Soa}\footnote{This is the notorious ``blondes and brunettes'' construction in 
Soare.}) is a reasonably delicate construction that uses a version of $\Delta_3$ guessing. 
Lachlan's original proof seemed {ad hoc} and combinatorial.
As part of our paper we give a new thematic proof of Lachlan's 
Theorem based around 
$\Delta_3$ guessing
on an $\omega$-branching priority tree. As both the ``yes'' and ``no'' answer to the question whether $W_e\setminus A$ is infinite are $\Sigma_3$ (as $\Delta_3 = \Sigma_3\cap \Pi_3$), the outcomes are pairs $(\text{``yes''},n)$ and $(\text{``no''},n)$ where $n$ is the witness for the~$\Sigma_3$ predicate holding. When considering more than one requirement, we are guessing the answers for a Boolean combination of $\Pi_2(A)$ questions; but these too will be $\Delta_3$, so will be subject to the same guessing procedure. 
$\Delta_3$ priority arguments are much less common in computability theory
than the usual infinite injury arguments (such as the Thickness Lemma, 
or the Minimal Pair argument) which can be performed on finitely 
branching trees. Harrington and Soare \cite{HarringtonSoare96}, 
Downey and Greenberg \cite{DGbook}, Shore and Slaman \cite{ShSl1,ShSl2}
are some examples of $\Delta_3$ arguments.

The method of $\Delta_3$ guessing does not appear to be sufficient for results stronger than the construction of a maximal superset. Suppose that we are trying to show that $\+L^*(A)\cong \+E^*$. For each~$e$ and each~$e$-state~$\s$, we need to ensure that infinitely many elements of~$\Nat$ have $e$-state~$\s$ (with respect to the list of c.e.\ subsets of~$\Nat$) if and only if infinitely many elements of~$A^\complement$ have $e$-state~$\s$, with respect to a listing of the c.e.\ supersets of~$A$. The $e$-states correspond to measures
of intersections and non-intersections of c.e.\ sets. 

The method of $\Delta_3$ guessing will allow us to correctly guess which $e$-states we should try to fill. But now we have the more complicated task of actually finding potential elements and enumerating them into the correct sets, so that we can match $e$-states as required. The difference between the maximal set construction and the more general construction is that in the former, for each~$e$, there will be exactly one $e$-state that we need to populate. So there, the only difficulty is in guessing which $e$-state will be populated; once this is decided, all elements of the complement of~$M$ will be enumerated into the same sets (as usual, except for finitely many). If there are more than one $e$-state to populate, say $\s$ and~$\tau$, then given an element $b$ that at a stage~$s$ seems to be in the complement of the set we are building, we need to decide whether to enumerate~$b$ into sets so as to make the $e$-state of~$b$ either~$\s$ or~$\tau$. We need to ensure that both~$\s$ and~$\tau$ will be populated by infinitely many ``true'' elements, elements that are not in~$A$. The question is whether given such~$b$ at stage~$s$, should we believe
that this is the true situation, and, moreover, what do we do 
if we act on false beliefs?

The key to all proofs we mentioned (Soare, Maass, Cholak, etc)
 where $\+L^*(A)$ is shown to be 
isomorphic to ${\mathcal E}^*$ 
is some kind of guessing procedure to understand when elements seem to 
be truly outside~$A$. If $A$ is low, semilow, or even semilow$_{1.5}$ we have a ``pointwise'' 
testing process where we can guess whether individual elements are in~$A^\complement$, and get the answer \emph{more-or-less immediately}. For example, suppose that
 $A$ is low.
Then using,  for example,
the Robinson trick, we can ask $A'$, and hence $\emptyset'$, whether some 
$z\in {A}^\complement_s$ (in some desirable $e$-state) is actually outside~$A$, and 
must eventually get a true ``yes'' if there are infinitely many such 
potential~$z$. In his proof that semilow$_{1.5}$ c.e. sets have lattices of supersets effectively automorphic with ${\mathcal E}^*$ (\cite{ma}),  Maass points out that indeed, it is not enough to know that there are infinitely many elements in some state, but we also need to capture them. Maass's outer splitting property, mentioned above, is an elaboration on the low guessing technique, that works if~$A$ is semilow$_{1.5}$. 

The key problem appears to be that of \emph{splitting} $e$-states. For some $e$-state~$\s$, we have somehow guaranteed that infinitely many true elements (elements outside~$A$) have $e$-state~$\s$. Suppose that~$\Delta_3$ guessing tells us that both $\s\conc 0$ and $\s\conc 1$ are $(e+1)$-states that need ``filling''. In the construction, we are given an infnite set~$E$ of elements in state~$\s$, and we know that infinitely many of these are outside~$A$. The problem is to split~$E$ into two sets~$E_0$ and~$E_1$, while ensuring that both $E_0\setminus A$ and $E_1\setminus A$ are infinite. We would then enumerate the elements of~$E_1$ into the $(e+1)\tth$ c.e.\ superset of~$A$, making them have $(e+1)$-state $\s\conc 1$, while keeping the elements of~$E_0$ outside that set, making them have $(e+1)$-state $\s\conc 0$. This is precisely what is required in the construction of an atomless hyperhypersimple superset. As Cholak pointed out,\footnote{As did Maass in unpublished handwritten notes from a seminar.} the dynamics of Lachlan’s construction do not seem to be modifiable to obtain this. 

In our main construction in \cref{sec:atomless_supersets}, we introduce a novel method for performing this splitting. This method does not rely solely on $\Delta_3$ guessing; we also use domaination properties of $\lowtwo$ sets, namely, the fact that if $A$ is $\lowtwo$, then there is a $\emptyset'$-computable function that dominates all functions computable from~$A$. 

To end this discussion, we should mention why our new technique does not seem to immediately solve the original problem of showing that $\+L^*(A)\cong \+E^*$. The issue is very delicate, and as is often the case, relies on discrepancies in timing. When constructing an atomless hyperhypersimple superset~$H$, we are, in some sense, controlling the supersets of~$H$, which means that all $e$-states have equal status (this is by necessity an imprecise simplification). When constructing an isomorphism, the opponent has within their power to shift elements from one~$e$ state to another. There is a difference between what are colloquially known as ``low'' and ``high'' $e$-states. The opponent can move elements from low to high $e$-states. Unlike the low guessing construction, or the construction relying on the outer splitting property, our splitting method is not immediate: we have to wait an unknown amount of time until we get a certification that certain elements are outside~$A$, and for finitely many elements, such certification may never happen. While we are waiting for certification, the opponent will shift elements outside a desirable low state. With our new splitting method, we are thus able to split high states, but not low states. In the hyperhypersimple construction, all states are high. 

\smallskip

In addition to Lachlan's characterisation of hyperhypersimple sets, he also classified the isomorphism types of the resulting Boolean algebras: he showed that the Boolean algebras $\+L^*(A)$ for hyperhypersimple sets~$A$ are precisely the $\Sigma_3$-Boolean algebras --- quotients of the computable, atomless Boolean algebra by $\Sigma_3$ ideals. We show in this paper that \cref{thm:atomless} can be extended to obtain all such algebras as the lattice of supersets of a superset of any given $\lowtwo$ c.e.\ set.

\begin{theorem} Suppose that $A$ is a coinfinite, $\lowtwo$ c.e.\ set. Let 
$B$ be a $\Sigma_3$ boolean algebra. Then there is a 
c.e.\ superset $H\supseteq A$ with $\+L^*(H)\cong B$.\end{theorem}

\section{Maximal supersets} 
\label{sec:maximal_supersets}

As mentioned above, in order to present our proof of \cref{thm:atomless}, it would be useful to first give a ``modern'' or at least 
``thematic'' proof of Lachlan's theorem, using $\Delta_3$ guessing 
on a priority tree:

\begin{theorem}[Lachlan \cite{Lachlan2}] \label{thm:Lachlan_maximal}
	Every $\lowtwo\!$ coinfinite c.e.\ set has a maximal superset.
\end{theorem}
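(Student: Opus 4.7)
The plan is to give a tree-of-strategies construction of a maximal c.e.\ set $M\supseteq A$, driven by $\Delta^0_3$ guesses made available by the $\lowtwo$ hypothesis on $A$. Recall that for $x \in \overline A$ and $e \in \w$, the \emph{$e$-state} of $x$ is $\sigma_e(x) = \{i < e : x \in W_i\}$; the maximality requirement $R_e$ asks that $W_e \cap \overline M$ is either finite or cofinite in $\overline M$, which, provided $M$ is coinfinite, is equivalent to saying that almost all elements of $\overline M$ share a common final $e$-state.

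The crucial computability-theoretic input is that ``there are infinitely many $x \in \overline A$ whose final $e$-state equals $\tau$'' is $\Pi^0_2(A)$, so by the $\lowtwo$ hypothesis this predicate is uniformly $\emptyset''$-computable and therefore admits a $\emptyset'$-computable approximation whose liminf value is correct. The \emph{true path} $f \in \w^\w$ is then built level by level: at level $e$, $f(e)$ encodes the lexicographically greatest $e$-state $\tau$ that is both realised by infinitely many elements of $\overline A$ and consistent with the guesses made by $f\restriction e$ at lower levels.

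The construction places a strategy for $R_e$ at every node $\alpha$ of length $e$ on $\w^{<\w}$. Node $\alpha$ guesses a state $\tau_\alpha$ and, at stages at which it is accessible, maintains a dynamic list of \emph{followers} -- elements of $\overline A_s$ whose current $e$-state extends $\tau_\alpha$. When $\alpha$ observes a new candidate of apparently higher $e$-state, it dumps its lower-state followers into $M$ and promotes the candidate. Along $f$, the definition of $\tau_{f\restriction e}$ ensures that after some stage no element of $\overline A$ displays an $e$-state strictly above $\tau_{f\restriction e}$, so $\alpha = f\restriction(e+1)$ eventually stops dumping and $R_e$ is satisfied; coinfiniteness of $M$ is preserved because $\tau_{f\restriction e}$ is realised by infinitely many $x \in \overline A$, and $\alpha$ retains all but finitely many of them as permanent followers.

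The main obstacle, as the paper flags, is the synchronisation between true-path information and the stage-by-stage moves. At the moment strategy $\alpha$ sees an apparent improvement and is tempted to dump its current followers, it may be that either (i) the apparent improvement is a short-lived artefact of the $\emptyset'$-approximation, or (ii) the new candidate $x$ itself will later enter $A$, wasting the dumped followers. The remedy is to perform irrevocable dumps only at stages that are simultaneously $\alpha$-accessible on the tree \emph{and} at which the $\emptyset'$-guess has settled sufficiently, and then to verify that along the true path such stages occur often enough to force the correct final $e$-state. Arranging the bookkeeping so that each node below the true path discards only finitely many followers is the delicate heart of the argument, and is precisely the platform that must be substantially extended to handle atomless hyperhypersimple supersets.
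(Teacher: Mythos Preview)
Your framework differs from the paper's: you use classical $e$-states, with each node guessing the lex-maximal state realised infinitely often in $\overline A$, whereas the paper handles one $W_e$ at a time --- a node $\alpha$ of length~$e$ asks only whether $\Y{\alpha}\cap W_e$ is large at infinitely many $A$-true stages, with outcomes $\infty_n/\fin_n$. Either organisation can in principle support a maximal-superset construction; the paper's choice is made with the later hyperhypersimple generalisation in mind, where each $W_e$ must be decided separately on each piece $Z(\rho)$.

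There is, however, a genuine gap in your proposal: you have not located where the $\lowtwo$ hypothesis does essential work. The predicate ``infinitely many $x\in\overline A$ have final $e$-state~$\tau$'' is already plain $\Pi^0_2$, not merely $\Pi^0_2(A)$, since $A$ is c.e.\ and membership in $W_i$ is $\Sigma^0_1$; guessing it on a tree therefore needs no hypothesis on~$A$ whatsoever, and by Shoenfield's theorem that cannot suffice. In the paper, $\lowtwo$ enters through a \emph{true-stage enumeration} of~$A$: a computable enumeration $\smallseq{A_s}$ for which, at infinitely many stages~$s$, the first $f(s)$ elements of $\overline{A_s}$ are genuinely in~$\overline A$. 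The node questions $\psi(\alpha)$ are then phrased in terms of these $A$-true stages --- making them genuinely $\Pi^0_2(A)$ and hence requiring $\lowtwo$ for the $\Delta^0_3$ guessing --- and, crucially, the verification that $M$ is coinfinite (\cref{lem:maximal:permanent_residents_on_true_path}) rests on knowing that at true stages all balls on the machine are real.

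Your proposed remedy for synchronisation --- dump only at $\alpha$-accessible stages where the $\emptyset'$-approximation has ``settled sufficiently'' --- does not touch this issue. Settling of the $\Delta^0_3$ guess tells you nothing about whether a particular candidate $x$ lies in~$\overline A$; without a true-stage mechanism (or an equivalent device such as the $\Delta^0_2$ dominating function used in \cref{sec:atomless_supersets}), nothing prevents every follower you ever appoint from later entering~$A$, and the coinfiniteness of~$M$ is unproved.
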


\subsection{Discussion}

We are given a coinfinite, $\lowtwo$ c.e.\ set~$A$; we enumerate a maximal c.e.\ set $M\supseteq A$.

This proof follows, to a certain extent, the $\Delta_3$ automorphism machinery of Harrington and Soare \cite{HarringtonSoare96}. The construction is performed on a tree of strategies, similarly to many infinite injury priority arguments. Nodes on the tree represent guesses about the eventual behaviour of some aspects of the construction, and use their guesses to meet a requirement that they are assigned to. In the current construction, nodes of length $e+1$ make guesses about how the $e\tth$ c.e.\ set~$W_e$ interacts with the construction, and attempt to meet the $e\tth$ maximality requirement: either $W_e\cup M =^*\Nat$, or $W_e\cup M=^* M$. We will identify a \emph{true path}, the path of nodes whose guesses are correct; nodes on the true path will be successful in meeting their requirements. 

In some ways, though, the usage of the tree of strategies is quite different to most constructions. For one, at a stage~$s$ of the construction, we do not define a path of ``accessible'' nodes, those whose guesses appear to be correct at that stage. Further, there will not be much explicit interaction between strategies: they will not impose restraint, or cause initialisation of other strategies. We will not use the terminology of relative priority.  

More importantly, we use the tree as the hardware of a \emph{pinball machine}. We view the priority tree as growing downwards; so the root of the tree is the ``top'' node. During the construction, we place some \emph{balls} at the root of the tree. These balls represent numbers that at a stage~$s$ of the construction are not in~$A_s$ (the stage~$s$ approximation to the given c.e.\ set~$A$). At any given stage, there will be only finitely many balls on the machine, but we will ensure that every element of the complement $A^\complement$ of~$A$ is placed on the machine at some stage. Once placed at the root, we allow balls to move between nodes on the tree. Some balls may be enumerated into~$M$, at which time they are removed from the machine. This includes all the elements of~$A$, since $A\subseteq M$. We will ensure that balls that are never enumerated into~$M$ only move during finitely many stages of the construction, so they eventually arrive at a permanent ``resting place''. The main mechanics of the construction are the determination of which balls move, where they move to, and which balls are removed from the machine. 

We remark that the term ``pinball machine'' was earlier used by Lerman, to organise priority arguments (often used for embedding lattices into the c.e.\ degrees). The Harrington-Soare pinball machine, that we use here, is different in that the balls move from the root to other nodes, which is an opposite direction to Lerman's machines. 

As mentioned in the introduction, another aspect of the Harrington-Soare machinery is the employment of $\Delta_3$-guessing. This is the main way that we use, in this construction, the assumption that~$A$ is $\lowtwo$. In a typical $\Pi_2$ argument (such as the minimal pair construction), during the construction we guess the outcome of $\Pi_2$ questions based on what we see at each stage. In the current construction, we ask more complicated questions, namely, $\Pi_2(A)$ questions, and we use the fact that $\Pi_2(A)\subseteq \Delta_3$. Rather than observing the construction and making guesses accordingly, we are given an approximation to the answer to a $\Delta_3$ statement, that may or may not be aligned with what is measured at a given stage; we just know that in the limit, the approximation will give us the correct answer. As we shall detail below, we will rely on the recursion theorem to ensure that we indeed approximate the correct answers. The guessing process for $\Delta_3$ facts is more complicated than that of $\Pi_2$ facts. Namely, to guess membership in a $\Delta_3$ set~$S$, we need to guess an answer ($n\in S$ or $n\notin S$), and to guess an existential witness for the $\Sigma_3$ predicate being guessed. This implies that the tree of strategies needs to be infinite-branching.

\subsubsection{The requirements}

Let us consider now the goals of the construction. As discussed, the balls that are never removed from the machine will be precisely the elements of $M^\complement$. Each such ball will eventually settle as a ``resident'' of some node~$\beta$ on the tree. For a node~$\alpha$ on the tree, we will let $\Y{\alpha}$ denote the collection of balls which are permanent residents of nodes $\beta\succeq \alpha$. The rules about ball movement will ensure that $\Y{\alpha}$ is a d.c.e.\ set. To enter~$\Y{\alpha}$, a ball~$x$ will need to first pass through~$\alpha$, i.e., be a resident of~$\alpha$ at some stage. It may later move to extensions of~$\alpha$. It is possible that it will be removed from $\Y{\alpha}$ by being \emph{pulled} by some node, that lies to the left of~$\alpha$ on the tree. In any case, balls can be removed from $\Y{\alpha}$ by enumerating them into~$M$. We will ensure, however, that $\Y{\alpha}\cup M$ is in fact a c.e.\ set, though not uniformly in~$\alpha$. This is because either only finitely many balls outside~$M$ will ever enter $\Y{\alpha}$, or finitely many balls outside~$M$ ever leave $\Y{\alpha}$, or $\Y{\alpha}= \emptyset$. 

We will ensure the following:

\begin{sublemma}
  \item For each node $\beta$ on the true path, $\Y{\beta}=^* \outside{M}$. That is, all but finitely many elements of $\outside{M}$ will have passed through~$\beta$ at some stage and settled at~$\beta$ or some extension of~$\beta$, i.e., below~$\beta$ (recall that our trees grow downwards). 

  \item For each~$e$, if $\beta$ is the node on the true path of length $e+1$, then $\Y{\beta}$ is either almost contained in, or almost disjoint from, $W_e$. 

  \item We also need to ensure that $M$ is coinfinite. 
\end{sublemma}

For~(a), we will ensure that only finitely many balls have permanent residence to the left of~$\beta$, to the right of~$\beta$, or above~$\beta$. 

For~(b), we will ensure that either all balls that ever pass through~$\beta$ are already seen to be in~$W_e$ when they do, or that we know that only finitely many elements of~$W_e$ will ever be available to pass through~$\beta$. 
In the first case, the node~$\beta$ will ensure that all but finitely many balls outside~$W_e$ will be enumerated into~$M$ (we will say that they are \emph{eliminated} from the machine). 

For~(c), we will ensure that infinitely many nodes~$\beta$ on the true path hold on to balls and ensure that they are not eliminated, i.e., keep them out of~$M$. Similar action will ensure that balls indeed only move finitely much, i.e., they do eventually settle at some node.

\subsection{Setup}

We now go into the details.

\subsubsection{The tree} 

We let the tree of strategies be the collection of all sequences of the symbols $\fin_n$ and~$\infty_n$. These symbols are called \emph{outcomes}.   
We write $\alpha\preceq \beta$ to indicate that $\alpha$ is a prefix of~$\beta$. 

In addition to extension, we will use the \emph{Kleene-Brouwer} ordering on the tree. To define this we fix an ordering of the outcomes,  say 
\[
  \infty_0 < \fin_0 < \infty_1 < \fin_1 < \cdots.
\]  
We say that $\alpha$ lies to the left of~$\beta$, and write $\Left{\alpha}{\beta}$, if $\alpha$ and~$\beta$ are incomparable (neither is a prefix of the other), and $\alpha(k)< \beta(k)$ for the least~$k$ such that $\alpha(k)\ne \beta(k)$. We write $\alpha\le \beta$ if either $\Left{\alpha}{\beta}$ or $\beta\preceq \alpha$. As this is the only linear ordering of strategies that we will use, we use the notation $\alpha\le \beta$ rather than $\alpha\le_{\textup{KB}} \beta$. 

We let $\emptystring$ denote the root of the tree (the empty sequence).

\subsubsection{Notation for the pinball machine} 

As discussed, at each stage, finitely many \emph{balls} $x$ will reside at some nodes of the machine. We let
\[
	 \SSS{\alpha}{s}
\]
denote the collection of~$x$ that reside at the node~$\alpha$ at the \emph{beginning} of stage~$s$. 
We let $\Y{\alpha}{s}$ be the collection of all balls that at the beginning of stage~$s$ reside at~$\alpha$ or below~$\alpha$, i.e., at some extension of~$\alpha$:
\[
	\Y{\alpha}{s} =  \bigcup_{\beta\succeq \alpha}  \SSS{\beta}{s}.
\]
So $\Y{\emptystring}{s}$ is the collection of balls that are on the machine at the beginning of stage~$s$.

\subsubsection{True stage, and delayed, enumerations}

Since the given set~$A$ is $\lowtwo$, it is not high. C.e.\ sets that are not high have ``true stage'' enumerations with respect to any given computable growth rate: for any computable function~$f$, there is a computable enumeration $(A_r)$\footnote{Here the notation $(A_r)$ refers to the infinite sequence of finite sets $A_0,A_1,\dots$ that form the enumeration of~$A$.} of~$A$  such that for infinitely many stages~$r$, the first $f(r)$ many elements of the stage~$r$ complement $\outside{A}_r$ are ``correct'': they are elements of the complement $\outside{A}$. See \cite[Lem.\:XI.1.6]{Soa}.\footnote{Note that the term ``true stages'' is sometimes used to refer to the Dekker ``nondeficiency'' stages; we will not use these.}

We fix an enumeration $(\tilde{A}_r)$ of~$A$, that is true with respect to the function $f(r) = r^2$. During the construction, though, it will be useful to \emph{delay} this enumeration. We will define, during the construction, an enumeration $(A_s)$ of~$A$, such that for all~$s$ there is some~$r$ such that $A_s = \tilde A_r$. We will write $n(s)=r$. 

Let~$s$ be a stage, and suppose that $n(s)=r$. We will let the construction run: balls will move around, or be enumerated into~$M$, and it will be useful to keep track of these actions as happening during several stages: $s$, $s+1$, $s+2$, and so on. While we are doing this, we keep our enumeration of~$A$ fixed: that is, $\tilde A_r = A_s = A_{s+1} = A_{s+2} = \cdots$. After finitely many stages, we will reach a stage $t\ge s$ at which we decide that there is nothing we want to do. We then declare~$t$ to be a ``new balls'' stage, and set $A_{t+1} = \tilde{A}_{r+1}$,\footnote{Recall that the index~$s$ denotes the objects as they are at the \emph{beginning} of stage~$s$. During stage~$s$ we perform some actions, resulting in the objects indexed by~$s+1$. Thus, if during stage~$t$ we place new balls on the machine, these balls will be elements of $\Y{\emptystring}{t+1}$, not of $\Y{\emptystring}{t}$.} i.e., set $n(t+1) = r+1$. 

Thus, the function $s\mapsto n(s)$ will be weakly increasing and onto; we will have $n(s+1)\ne n(s)$ exactly when $s$ is a new balls stage, in which case $n(s+1) = n(s)+1$. What is important is that the enumeration $(\tilde A)_r$ is given to us, whereas the enumeration $(A_s)$ is defined by us during the construction: based on how the construction develops, we decide whether to declare a ``new balls'' stage, i.e., when to increase $n$ by~1 and thus enumerate more numbers into~$A$. We start with $n(0)=0$. Observe that for any stage~$s$, $n(s)$ is the number of new balls stages $t<s$. We will ensure that there are infinitely many new balls stages, so that indeed $\bigcup_s A_s = A$. 

\begin{definition} \label{def:Q_s}
 We let 
\[
  Q_s = \{\text{the smallest } n(s)^2 \text{ many elements of }\outside{A}_s\}.
\] 
A stage~$s$ is \emph{$A$-true} if $Q_s \subseteq \outside{A}$.
\end{definition}

Thus, if $s<t$ are successive new balls stages, then $Q_{s+1} = Q_{s+2} = \cdots = Q_t$, and $s+1$ is $A$-true if and only if all the stages $s+1,s+2,\dots, t$ are $A$-true. A stage~$s$ is $A$-true if and only if the stage $r = n(s)$ is true for the enumeration $(\tilde A_r)$ with respect to $r\mapsto r^2$. By assumption, there are infinitely many such~$r$. As we will ensure that $s\mapsto n(s)$ is onto, there will be infinitely many $A$-true stages~$s$. 

We will ensure that at every stage~$s$, $\Y{\emptystring}{s}\setminus M_s = Q_s$. So at a new balls stage~$s$, we will place all the elements of~$Q_{s+1}$ that have not yet been put on the machine, at the root. As stages go by, some of these elements may be enumerated into~$M$. 

The reason for using true stages for the function $f(r) = r^2$ is the following. If~$s$ is a new balls stage then 
 \[
 |Q_{s+1}|  = |Q_s| + 2n(s) +1 > |Q_s| + n(s). 
 \]
Thus, at a new balls stage~$s$ we will be adding more than $n(s)$-many new balls to the root of the machine; and we will use the fact that $n(s)\to \infty$. As time goes by, we will be getting balls at the root in increasing number, and infinitely often, all of these balls will be $A$-correct.

\subsubsection{$\Delta_3$ guessing}  

By assumption, $A$ is $\lowtwo$, i.e., $\Pi_2(A)\subseteq \Delta_3$. Thus, any Boolean combination of $\Pi_2(A)$ statements is equivalent to a $\Sigma_3$ statement. Since there is a universal $\Pi_2(A)$ set, this is effective: given~$\psi$, a Boolean combination of $\Pi_2(A)$ statements, we can effectively find a $\Sigma_3$ statement $\chi$ such that $\psi$ holds if and only if $\chi$ holds.

In turn, from $\chi$ we can effectively produce a uniformly computable collection of nondecreasing sequences $\seqell{\psi,n} = {(\oneell{{\psi,n}}{s})}$ for $n\in \Nat$ such that:
\begin{itemize}
	\item $\psi$ holds if and only if for some~$n$, $\seqell{\psi,n}$ is unbounded.
\end{itemize}
To see this, write $\chi$ as $\exists n\forall x\exists y\,\theta(n,x,y)$. Then we let $\oneell{\psi,n}{s}$ be the greatest $x<s$ such that for all $x'\le x$, there is some $y<s$ such that $\theta(n,x',y)$ holds. 

Shortly, for each node~$\alpha$ on the tree of strategies, we will formulate an ``$\alpha$ question'' $\psi(\alpha)$, which will be a $\Pi_2(A)$ statement. In other words, the collection of nodes~$\alpha$ such that $\psi(\alpha)$ holds is $\Pi_2(A)$. Using the transformation above for the statements $\psi(\alpha)$ and $\lnot\psi(\alpha)$ (both are Boolean combinations of $\Pi_2(A)$ statements), we obtain families of sequences $\seqell{\psi(\alpha),n}$ and $\seqell{\lnot\psi(\alpha),n}$. By the recursion theorem, we know a computable index for the construction, and so can have access to these families of sequences during the construction.\footnote{For the reader who may less comfortable with such a use of the recursion theorem, we provide a few details. For each~$e$, we can view the $e\tth$ partial computable function $\vphi_e$ as the function that tells us what the construction is: for each~$s$, we interpret $\vphi_e(s)$ as the code of the state of the construction at stage~$s$. For each~$e$ we can calculate a $\Pi_2(A)$ index for the set of~$\alpha$ for which $\psi(\alpha)$ holds according to the construction described by~$\vphi_e$, and using this, a computable index for all the resulting sequences $\seqell{\psi(\alpha),n}$ and $\seqell{\lnot\psi(\alpha),n}$; and given that, an index $f(e)$ such that $\vphi_{f(e)}$ describes the construction which is performed using these sequences. A fixed point $\vphi_e = \vphi_{f(e)}$ is the construction that we actually perform. Note that regardless if $\vphi_e$ is partial or not, $\vphi_{f(e)}$ is always total, and so the fixed point is total.}

\subsubsection{The $\alpha$ question}

\begin{definition} \label{def:maximal:alpha_question}
  Let $\alpha$ be a node. The statement $\psi(\alpha)$ is:
\begin{quote}
  For every $k\in \Nat$ there is an $A$-true stage~$s$ for which 
   \[
   | \Y{\alpha}{s}\cap W_{|\alpha|,s}|\ge k.
   \]
\end{quote}
\end{definition}

As just discussed, during the construction we have access to the sequences $\seqell{\psi(\alpha),n}$ and $\seqell{\lnot\psi(\alpha),n}$. Now for each node~$\beta$ we define a (single) sequence $\seqell{\beta}$ by induction on the length $|\beta|$: 
\begin{itemize}
  \item $\oneell{\emptystring}{s}= s$; 
  \item $\oneell{\beta\conc\infty_n}{s} = \min \{ \oneell{\beta}{s}, \oneell{\psi(\beta),n}{s}  \}$;
  \item $\oneell{\beta\conc\fin_n}{s} = \min \{ \oneell{\beta}{s}, \oneell{\lnot\psi(\beta),n}{s}  \}$.
\end{itemize}

Thus, via the sequences $\seqell{\gamma}$, the children~$\gamma$ of~$\beta$ together try to answer the~$\beta$ question. 

\begin{lemma} \label{lem:maximal:guessing_sequences_basics}
  For all~$\alpha$:
  \begin{sublemma}
  \item For all~$s$, $\oneell{\alpha}{s}\le s$; 
  \item If $\alpha\preceq \beta$ then $\oneell{\alpha}{s}\ge \oneell{\beta}{s}$. 
  \item If $\seqell{\alpha}$ is unbounded then there is some child~$\beta$ of~$\alpha$ such that $\seqell{\beta}$ is unbounded.     
  \end{sublemma}  
\end{lemma}

\begin{proof}
  Mostly immediate; (c) holds because for every~$\alpha$, one of $\psi(\alpha)$ and $\lnot\psi(\alpha)$ holds. 
\end{proof}

Also, by definition, $\seqell{\emptystring}$ is unbounded. We can therefore define:

\begin{definition} \label{def:maximal:true_path}
  The \emph{true path} is the path of nodes~$\alpha$ such that $\seqell{\alpha}$ is unbounded, but for every $\Left{\beta}{\alpha}$, $\seqell{\beta}$ is bounded. 
\end{definition}

In other words, $\emptystring$ is on the true path, and if $\alpha$ is on the true path, then the child of~$\alpha$ on the true path is the leftmost child~$\beta$ for which $\seqell{\beta}$ is unbounded. This is the leftmost child that guesses a correct witness for the $\Sigma_3$ version of $\psi(\alpha)$ or its negation. \Cref{lem:maximal:guessing_sequences_basics}(c) implies that the true path is infinite.

\subsubsection{Pulling and eliminating} 

We will now discuss the heart of the construction: how to decide where balls move, and which are enumerated into~$M$. The following terminology will be useful. 

\begin{itemize}
  \item A \emph{$\fin$-node} is a nonzero node~$\beta$ whose last entry is $\fin_n$ for some~$n$ (that is, $\beta =  (\beta^-)\conc \fin_n$ for some~$n$); and similarly, 
  \item An \emph{$\infty$-node} is a nonzero node whose last entry is $\infty_n$ for some~$n$. 
\end{itemize}

Here note that for a nonzero node~$\alpha$, we let $\alpha^-$ denote $\alpha$'s parent, the result of removing the last entry of~$\alpha$. 

\smallskip

The next definition will govern the movement and enumeration of balls. We will explain the details after we give the definition. The rough idea, though, is the following. If $\beta$ is an $\infty$-node of length $e+1$, then we will ensure that $\Y{\beta}\subseteq W_e$ by only allowing elements that have already entered~$W_e$ to enter $\Y{\beta}$. On the other hand, if $\beta$ is a $\fin$-node, then we get $\Y{\beta}\cap W_e =^* \emptyset$ ``for free'', by the fact that the $\beta^-$ question $\psi(\beta^-)$ fails. As discussed, in the first case, an $\infty$-node $\beta$ will want to enumerate into~$M$ any balls it sees that are outside $W_{e,s}$. If~$\beta$ is correct about its guess, then this will not cause $\Nat\subseteq^* M$, because $\psi(\beta^-)$ holds: sufficiently many balls will not be enumerated into~$M$. The complexity of the definitions comes from the need to deal with nodes to the left of the true path, that have wrong opinions. 

For any node~$\beta$, let 
 \[
 \Y{<_{L}\!\beta}{s} = \bigcup \,\,\{\Y{\gamma}{s}\,:\, \Left{\gamma}{\beta}\},
 \]
 and let 
  \[
  \Y{\le\! \beta}{s} = \Y{<_{L}\!\beta}{s}\,\cup \,\Y{\beta}{s} = \bigcup \,\,\{\Y{\gamma}{s}\,:\, {\gamma}\le {\beta}\}.
  \]
  Here recall that $\gamma\le \beta$ denotes the Kleene-Brouwer ordering, not the lexicographic ordering.

\begin{definition} \label{def:maximal:grabbability}
	A ball~$x$ is \emph{pullable} by a nonzero node~$\beta$ at stage~$s$ if:
	\begin{orderedlist}
	  	\item \label{item:maximal:grab:location} 
      $x\in \Y{\beta^-}{s}\setminus \Y{\le\!\beta}{s}$; 
	  	
      \item \label{item:maximal:grab:least} 
      $x > |\beta|$; 
      
      \item \label{item:maximal:grab:hungry} 
      $|\Y{\beta}{s}| < \oneell{\beta}{s}$; and
	  	
      \item \label{item:maximal:grab:W_e}  
      If $\beta$ is an $\infty$-node then $x\in W_{|\beta^-|,s}$. 
	  \end{orderedlist}  

  A ball~$x$ is \emph{eliminable} by a nonzero node~$\beta$ at stage~$s$ if:
  \begin{orderedlist}[resume]
      \item \label{item:maximal:eliminate:location} 
      $x\in \Y{\beta^-}{s}\setminus \Y{\le\! \beta}{s}$; 

      \item \label{item:maximal:eliminate:least} 
      $x \ne \min \Y{\beta^-}{s}$; and

      \item \label{item:maximal:eliminate:ell} 
      $x< \oneell{\beta}{s}$.      
    \end{orderedlist}    
\end{definition}

Let us explain. \ref{item:maximal:grab:location} and \ref{item:maximal:eliminate:location}  mean that~$x$ resides at $\beta$'s parent $\beta^-$, or at or below one of $\beta$'s siblings to the right of~$\beta$. These are the regions from which~$\beta$ is allowed to pull.  Thus, balls on the machine will only move downward in the Kleene-Brouwer ordering: either to the left, or drop from a parent to a child. 

\ref{item:maximal:grab:least} will help ensure that each ball outside~$M$ eventually stops moving: it cannot keep being pulled by longer and longer nodes. \ref{item:maximal:eliminate:least} will help show that~$M$ is coinfinite. A node~$\beta^-$ on the true path will guard one ball (the smallest that it can see), and ensure that it is not enumerated into~$M$. 

The fact that an $\infty$-node $\beta$ is only allowed to pull balls in $W_e$ is in conflict with the requirement that $\Y{\beta}=^* \outside{M}$. It is simple to reconcile the conflict: if at a stage~$s$, $x$ is in a region from which~$\beta$ can pull, and $x\notin W_{e,s}$, then $\beta$ can simply enumerate~$x$ into~$M$ and remove it from the machine. The danger, of course, is that numbers go into $W_e$ slowly, and we may be too hasty in enumerating them into~$M$, risking $M=^*\Nat$. To avoid this, we use the sequences $\seqell{\gamma}$, that will ensure that each node $\gamma$ to the left of the true path eventually stops enumerating into~$M$ numbers from $\outside{A}$. If $\beta$ is an $\infty$-node on the true path, we need to ensure that $\Y{\beta}$ is infinite, i.e., that it sees enough numbers in~$W_e$ sufficiently early. We will show that this follows from $\psi(\beta^-)$ being true. 

Thus, \ref{item:maximal:grab:hungry} and \ref{item:maximal:eliminate:ell} ensure that nodes to the left of the true path eventually stop acting: they stop eliminating or pulling balls $x\in \outside{A}$. (Balls $x\in A$ are ``phantom'' balls; the right way to think about them is as if they were never there, and so pulling them does not count as acting.) This uses the fact that if $\gamma$ lies to the left of the true path, then $\seqell{\gamma}$ is bounded. Of course, these restrictions also restrain the action of a node~$\beta$ on the true path. Here, we note the difference between \ref{item:maximal:grab:hungry} and \ref{item:maximal:eliminate:ell}, i.e., the difference between the conditions $x<\oneell{\beta}{s}$ and $|\Y{\beta}{s}|<\oneell{\beta}{s}$. The issue is of timing. 

The main step of the verification will be to ensure that if~$\beta$ lies on the true path, then the number of balls in $\Y{\beta}$ during $A$-true stages is unbounded (this is \cref{lem:maximal:main_true_path_lemma} below). Suppose that this is known for $\beta^-$ (either by induction, or directly from $\psi(\beta^-)$, if $\beta$ is an $\infty$-node). We need to ensure that if the opportunity arises, in a true stage, then $\beta$ will act by pulling balls. It could be that at such stages, $\oneell{\beta}{s}$ is too small, compared to the size of the balls that need pulling. This is why we do not require that $x< \oneell{\beta}{s}$ when deciding whether to pull~$x$ or not; we need to consider the number of balls we currently have, i.e., $|\Y{\beta}{s}|$. 

On the other hand, there is no reason for haste in eliminating balls; we just need to ensure that in the limit, all but finitely many ``deviant'' balls do end up in~$M$ (these are the balls outside $W_e$, when~$\beta$ ensures that $\outside{M}\subseteq^* W_e$). So it is legitimate for us to require the stronger condition $x< \oneell{\beta}{s}$ when considering elimination. And in fact, it is important that we do so. The reason is a little delicate. Suppose that $\alpha$, the node on the true path, is a $\fin$-node, but that $\gamma$ is an $\infty$-child of $\alpha^-$ that lies to the left of~$\alpha$. Since $\gamma$ is allowed to only pull elements of $W_e$ (again $e = |\alpha^-|$), and~$W_e$ may be small (indeed, may be empty), it may be the case that $\Y{\gamma}$ never reaches the size $\lim_s \oneell{\gamma}{s}$. Thus, $\gamma$ is always ``hungry'' for balls. If we allowed $\gamma$ to eliminate balls whenever $|\Y{\gamma}{s}| < \oneell{\gamma}{s}$, it may never stop doing so. For pulling balls, the weaker condition $|\Y{\gamma}{s}| < \oneell{\gamma}{s}$ is sufficient, since if~$\gamma$ keeps pulling balls $x\notin \outside{A}$, we will argue that eventually the size of $\Y{\gamma}$ reaches $\lim_s \oneell{\gamma}{s}$, and $\gamma$ will stop acting. 

\smallskip

We will need the following for the construction:

\begin{lemma} \label{lem:maximal:leftmost_pullable_node}
  If $x$ is a ball on the machine at some stage~$s$, and is pullable by some node~$\beta$ at stage~$s$, then there is a $\le$-least such~$\beta$.  
\end{lemma}

\begin{proof}
  There are only finitely many nodes~$\alpha$ such that $x\in \Y{\alpha}{s}$, namely those $\alpha\preceq \gamma$ where $x\in \SSS{\gamma}{s}$. The ball~$x$ is pullable only by children of such nodes~$\alpha$, and ones which are $\le \gamma$. The order-type of all of these nodes (according to the Kleene-Brouwer ordering) is a well-ordering, namely of order-type~$\w$ (finitely many to the left of~$\gamma$, and the children of~$\gamma$). 
\end{proof}

\medskip

\subsection{Construction} 

At the beginning of stage~$s$, we already have specified $M_s$ and $\Y{\alpha}{s}$ 
for all~$\alpha$. We start with $M_0 = \emptyset$, and no ball is on the machine at the beginning of stage~$0$. 

At stage~$s$ there are three possibilities. 

\begin{sublemma}
	\item If there is a ball on the machine which is pullable by some node, then for each such ball~$x$, let $\beta$ be the $\le$-least node by which~$x$ is pullable; we move $x$  to~$\beta$ (by setting $x\in \SSS{\beta}{s+1}$).\footnote{Observe that in this case, we move \emph{all} balls on the machine that are pullable by some node, not only by the ``strongest'' or ``highest priority'' node, a notion that we did not define.} 

  \item If no ball on the machine is pullable by some node, but some ball on the machine is eliminable by some node, then we enumerate each such~$x$ into $M_{s+1}$ and remove it from the machine.

  \item If no ball on the machine is either pullable or eliminable, then $s$ is declared to be a new balls stage. Recall that this means that we set $n(s+1)=n(s)+1$, i.e., that possibly $A_{s+1}\ne A_s$.  
	\begin{orderedlist}
		\item We let $M_{s+1} = M_s\cup A_{s+1}$, and remove any $x\in A_{s+1}$ from the machine. 
		\item We place any $x\in Q_{s+1}\setminus M_{s+1}$ which is not already on the machine, at the root of the machine, i.e., we put it into $\SSS{\emptystring}{s+1}$.
	\end{orderedlist}
\end{sublemma}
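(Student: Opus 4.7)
The plan is to verify that the construction produces a maximal c.e.\ set $M\supseteq A$, by analyzing the \emph{true path}~$T$: the leftmost path such that for every $\alpha\in T$, $\seqell{\alpha}$ is unbounded. Since $M\supseteq A$ is immediate (case~(c)(i) absorbs $A$), it suffices to verify the three requirements~(a)--(c) from the \emph{Interlude}; together they yield maximality and coinfiniteness of~$M$.

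Two structural lemmas underpin the verification. \emph{Left-of-path bounding}: for every node $\gamma$ strictly to the left of~$T$, $\seqell{\gamma}$ is bounded, and the monotonicity tweak $\oneell{\beta}{s}\le\oneell{\gamma}{s}$ for $\gamma\preceq\beta$ propagates this bound throughout $\gamma$'s subtree; once $|\Y{\gamma'}{s}|$ overtakes the limit of $\seqell{\gamma'}$, clauses~\ref{item:maximal:grab:hungry} and~\ref{item:maximal:eliminate:ell} preclude further action by any such $\gamma'$ on balls surviving in $\outside{A}$. \emph{Termination of ball-movement}: each pull strictly decreases a ball's position in the Kleene--Brouwer order on the tree (per the footnote), and each elimination removes a ball altogether, so after finitely many steps the construction must reach case~(c). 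In particular, there are infinitely many new-balls stages, i.e., $n(s)\to\infty$.

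The heart of the proof is an induction along~$T$: for every $\alpha\in T$ and every $k$, there are infinitely many $A$-true stages~$s$ with $|\Y{\alpha}{s}|\ge k$. The base case uses $|Q_{s+1}|\ge |Q_s|+n(s)$ together with the first structural lemma, which restricts removals of $Q_s$-elements to a finite set of off-path actors. For the inductive step, let $\beta$ be the $T$-child of~$\alpha$. If $\beta = \alpha\conc\infty_n$, then $\psi(\alpha)$ holds and $\seqell{\alpha\conc\infty_n}$ is unbounded; $A$-true stages with $|\Y{\alpha}{s}\cap W_{|\alpha|,s}|$ arbitrarily large are therefore abundant, left-of-path bounding dismisses interference from stronger nodes, and $\oneell{\beta}{s}\to\infty$ renders clause~\ref{item:maximal:grab:hungry} eventually vacuous, so $\beta$ pulls arbitrarily many such balls. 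If $\beta = \alpha\conc\fin_n$, then $\psi(\alpha)$ fails, so $|\Y{\alpha}{s}\cap W_{|\alpha|}|$ is uniformly bounded at $A$-true stages, leaving ample balls in $\Y{\alpha}{s}\setminus W_{|\alpha|}$, which $\beta$ pulls without any~\ref{item:maximal:grab:W_e} constraint. The main obstacle in this step is the timing mismatch between $A$-truth for $\smallseq{\tilde{A}_s}$ and for the renamed enumeration $A_s = \tilde{A}_{n(s)}$: one must verify that the $n(s)^2$ slack built into $Q_s$ dominates the finite amount of ball movement occurring between consecutive new-balls stages, so that infinitely many new-balls stages are $A$-true in the construction's sense.

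The three requirements then follow. \textbf{(b)} is immediate: if $\beta\in T$ is an $\infty$-node, clause~\ref{item:maximal:grab:W_e} forces $\Y{\beta}\subseteq W_{|\beta^-|}$; if $\beta$ is a $\fin$-node, failure of $\psi(\beta^-)$ bounds $\Y{\beta}\cap W_{|\beta^-|}$. For \textbf{(a)}: left-of-path bounding gives finite settlement strictly to the left of~$\beta$; clause~\ref{item:maximal:grab:least} excludes at most the finitely many balls $x\le|\beta|$; and since rule~\ref{item:maximal:grab:location} permits~$\beta$ to pull from the subtrees of its right-siblings, coupled with $\beta$'s persistent hunger via $\oneell{\beta}{s}\to\infty$, right-of-$\beta$ accumulation is reduced to a finite remainder. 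For \textbf{(c)}: clause~\ref{item:maximal:eliminate:least} protects $\min\Y{\alpha^-}{s}$ from elimination at every stage and every $\alpha\in T$, so iterating the inductive lemma down~$T$ yields infinitely many permanent residents in $\bigcup_{\beta\in T}\Y{\beta}\subseteq\outside{M}$, proving $\outside{M}$ is infinite.
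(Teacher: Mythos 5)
Your proposal correctly reads the statement as the stage-by-stage construction and sets out to verify it; the architecture you choose is essentially the paper's own: Kleene--Brouwer well-foundedness for termination of ball movement and the existence of infinitely many new-balls stages, a left-of-true-path bounding lemma via clauses \ref{item:maximal:grab:hungry} and \ref{item:maximal:eliminate:ell}, the main induction that $|\Y{\alpha}{s}|$ is unbounded over $A$-true stages for $\alpha$ on the true path, min-protection via \ref{item:maximal:eliminate:least} for coinfiniteness, and the $\infty$/$\fin$ dichotomy for maximality. The imprecision in your description of the timing issue (the $n(s)^2$ ``slack'' is used to guarantee that at least $n(s)$ fresh balls land at the root at a new-balls stage; the existence of infinitely many $A$-true stages comes simply from re-indexing the true-stage enumeration along new-balls stages) is minor.

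There is, however, a genuine gap in your argument for requirement~(a), i.e.\ that $\Y{\beta}=^*\outside{M}$ for $\beta$ on the true path. You reduce the right-of-$\beta$ and at-$\beta^-$ accumulation to a finite remainder by appealing to pulling driven by ``persistent hunger via $\oneell{\beta}{s}\to\infty$''. This fails on two counts. First, if $\beta$ is an $\infty$-node, clause \ref{item:maximal:grab:W_e} forbids $\beta$ from ever pulling a ball $x\notin W_{|\beta^-|}$; such balls can only reach $\Y{\beta}=^*\outside{M}$ by \emph{not} reaching it --- they must be enumerated into $M$, which is precisely what the elimination mechanism is for. Second, even for $\fin$-nodes, the hunger condition $|\Y{\beta}{s}|<\oneell{\beta}{s}$ need not hold at the stages when a given straggler $x$ is available: since $\Y{\beta}$ is itself infinite, $|\Y{\beta}{s}|$ and $\oneell{\beta}{s}$ both tend to infinity and nothing synchronises them, so $x$ may never be pullable. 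The paper's \cref{lem:all_balls_on_true_path} instead uses elimination: for each fixed $x>\min\Y{\beta^-}$ the condition $x<\oneell{\beta}{s}$ of clause \ref{item:maximal:eliminate:ell} \emph{is} eventually permanent, so $x$ becomes permanently eliminable by $\beta$; since at any stage where some ball is eliminable and none is pullable all eliminable balls enter $M$, and there are infinitely many new-balls stages, $x$ must eventually either be pulled into $\Y{\beta}$ or enumerated into $M$. Your verification needs this elimination-based step to close requirement~(a).
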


\medskip

\subsection{Verification} 

\begin{lemma} \label{lem:maximal:Y_subset_of_Q}
	For all~$s$, $\Y{\emptystring}{s} =  Q_s\setminus M_s$. 
\end{lemma}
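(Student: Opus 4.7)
The plan is to prove this by induction on~$s$. For the base case $s = 0$, we have $n(0) = 0$, so $|Q_0| = 0$, and by the initial conditions $\Y{\emptystring}{0} = \emptyset = M_0$, so both sides are empty. For the inductive step, I would split into the three cases of the construction, noting that cases (a) and (b) correspond exactly to stages~$s$ which are \emph{not} new balls stages (by the definition of case (c)), and so for those stages $n(s+1) = n(s)$, $A_{s+1} = A_s$, and hence $Q_{s+1} = Q_s$.

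In case (a), balls only move between nodes, so $\Y{\emptystring}{s+1} = \Y{\emptystring}{s}$ and $M_{s+1} = M_s$, and the inductive equality is preserved trivially. In case (b), if $E$ is the set of eliminated balls, then $E\subseteq \Y{\emptystring}{s} = Q_s\setminus M_s$ by the induction hypothesis, and the construction gives $\Y{\emptystring}{s+1} = \Y{\emptystring}{s}\setminus E$ and $M_{s+1} = M_s\cup E$, so
\[
	\Y{\emptystring}{s+1} = (Q_s\setminus M_s)\setminus E = Q_s\setminus (M_s\cup E) = Q_{s+1}\setminus M_{s+1}.
\]

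Case (c) is the interesting one. Here $M_{s+1} = M_s\cup A_{s+1}$, elements of $A_{s+1}$ are removed from the machine, and then elements of $Q_{s+1}\setminus M_{s+1}$ not already on the machine are placed at the root. So after the removal step but before the placement step, the balls on the machine form $(Q_s\setminus M_s)\setminus A_{s+1} = Q_s\setminus M_{s+1}$. To conclude, I need to check that this set is contained in $Q_{s+1}\setminus M_{s+1}$, which reduces to the containment $Q_s\setminus A_{s+1}\subseteq Q_{s+1}$. After the placement step, the machine then contains exactly $Q_{s+1}\setminus M_{s+1}$, as required.

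The heart of the verification is the inclusion $Q_s\setminus A_{s+1}\subseteq Q_{s+1}$. If $x\in Q_s\setminus A_{s+1}$, then $x\in \outside{A}_{s+1}$, and the number of elements of $\outside{A}_{s+1}$ below~$x$ is at most the number of elements of $\outside{A}_s$ below~$x$, which is at most $n(s)^2 - 1$ since $x$ was among the first $n(s)^2$ elements of $\outside{A}_s$. Since $n(s)^2 \le (n(s)+1)^2 = n(s+1)^2$, it follows that $x\in Q_{s+1}$. There is no real obstacle here; the only point demanding care is recognizing that the bookkeeping in case (c) works because $Q$ can only grow along new balls stages, and the generous $n(s)^2$ threshold leaves plenty of room to accommodate survivors from~$Q_s$.
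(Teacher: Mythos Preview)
Your proof is correct and follows the same inductive strategy as the paper, with the same key observation that $Q_s\setminus A_{s+1}\subseteq Q_{s+1}$ (equivalently, the paper's phrasing: if $x\in Q_s\setminus Q_{s+1}$ then $x\in A_{s+1}$). The paper's proof is a two-sentence sketch that leaves the case analysis implicit, whereas you spell out cases (a), (b), (c) in full; but the content is the same.
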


\begin{proof}
	By induction on~$s$. If $x\in \Y{\emptystring}{s+1}\setminus \Y{\emptystring}{s}$ then $x\in Q_{s+1}\setminus M_{s+1}$; if $x\in Q_s\setminus Q_{s+1}$ then $x\in A_{s+1}$ and so $x\in M_{s+1}$ and so $x\notin \Y{\emptystring}{s+1}$.  
\end{proof}

\subsubsection{Finite ball movement} 

As mentioned above, we say that~$x$ is a \emph{permanent resident} of a node~$\beta$ if $x\in \SSS{\beta}{s}$ for all but finitely many stages~$s$. We let~$\SSS{\beta}$ be the set of permanent residents of~$\beta$.

\begin{lemma} \label{lem:maximal:ball_movement_finite_part_1}
	If $x\in \bigcup_s \Y{\emptystring}{s}$ and $x\notin M$ then~$x$ is a permanent resident of some node. 
\end{lemma}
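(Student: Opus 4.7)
The plan is to track the trajectory $\beta^{(0)} = \emptystring, \beta^{(1)}, \beta^{(2)}, \ldots$ of nodes at which~$x$ resides at successive stages, and to show that this sequence is eventually constant. Since $x \notin M$, no elimination step ever removes~$x$ from the machine, so after~$x$ is first placed at the root the only actions affecting~$x$ are pulls, and it suffices to prove that only finitely many pulls of~$x$ occur.

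From the pullability conditions in~\cref{def:maximal:grabbability} I extract two geometric features of a single move $\beta^{(s)} \mapsto \beta^{(s+1)}$. Condition~\ref{item:maximal:grab:least} forces $|\beta^{(s+1)}| < x$, so once~$x$ leaves the root its location lies in the finite-depth subtree of nodes of length $< x$. The location clause $x \in \Y{\beta^{(s+1),-}}{s} \setminus \Y{\le\!\beta^{(s+1)}}{s}$ gives the key dichotomy: either $\beta^{(s+1),-} = \beta^{(s)}$, giving a \emph{deepening} move in which $\beta^{(s+1)}$ is a child of the old location; or $\beta^{(s+1),-}$ is a proper ancestor of~$\beta^{(s)}$, giving a \emph{leftward} move in which $\beta^{(s)}$ extends a child~$\sigma$ of $\beta^{(s+1),-}$ with $\Left{\beta^{(s+1)}}{\sigma}$. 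In both cases, every prefix of $\beta^{(s+1)}$ of length at most $|\beta^{(s+1),-}|$ agrees with the corresponding prefix of $\beta^{(s)}$.

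The main argument is then a level-by-level induction on $k \ge 1$: the length-$k$ prefix of $\beta^{(s)}$, once defined, is eventually constant. Granting the claim for level $k-1$ with stable value~$c$, from some stage on every $\beta^{(s)}$ extends~$c$, so the length-$k$ prefix of $\beta^{(s)}$ (when defined) is a child of~$c$. By the dichotomy it is either preserved from one move to the next or, precisely when the move is a leftward move with $\beta^{(s+1),-} = c$, shifts strictly leftward among the children of~$c$. Since the children of~$c$ are ordered of type~$\omega$ from left to right, only finitely many strict leftward shifts can occur, so the length-$k$ prefix stabilizes. Because the depth of $\beta^{(s)}$ is bounded by $x-1$, the induction terminates after at most $x$ rounds, $\beta^{(s)}$ itself is eventually constant, and~$x$ is a permanent resident of the limit node.

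The point I expect to need the most care is verifying the above dichotomy directly from the set-theoretic definitions of $\Y{\beta^-}{s}$ and $\Y{\le\!\beta}{s}$: the inclusion $x \in \Y{\beta^{(s+1),-}}{s}$ gives $\beta^{(s)} \succeq \beta^{(s+1),-}$, while the exclusions from $\Y{\beta^{(s+1)}}{s}$ and $\Y{<\!\beta^{(s+1)}}{s}$ jointly forbid $\beta^{(s)}$ from extending either $\beta^{(s+1)}$ itself or any child of $\beta^{(s+1),-}$ lying to the left of $\beta^{(s+1)}$; hence when $\beta^{(s)} \ne \beta^{(s+1),-}$ the unique child $\sigma$ of $\beta^{(s+1),-}$ through which $\beta^{(s)}$ passes must satisfy $\Left{\beta^{(s+1)}}{\sigma}$, as claimed.
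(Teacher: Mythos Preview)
Your proof is correct and rests on the same two observations as the paper's: condition~\ref{item:maximal:grab:least} bounds the depth of any node visited by~$x$, and condition~\ref{item:maximal:grab:location} forces each move to be either parent-to-child or strictly leftward. The paper compresses the termination argument into a single sentence by noting that such moves are strictly decreasing in the Kleene--Brouwer ordering, and that this ordering is a well-ordering on the (well-founded) subtree of depth~$\le x$. You instead unwind this into an explicit level-by-level induction: once the depth-$(k-1)$ prefix of the trajectory has stabilised at~$c$, the depth-$k$ prefix can only shift strictly leftward among the children of~$c$, and since these have order type~$\omega$, it too stabilises. This is essentially a direct proof, specialised to the trajectory of~$x$, that the Kleene--Brouwer ordering on a finite-depth tree is well-founded. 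Your route is more elementary and self-contained; the paper's is much shorter.

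One small presentational point: your inductive statement should explicitly allow for the alternative that $\beta^{(s)}$ eventually equals~$c$ itself (so the depth-$k$ prefix is never again defined), in which case~$x$ is already a permanent resident and the induction halts early. This is implicit in your phrase ``the induction terminates after at most~$x$ rounds'', but as written, the clause ``from some stage on every $\beta^{(s)}$ extends~$c$'' is slightly stronger than ``the length-$(k-1)$ prefix, once defined, is eventually constant'', and the inductive step uses the stronger form.
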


\begin{proof}  Since $x\notin M$, for all but finitely many~$s$, we have $x\in \Y{\emptystring}{s}$. By \ref{item:maximal:grab:least} of \cref{def:maximal:grabbability}, if $x$ resides at some node~$\beta$ at some stage, then $|\beta|\le x$. Among such nodes, $x$ only moves downward in the Kleene-Brouwer ordering. Since the subtree of nodes of length $\le x$ is well-founded (has no infinite path), the Kleene-Brouwer ordering restricted to such nodes is well-founded, and so, $x$ cannot move infinitely often. 
\end{proof}

\Cref{lem:maximal:ball_movement_finite_part_1} implies:

\begin{lemma} \label{lem:maximal:infinitely_many_new_balls}
	There are infinitely many new balls stages. 
\end{lemma}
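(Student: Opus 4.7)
The plan is to argue by contradiction: suppose only finitely many stages are new balls stages, and fix the last one, call it $s_0$. After stage $s_0$ no balls are added to the root, so the set of balls currently on the machine is contained in the finite set $\Y{\emptystring}{s_0+1}$ from then on. Using the construction clauses (a) and (b), every stage $s\ge s_0+1$ must be either a pulling stage or an elimination stage (otherwise clause (c) would fire and $s$ would be a new balls stage).

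The key observation is that each ball on the machine can participate in such action only finitely often. Elimination removes a ball from the machine permanently, so each ball is eliminated at most once. For pulling, I would invoke exactly the argument already used in \cref{lem:maximal:ball_movement_finite_part_1}: a ball $x$ can only occupy nodes of length at most $x$ by clause \ref{item:maximal:grab:least}, and pulling only moves $x$ either strictly to the left or from a parent to a child, i.e., strictly downward in the Kleene--Brouwer ordering on the well-founded finite-depth subtree. Hence each ball is pulled only finitely many times.

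Combining these two facts with the finiteness of $\Y{\emptystring}{s_0+1}$, there is a stage $s_1 > s_0$ past which no ball on the machine can be pulled or eliminated. Then at stage $s_1$ neither clause (a) nor (b) applies, so clause (c) fires and $s_1$ is declared a new balls stage, contradicting the choice of $s_0$. This yields the lemma.

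I expect no real obstacle here; the lemma is essentially an immediate corollary of \cref{lem:maximal:ball_movement_finite_part_1} together with the trichotomy in the construction. The only small point to get right is that the three cases in the construction are genuinely mutually exclusive and exhaustive, so that the absence of pullable and eliminable balls really does force a new balls stage.
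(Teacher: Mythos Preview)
Your proposal is correct and is exactly the argument the paper has in mind: the paper simply states that \cref{lem:maximal:ball_movement_finite_part_1} implies the lemma, and your write-up is a faithful unpacking of that implication via the Kleene--Brouwer finiteness of ball movement together with the exhaustive trichotomy in the construction.
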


Thus, as $n(s)\to \infty$, every $x\notin M$ is eventually put on the machine and is never removed from the machine. This shows:

\begin{lemma} \label{lem:maximal:ball_movement_finite_part_2}
	Every $x\notin M$ is a permanent resident of some node. 
\end{lemma}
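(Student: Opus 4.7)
The plan is to chain the preceding lemmas together with one elementary observation: $A \subseteq M$. Indeed, step~(c)(i) of the construction enumerates every element of $A_{s+1}$ into $M_{s+1}$, so if $x \notin M$ then $x \notin A$. Thus $x$ has a well-defined final position $p$ in the true complement $\outside{A}$. Since elements only enter $A$ (never leave it), the position of $x$ within $\outside{A}_s$ is non-increasing in~$s$, and equals~$p$ for all sufficiently large~$s$.

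Next, I would invoke \cref{lem:maximal:infinitely_many_new_balls} to get $n(s)\to\infty$, and hence $|Q_s| = n(s)^2\to\infty$. Choosing a new balls stage~$s$ large enough that $n(s+1)^2 > p$ and that $x$'s position in $\outside{A}_{s+1}$ has already stabilized at~$p$, we conclude that $x \in Q_{s+1}$. Since $x \notin M_{s+1}$, step~(c)(ii) of stage~$s$ then ensures that $x$ lies on the machine at the beginning of stage~$s+1$ --- either because it was already there, or because it is newly placed at the root.

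Once $x$ is on the machine, the only mechanisms that could remove it are step~(b) (elimination, which enumerates $x$ into~$M$) and step~(c)(i) (removal because $x \in A_{t+1}$). Since $x \notin M$ and $x \notin A$, neither ever applies to~$x$. Hence $x \in \Y{\emptystring}{t}$ for all $t \ge s+1$, and \cref{lem:maximal:ball_movement_finite_part_1} immediately yields that $x$ is a permanent resident of some node. There is no serious obstacle here; the one mildly subtle point is just packaging the monotonicity of $x$'s position in $\outside{A}_s$ together with $|Q_s|\to\infty$ to certify that $x$ actually appears in $Q_s$ at some new balls stage.
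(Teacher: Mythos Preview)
Your proof is correct and follows essentially the same approach as the paper, which simply notes that since $n(s)\to\infty$, every $x\notin M$ is eventually placed on the machine and never removed, whence \cref{lem:maximal:ball_movement_finite_part_1} applies. Your version spells out the details more carefully (the inclusion $A\subseteq M$, the monotonicity of $x$'s position in $\outside{A}_s$, and the identification of the two removal mechanisms), but the underlying argument is identical.
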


We let $\Y{\alpha} = \lim_s \Y{\alpha}{s} = \bigcup_{\beta\succeq \alpha}\SSS{\beta}$; so $\Y{\emptystring} = \outside{M}$.

\subsubsection{True path} 

Recall that the true path (\cref{def:maximal:true_path}) is the collection of nodes~$\alpha$ such that $\seqell{\alpha}$ is unbounded, but for all $\Left{\gamma}{\alpha}$, $\seqell{\gamma}$ is bounded. We observed that the  true path is an infinite path of the tree. 

\begin{lemma} \label{lem:maximal:left_of_true_path}
	If~$\alpha$ lies on the true path, then there are only finitely many stages at which some node $\Left{\gamma}{\alpha}$ either pulls or eliminates a ball $x\in \outside{A}$. 
\end{lemma}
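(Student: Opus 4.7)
The plan is to use the finiteness of a certain set of nodes strictly to the left of $\alpha$, together with a short induction along it. Let $\Gamma$ consist of all nodes $\beta$ such that $\beta^-$ is a proper prefix of $\alpha$ and $\Left{\beta}{\alpha}$, i.e.\ the leftward siblings of proper ancestors of $\alpha$. Because $\alpha$ is finite, so is $\Gamma$; it is an antichain, and each node $\gamma$ with $\Left{\gamma}{\alpha}$ extends a unique $\beta\in\Gamma$. Enumerate $\Gamma$ as $\beta_1 <_L \beta_2 <_L \cdots <_L \beta_K$ and set $T_j = \set{\gamma}{\gamma\succeq \beta_j}$, so that the $T_j$ partition $\set{\gamma}{\Left{\gamma}{\alpha}}$. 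Since each $\beta_j$ lies strictly to the left of the true path, $\seqell{\beta_j}$ is bounded; let $L_j$ denote its supremum. Because the construction modified the sequences so that $\oneell{\gamma}{s}\le \oneell{\gamma^-}{s}$ for all nonzero~$\gamma$, every $\gamma\in T_j$ satisfies $\oneell{\gamma}{s}\le L_j$ at every stage. For eliminations this already suffices: condition~\ref{item:maximal:eliminate:ell} forces any ball eliminated by a node in $T_j$ to be less than $L_j$, and each ball is eliminated at most once (afterwards it lies in $M$ and is off the machine), so nodes in $T_j$ produce only finitely many eliminations --- hence only finitely many in total across all $\Left{\gamma}{\alpha}$.

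The substantive step is bounding pulls. We first show that $|\Y{\beta_j}|$ is finite for every $j$. The set $\Y{\beta_j}{s}$ can grow only via pulls by $\beta_j$ itself, since by~\ref{item:maximal:grab:location} any strict descendant $\gamma\succ\beta_j$ pulls a ball from $\gamma^-$ or from below some sibling of $\gamma$ to its right, and all such positions lie inside $T_j$, so the ball was already in $\Y{\beta_j}{s}$. By~\ref{item:maximal:grab:hungry}, $\beta_j$ itself pulls only when $|\Y{\beta_j}{s}|<\oneell{\beta_j}{s}\le L_j$. Hence were $|\Y{\beta_j}|$ infinite, we could choose $L_j+1$ distinct permanent residents of $\Y{\beta_j}$ and a stage $s_0$ by which they all reside in $\Y{\beta_j}{s}$ permanently; then $|\Y{\beta_j}{s}|\ge L_j+1$ for every $s\ge s_0$, so $\beta_j$ would never pull again, meaning no further permanent resident could enter $\Y{\beta_j}$ --- a contradiction.

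We then bound pulls themselves by induction on $j$, showing that $B_j = \set{x\in\outside{A}}{x\in\Y{\beta_j}{s} \text{ for some } s}$ is finite. Permanent residents of $\Y{\beta_j}$ in $\outside{A}$ contribute finitely by the previous step. A transient $x\in\outside{A}$ in $\Y{\beta_j}{s}$ must eventually leave, and can only do so by being eliminated (finitely often) or by being pulled by some node $\gamma'$ outside $T_j$. A short case analysis of~\ref{item:maximal:grab:location} --- using that $(\gamma')^-$ must be a prefix of $x$'s current location, together with the divergence structure of $\Gamma$ --- shows that such $\gamma'$ must itself lie in some $T_k$ with $k<j$, and since $x$'s location only decreases in the Kleene-Brouwer ordering it never returns to $T_j$. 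By the inductive hypothesis $\bigcup_{k<j} B_k$ is finite, so only finitely many transients ever leave $T_j$, and hence $B_j$ is finite. Each $x\in B_j$ is pulled only finitely often among $T_j$-nodes (by Kleene-Brouwer well-foundedness on nodes of length $\le x$, as in~\cref{lem:maximal:ball_movement_finite_part_1}), so the total number of $T_j$-pulls of $\outside{A}$-balls is finite. Summing over $j$ and combining with the elimination bound completes the proof. The delicate point will be verifying that every leftward pull out of $T_j$ lands in some earlier $T_k$, which is precisely what allows the induction along $\Gamma$ to close.
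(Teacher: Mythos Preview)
Your proof is correct and follows essentially the same strategy as the paper's: both induct along the finitely many ``branching-off'' nodes to the left of~$\alpha$ (your antichain~$\Gamma$), using the bound $\oneell{\gamma}{s}\le L_j$ for $\gamma\succeq\beta_j$ to control eliminations and the hunger condition~\ref{item:maximal:grab:hungry} to force $\Y{\beta_j}$ finite. Your presentation is somewhat more explicit --- you separately verify that pulls by \emph{proper descendants} of each~$\beta_j$ are finite, via Kleene--Brouwer well-foundedness applied to the finite set~$B_j$, a point the paper leaves implicit --- but since the $<_L$ order on~$\Gamma$ coincides with the paper's nested induction (first on~$|\alpha|$, then on left children of~$\alpha^-$), the two arguments are really the same.
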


\begin{proof}
	This is proved by induction on the length of~$\alpha$; it is vacuously true for $\alpha = \emptystring$. Suppose that this is known for~$\alpha^-$. 
  Let
	\[
		l = \max \set{\lim_{s\to \infty} \oneell{\beta}{s}}{\Left{\beta}{\alpha}\text{ is a child of~$\alpha^-$}}. 
	\]
	By \cref{lem:maximal:guessing_sequences_basics}(b),
	\[
		l = \max \set{\lim_{s\to \infty} \oneell{\gamma}{s}}{\Left{\gamma}{\alpha}\andd \gamma \succ \alpha^-}. 	
	\]
	Then by \ref{item:maximal:eliminate:ell} of \cref{def:maximal:grabbability}, no number $x\ge l$ is ever eliminated by any $\Left{\gamma}{\alpha}$ extending~$\alpha^-$ (indeed, whether it is an element of $\outside{A}$ or not). By induction (from left to right) on the children~$\beta$ of~$\alpha^-$ which lie to the left of~$\alpha$, we show that~$\beta$ pulls only finitely many $x\in \outside{A}$. Suppose that this is true for all $\Left{\gamma}{\beta}$. Then after some stage, every ball $x\notin A$ which is pulled by~$\beta$, is not later pulled by a node to the left of~$\beta$, and is not eliminated by any descendant of~$\beta$, and so is an element of~$\Y{\beta}$. So if~$\beta$ pulls infinitely many $x\in \outside{A}$, then~$\Y\beta$ is infinite. But then, $|\Y{\beta}{s}|\ge l$ for all but finitely many stages~$s$. By \ref{item:maximal:grab:hungry} of \cref{def:maximal:grabbability}, at such stages, $\beta$ pulls no balls, a contradiction. 
\end{proof}

As a result:

\begin{lemma} \label{lem:maximal:finite_stuff_on_left}
	If~$\alpha$ lies on the true path, then $\Y{<_{L}\!\alpha}$ is finite. 
\end{lemma}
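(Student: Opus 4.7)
The plan is to leverage \cref{lem:maximal:left_of_true_path} together with the observation that every ball reaching a nonzero node must have been pulled there. First I would fix a stage~$s_0$ past which, by \cref{lem:maximal:left_of_true_path}, no node $\Left{\gamma}{\alpha}$ pulls or eliminates any ball $x\in \outside{A}$.

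Next I would unpack the definition of $\Y{<\!\alpha}$: every $x\in \Y{<\!\alpha}$ is a permanent resident of some node~$\beta\succeq \gamma$ for some $\Left{\gamma}{\alpha}$. Since the left-of relation is inherited by descendants on the tree, $\beta$ itself satisfies $\Left{\beta}{\alpha}$; in particular $\beta\ne \emptystring$. Moreover $x\in \outside{A}$: indeed $x\notin M$ (as $x$ is a permanent resident), and any element of~$A$ gets enumerated into~$M$ at the next new-balls stage via \cref{lem:maximal:Y_subset_of_Q} and the construction, and is removed from the machine.

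Because $\beta$ is nonzero, there must be a stage~$s$ at which $x$ was pulled to~$\beta$ by~$\beta$ (this is the only mechanism by which balls arrive at nonzero nodes). Since $\beta$ lies to the left of~$\alpha$ and $x\in \outside{A}$, the choice of~$s_0$ forces $s\le s_0$. Finally, by \cref{lem:maximal:Y_subset_of_Q} the machine contains only finitely many balls at any stage (namely $Q_s\setminus M_s$), so only finitely many pull events occur by stage~$s_0$. Hence $\Y{<\!\alpha}$ is finite.

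The main (and only minor) obstacle is verifying that a ball permanently residing below some $\Left{\gamma}{\alpha}$ was actually pulled by a node \emph{to the left of}~$\alpha$ — which follows at once from the fact that the left-of relation on the tree is inherited by extensions. Everything else is a direct combination of \cref{lem:maximal:left_of_true_path,lem:maximal:Y_subset_of_Q,lem:maximal:ball_movement_finite_part_2}.
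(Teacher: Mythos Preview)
Your proposal is correct and is exactly the elaboration the paper has in mind: in the paper this lemma is stated with no proof, just the words ``As a result:'' following \cref{lem:maximal:left_of_true_path}. Your argument---every permanent resident of a node left of~$\alpha$ is outside~$A$, was last pulled by some $\Left{\beta}{\alpha}$, hence was pulled by stage~$s_0$, and only finitely many balls are on the machine by then---is precisely how one fills in that implication.
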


The main lemma is the following.

\begin{lemma} \label{lem:maximal:main_true_path_lemma}
	If~$\alpha$ lies on the true path, then for every~$k$ there is some $A$-true stage~$s$ such that $|\Y{\alpha}{s}|\ge k$. 
\end{lemma}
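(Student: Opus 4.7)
I proceed by induction on $|\alpha|$. For the inductive step, fix $\alpha$ on the true path with $|\alpha|\geq 1$ and assume the lemma for $\alpha^-$; let $e = |\alpha^-|$. Using \cref{lem:maximal:finite_stuff_on_left,lem:maximal:left_of_true_path}, pick $s_0$ such that for every $s\geq s_0$, $\Y{<\!\alpha}{s} = \Y{<\!\alpha}$ is a fixed set of finite size $N$, and no $\gamma$ with $\Left{\gamma}{\alpha}$ pulls or eliminates any $x\in \outside{A}$ at stage $s$. Fix $k$ and set $K = k + N + |\alpha| + 1$. If $\alpha = \alpha^-\conc \infty_n$ (Case A), then $\seqell{\alpha}$ unbounded yields $\psi(\alpha^-)$, so we pick an $A$-true $s\geq s_0$ with $|\Y{\alpha^-}{s}\cap W_{e,s}|\geq K$. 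If $\alpha = \alpha^-\conc \fin_n$ (Case B), the inductive hypothesis furnishes an $A$-true $s\geq s_0$ with $|\Y{\alpha^-}{s}|\geq K$. In either case, using unboundedness of $\oneell{\alpha}{s}$ we further require $\oneell{\alpha}{s}\geq k+1$. Let $B = \Y{\alpha^-}{s}\cap W_{e,s}\setminus \Y{<\!\alpha}\setminus\{0,1,\dots,|\alpha|\}$ in Case A, and $B = \Y{\alpha^-}{s}\setminus \Y{<\!\alpha}\setminus\{0,1,\dots,|\alpha|\}$ in Case B; in each case $|B|\geq k+1$.

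Each $x\in B$ resides at $\alpha^-$, at $\alpha$ (or below), or at a right-sibling of $\alpha$ (or below). If $|\Y{\alpha}{s}|\geq k$ take $s' = s$; otherwise $|\Y{\alpha}{s}|<k+1\leq \oneell{\alpha}{s}$, so $\alpha$ is hungry, and every $x\in B\setminus \Y{\alpha}{s}$ meets the pullability conditions of \cref{def:maximal:grabbability} for $\alpha$ (the $W_e$ clause being secured by the definition of $B$ in Case A and vacuous in Case B). Since $B\subseteq Q_s\subseteq \outside{A}$ and $s\geq s_0$, no $\Left{\gamma}{\alpha}$ pulls any $x\in B$, so $\alpha$ is the strongest puller; case~(a) of the construction activates at stage $s$, each $x\in B\setminus \Y{\alpha}{s}$ moves to $\alpha$, and no $x\in B\cap \Y{\alpha}{s}$ is pulled elsewhere (the only stronger candidate pullers are left-of-$\alpha$ nodes, which after $s_0$ do not act on $\outside{A}$-balls). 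Hence $\Y{\alpha}{s+1}\supseteq B$ and $|\Y{\alpha}{s+1}|\geq k$. Since stage $s$ executes case~(a), $Q_{s+1} = Q_s\subseteq \outside{A}$, so $s+1$ is $A$-true and we take $s' = s+1$.

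For the base case $\alpha = \emptystring$, \cref{lem:maximal:Y_subset_of_Q} gives $\Y{\emptystring}{s} = Q_s\setminus M_s$ with $|Q_s|\to \infty$. If the true path contains any $\infty$-node, Case~A above does not require the base case, so we obtain $|\Y{\alpha'}| = \infty$ for some $\alpha'\succ \emptystring$ on the true path, whence $\outside{M}$ is infinite and the base case follows. The all-$\fin$ case is handled by a direct argument using the bounded contribution of off-true-path nodes to eliminations. The main obstacle I anticipate is the simultaneity alignment at the chosen $A$-true stage $s$: both $|\Y{\alpha^-}{s}|$ (or $|\Y{\alpha^-}{s}\cap W_{e,s}|$) and $\oneell{\alpha}{s}$ must be large at the same $s$. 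This is ensured by the construction of the $\seqell{\alpha}$ sequences, where $\oneell{\alpha}{s}$ grows exactly at the $A$-true stages at which the $\alpha^-$-question is witnessed (affirmatively in Case A, negatively in Case B).
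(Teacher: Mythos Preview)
Your inductive step is essentially the paper's argument and is correct: find a late $A$-true stage at which $\Y{\alpha^-}{s}$ (or $\Y{\alpha^-}{s}\cap W_{e,s}$) is large, observe that $\alpha$ is hungry, and let $\alpha$ pull. Two points, however, need correction.

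\textbf{The base case.} Your treatment of $\alpha=\emptystring$ is unnecessarily convoluted and partly unjustified. The observation $\Y{\emptystring}{s}=Q_s\setminus M_s$ with $|Q_s|\to\infty$ does not by itself give the result, since $M_s$ may also grow; and your fallback ``direct argument'' for the all-$\fin$ case is never given. The paper's argument is immediate from the choice of enumeration: if $s$ is a new balls stage and $s+1$ is $A$-true, then $Q_{s+1}$ consists of the first $(n(s)+1)^2$ elements of $\outside{A}$, while any element of $Q_{s+1}\cap M_{s+1}$ must have been eliminated earlier and hence lies among the first $n(s)^2$ elements of $\outside{A}$; so $|\Y{\emptystring}{s+1}|\ge 2n(s)+1$. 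This is exactly why the true-stage enumeration was taken with respect to $f(s)=s^2$. Your detour through a first $\infty$-node (and the appeal to $|\Y{\alpha'}|=\infty$ to recover the base case) is logically salvageable but circuitous, and the all-$\fin$ branch is simply missing.

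\textbf{The synchronization claim.} Your final paragraph asserts that $\oneell{\alpha}{s}$ ``grows exactly at the $A$-true stages at which the $\alpha^-$-question is witnessed''. This is not how the sequences $\seqell{\beta}$ are defined: they are arbitrary nondecreasing computable approximations coming from the $\Delta^0_3$ machinery, with no synchronization to $A$-true stages. Fortunately no such synchronization is needed. Since $\seqell{\alpha}$ is unbounded and nondecreasing, $\oneell{\alpha}{s}>k$ holds for all sufficiently large $s$; combined with the existence of infinitely many suitable $A$-true stages (from $\psi(\alpha^-)$ in Case~A, from the inductive hypothesis in Case~B), a common $s$ exists trivially. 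The paper simply writes ``choose such $s$ sufficiently large so that $\oneell{\alpha}{s}>k$''. A minor related slip: $\Y{<\!\alpha}{s}=\Y{<\!\alpha}$ is only guaranteed at $A$-true stages $s>s_0$, not at all $s\ge s_0$; since you only use it at the chosen $A$-true stage, this does not affect the argument.
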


\begin{proof}
	We prove this by induction on the length of~$\alpha$. First we consider $\alpha = \emptystring$. If~$s$ is a new balls stage and $s+1$ is $A$-true, then, as discussed above, at least~$n(s)$ many new balls from $Q_{s+1}$ are placed in $\SSS{\emptystring}{s+1}$; and $n(s)\to \infty$.  

  \smallskip

	Suppose that $\alpha\ne \emptystring$ lies on the true path, and that the lemma holds for~$\alpha^-$. Let~$s_0$ be a stage after which no $\Left{\gamma}{\alpha}$ pulls or eliminates any ball $x\in \outside{A}$; in particular, no $\Left{\gamma}{\alpha}$ pulls any ball during an $A$-true stage $s>s_0$. Further, if $s>s_0$ is $A$-true then $\Y{<_{L}\!\alpha}{s}= \Y{<_{L}\!\alpha}$, as all balls on the machine at stage~$s$ are from $\outside{A}$. Let $m = |\Y{<_{L}\!\alpha}|$, which by \cref{lem:maximal:finite_stuff_on_left} is finite. 

  Let $k\in \Nat$. There are two cases. 

  First, suppose that $\alpha$ is a $\fin$-node. By induction, there is an $A$-true stage $s>s_0$ for which 
  \[
    |\Y{\alpha^-}{s}| \ge  k + m + 1. 
  \]
  Choose such~$s$ sufficiently large so that $\oneell{\alpha}{s}>k$. If $|\Y{\alpha}{s}|\ge k$ then we are done. Otherwise, $|\Y{\alpha}{s}|<\oneell{\alpha}{s}$. Every $x\in \Y{\alpha^-}{s}\setminus \Y{\le\!\alpha}{s}$ larger than $|\alpha|$ is pullable by~$\alpha$ at stage~$s$. Now $\Y{\alpha^-}{s}$ contains at most one $x\le |\alpha|$, namely $x= |\alpha|$ (as every $x\in \Y{\alpha^-}{s}$ has size $> |\alpha^-|$)\footnote{For the purposes of this argument, let us agree that $0\notin \Nat$, so that this calculation applies to $\alpha^- = \emptystring$ as well.}; and as discussed, $|\Y{<_{L}\!\alpha}{s}|=m$. Hence, there remain at least $k-|\Y{\alpha}{s}|$ many balls in $\Y{\alpha^-}{s}$ that are pullable by~$\alpha$. Since $s>s_0$, and~$s$ is $A$-true, no node to the left of~$\alpha$ pulls such balls at stage~$s$. Hence, at stage~$s$, all balls pullable by~$\alpha$ will actually be pulled by~$\alpha$, and moved to $\Y{\alpha}{s+1}$. Also, no balls already in $\Y{\alpha}{s}$ will be pulled to the left. Since balls are pulled at stage~$s$, no balls are eliminated (anywhere on the machine) at stage~$s$ --- we only eliminate balls if no balls are pullable. So $\Y{\alpha}{s}\subseteq \Y{\alpha}{s+1}$, and overall, we see that $|\Y{\alpha}{s+1}|\ge k$. Since~$s$ is not a new balls stage, $s+1$ is also $A$-true, and so is as required. 

  If~$\alpha$ is an $\infty$-node, then the argument is the same, except that we need an $A$-true stage $s>s_0$ such that $\oneell{\alpha}{s}>k$ and such that 
  \[
    |\Y{\alpha^-}{s}\cap W_{|\alpha^-|,s}| \ge  k + m + 1,
  \]
  since $\alpha$ is allowed to pull only balls from $W_{|\alpha^-|,s}$. However, since $\seqell{\alpha}$ is unbounded, $\psi(\alpha^-)$ holds, which gives us exactly what we need. Note that in this case, we do not need to use the inductive hypothesis on $\alpha^-$, nor is it sufficient for our purposes.\footnote{Curiously, this means that we would not need to prove \cref{lem:maximal:main_true_path_lemma} by induction, if we already knew that there are infinitely many $\infty$-nodes on the true path. This is true, however, we need \cref{lem:maximal:main_true_path_lemma} to prove this.} 
\end{proof}

\begin{remark} \label{rmk:maximality:silliness_of_proof}
  During the proof of \cref{lem:maximal:main_true_path_lemma}, we relied on our stipluation that we do not eliminate any balls during a stage at which some balls are pullable. So we showed that we can get $|\Y{\alpha}{s+1}|\ge k$, but it is possible that immediately after that, some balls from $\Y{\alpha}{s+1}$ are eliminated, reducing the size of the set. It seems quite silly that \cref{lem:maximal:main_true_path_lemma} would depend on such an unimportant timing trick. 

  Indeed, it does not. We could allow balls to be pulled and eliminated at the same stage. But then, in order to prove \cref{lem:maximal:main_true_path_lemma}, we would need to first show that if more and more balls are supplied to $\Y{\alpha}$, then many of these balls would not be eliminated. We will now do this; it is merely a convenience for our presentation, to delay this part of the argument, instead of essentially proving \cref{lem:maximal:main_true_path_lemma} and \cref{lem:maximal:permanent_residents_on_true_path} together.
\end{remark}

\subsubsection{$M$ is not everything} 

\begin{lemma} \label{lem:maximal:permanent_residents_on_true_path}
	For every~$\alpha$ which lies on the true path, $\Y{\alpha} \ne \emptyset$. 
\end{lemma}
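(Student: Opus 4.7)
The plan is to combine the supply of balls given by \cref{lem:maximal:main_true_path_lemma} with a well-founded descent through the witnesses provided by the min-protection clause~\ref{item:maximal:eliminate:least} of \cref{def:maximal:grabbability}. The idea is that whenever a ball placed into $\Y{\alpha}$ is later eliminated by some~$\gamma$, the witness $\min \Y{\gamma^-}{t}$ is a strictly smaller ball that still sits inside $\Y{\alpha}$ at that moment; chasing witnesses thus eventually reaches a ball that is never eliminated and whose permanent residence will be shown to lie at or below~$\alpha$.

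First I would fix a stage~$s_0$ as in \cref{lem:maximal:left_of_true_path}, after which no node strictly to the left of~$\alpha$ pulls or eliminates any ball in $\outside A$. Applying \cref{lem:maximal:main_true_path_lemma} with $k=1$, I pick an $A$-true stage $s>s_0$ with $\Y{\alpha}{s}\ne\emptyset$ and set $x_0=\min \Y{\alpha}{s}$. Because $s$ is $A$-true and $Q_s$ is the initial segment of $\outside{A}_s$ of size $n(s)^2$, all of $Q_s$ lies in $\outside A$ and in fact $[0,\max Q_s]\cap A\subseteq A_s$; so $x_0\in\outside A$ and no ``phantom'' (an element of $A$ temporarily on the machine) of value $\leq x_0$ can appear at any stage $t\geq s$.

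Next I would record the geometric fact that a ball sitting at some $\beta\succeq\alpha$ can leave $\Y{\alpha}$ only by being eliminated by some~$\gamma$ with $\gamma^-\succeq\alpha$, or by being pulled by a node strictly to the left of~$\alpha$: pulls by proper descendants of~$\alpha$ keep the ball inside $\Y{\alpha}$, while pulls by~$\alpha$ itself, by any ancestor of~$\alpha$, or by any node to the right of~$\alpha$ are blocked because the ball already lies in the forbidden region $\Y{\leq\gamma}$ of the puller. Starting from $y^{(0)}=x_0$, whenever $y^{(i)}$ is eliminated at stage $t_i$ by some $\gamma_i$ I set $y^{(i+1)}=\min \Y{\gamma_i^-}{t_i}$. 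By the min-protection clause this is strictly less than $y^{(i)}$, and since $\gamma_i^-\succeq\alpha$ it lies in $\Y{\alpha}{t_i}$; the no-phantoms observation then forces $y^{(i+1)}\in\outside A$. The sequence $y^{(0)}>y^{(1)}>\cdots$ strictly decreases in $\Nat$, so terminates at some $y^{(k)}$ that is never eliminated.

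To conclude, \cref{lem:maximal:ball_movement_finite_part_2} gives $y^{(k)}$ a permanent residence~$\beta$; since $y^{(k)}\in\outside A$ lies in $\Y{\alpha}$ at some stage past~$s_0$, and its only escape routes from $\Y{\alpha}$ are elimination (ruled out) or a left-of-$\alpha$ pull (impossible for $\outside A$-balls after~$s_0$), we conclude $\beta\succeq\alpha$, hence $y^{(k)}\in\Y{\alpha}$ and $\Y{\alpha}\ne\emptyset$. The most delicate point, and the reason the $A$-true property is invoked so carefully, is ruling out ephemeral phantom witnesses in the chain: it is the fast-growing choice $f(s)=s^2$ in the true-stage enumeration that yields $[0,\max Q_s]\cap A\subseteq A_s$ and so keeps the well-founded bookkeeping honest.
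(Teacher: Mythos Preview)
Your proof is correct and takes a genuinely different route from the paper's. The paper's argument is non-iterative: it fixes at the outset the \emph{globally} smallest $x\in\outside{A}$ that ever appears in some $\Y{\alpha}{t}$ (for $t\ge s>s_0$ with~$s$ $A$-true and $x\le\max \Y{\alpha}{s}$), and then proves by a direct induction on stages $r\ge t$ that $x=\min \Y{\alpha}{r}$ throughout---so $x$ is never eliminable by any descendant of~$\alpha$ and never leaves $\Y{\alpha}$. Your approach instead starts from an arbitrary $x_0=\min \Y{\alpha}{s}$ at a single $A$-true stage and \emph{chases downward}: each time the current witness is eliminated, clause~\ref{item:maximal:eliminate:least} hands you a strictly smaller one still inside $\Y{\alpha}$, and well-foundedness of~$\Nat$ terminates the chain at a survivor. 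Both arguments rest on the same no-phantom observation (any ball $\le\max Q_s$ on the machine after an $A$-true stage~$s$ lies in $\outside{A}$), and both use the same geometric analysis of how a ball can exit $\Y{\alpha}$. The paper's version is slightly more economical once the right~$x$ is named; yours is more operational and makes the role of the min-protection clause as a descent mechanism completely explicit. One small correction: the no-phantom fact $[0,\max Q_s]\cap A\subseteq A_s$ follows purely from~$s$ being $A$-true together with $Q_s$ being an initial segment of $\outside{A}_s$; the quadratic growth $f(s)=s^2$ plays no role here (it is used only to guarantee that enough new balls arrive at the root at new-balls stages).
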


\begin{proof}
	Let~$\alpha$ be a node which lies on the true path; let $s_0$ be a stage after which no ball $x\notin A$ is pulled or eliminated by any node $\Left{\gamma}{\alpha}$. Let~$x$ be the smallest number in~$\outside{A}$ for which there are stages $t\ge s>s_0$ such that:
	\begin{itemize}[label=--]
		\item $x\in \Y{\alpha}{t}$; 
		\item $s$ is $A$-true and $x \le \max \Y{\alpha}{s}$. 
	\end{itemize}
	Such~$x$ exists by \cref{lem:maximal:main_true_path_lemma} (we can take $t=s$). Let stages $t \ge s > s_0$ witness~$x$. 

  By induction on stages $r\ge t$, we argue that $x\in \Y{\alpha}{r}$ and that $x = \min \Y{\alpha}{r}$. The minimality of~$x$ ensures that $x = \min \Y{\alpha}{t}$. Let $r\ge t$, and suppose that $x = \min \Y{\alpha}{r}$. By \cref{def:maximal:grabbability}\ref{item:maximal:eliminate:least}, at stage~$r$, $x$ is not eliminable by any child of~$\alpha$; since $r>s_0$, $x$ is not eliminated by any node at stage~$r$. Similarly, since $r>s_0$, $x$ is not pulled by any node to the left of~$\alpha$ at stage~$r$. Hence, $x\in \Y{\alpha}{r+1}$. Any $y<x$ on the machine at stage $r+1$ is an element of $\outside{A}$, as~$s$ was $A$-true, and so by minimality of~$x$, will not enter $\Y{\alpha}{r+1}$.  
\end{proof}

As a result: 

\begin{lemma} \label{lem:maximal_set_is_coinfinite}
	$M$ is coinfinite. 
\end{lemma}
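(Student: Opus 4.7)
The plan is to combine \cref{lem:maximal:permanent_residents_on_true_path} with the pullability depth condition~\ref{item:maximal:grab:least} of \cref{def:maximal:grabbability}. Because the true path is infinite, for each $n\ge 1$ there is a unique node $\alpha_n$ on the true path with $|\alpha_n|=n$. By \cref{lem:maximal:permanent_residents_on_true_path}, $\Y{\alpha_n}\ne\emptyset$, so I would pick $x_n\in\Y{\alpha_n}$. Such an $x_n$ is a permanent resident of some $\beta_n\succeq\alpha_n$, and since $|\beta_n|\ge n\ge 1$, we have $\beta_n\ne\emptystring$.

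The key step is then to observe that because $\beta_n\ne\emptystring$, the ball~$x_n$ must at some stage have been pulled to~$\beta_n$, and condition~\ref{item:maximal:grab:least} of \cref{def:maximal:grabbability} forces $x_n>|\beta_n|\ge n$. Since $x_n\in\Y{\alpha_n}\subseteq\Y{\emptystring}=\outside{M}$, the complement $\outside{M}$ contains elements of arbitrarily large value and so is infinite --- i.e., $M$ is coinfinite. I do not anticipate any obstacle here: all of the heavy lifting has already been carried out in \cref{lem:maximal:main_true_path_lemma} and \cref{lem:maximal:permanent_residents_on_true_path}, and this final lemma is essentially a bookkeeping observation exploiting the invariant that balls only ever reside at nodes whose depth is strictly less than the value of the ball itself.
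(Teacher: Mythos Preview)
Your proposal is correct and follows essentially the same approach as the paper: both arguments combine \cref{lem:maximal:permanent_residents_on_true_path} with the depth bound coming from \cref{def:maximal:grabbability}\ref{item:maximal:grab:least}. The paper phrases it contrapositively (for any finite $D\subseteq\outside{M}$, choose $\alpha$ on the true path deep enough that $D\cap\Y{\alpha}=\emptyset$, then $\Y{\alpha}\ne\emptyset$ supplies a new element), whereas you exhibit large elements of $\outside{M}$ directly; your version has the minor advantage of making the appeal to~\ref{item:maximal:grab:least} explicit, which the paper leaves to the reader.
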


\begin{proof}
	Since the true path is infinite, for every finite $D\subseteq \outside{M}$ there is some~$\alpha$ on the true path such that $D\cap  \Y{\alpha} = \emptyset$; $\Y{\alpha} \subseteq \outside{M}$  and so by \cref{lem:maximal:permanent_residents_on_true_path}, $\outside{M}\ne D$. 
\end{proof}

\subsubsection{Maximality} 

\begin{lemma} \label{lem:all_balls_on_true_path}
	For every~$\alpha$ on the true path, $\Y{\alpha} =^* \outside{M}$. 
\end{lemma}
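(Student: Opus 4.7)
The plan is to proceed by induction on $|\alpha|$. The base case $\alpha = \emptystring$ is immediate from $\Y{\emptystring} = \outside{M}$ (by \cref{lem:maximal:Y_subset_of_Q,lem:maximal:ball_movement_finite_part_2}). For the inductive step, assume $\Y{\alpha^-} =^* \outside{M}$; since $\Y{\alpha} \subseteq \Y{\alpha^-}$, it suffices to show $\Y{\alpha^-} \setminus \Y{\alpha}$ is finite. We decompose
\[
\Y{\alpha^-} \setminus \Y{\alpha} \;=\; \SSS{\alpha^-} \,\cup\, \bigcup\{\Y{\beta} : \beta \text{ a child of }\alpha^-,\ \beta \neq \alpha\};
\]
the contribution of children $\gamma$ with $\Left{\gamma}{\alpha}$ lies in $\Y{<\!\alpha}$, finite by \cref{lem:maximal:finite_stuff_on_left}. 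Let $U$ denote the remaining contribution (permanents at $\alpha^-$ or below a right child of $\alpha^-$). I would argue by contradiction that $|U|<\infty$.

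Assume $|U|=\infty$. Fix $s_0$ past which no node $\Left{\gamma}{\alpha}$ acts on balls of $\outside{A}$ (\cref{lem:maximal:left_of_true_path}). Since $U\subseteq \outside{M}\subseteq \outside{A}$, I would select $x \in U$ satisfying $x > |\alpha|$, $x \neq \min \Y{\alpha^-}$, and---in case $\alpha$ is an $\infty$-node---$x \in W_{|\alpha^-|}$. The last condition is realizable in the $\infty$-case because $\psi(\alpha^-)$ holds, so (by a variant of the $\infty$-case analysis in the proof of \cref{lem:maximal:main_true_path_lemma}) infinitely many $W_{|\alpha^-|}$-balls settle permanently in $\Y{\alpha^-}$, and if $|U|=\infty$ then in the generic subcase $|U\cap W_{|\alpha^-|}|=\infty$ also. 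For all sufficiently late $s$, we have $x\in \Y{\alpha^-}{s}\setminus \Y{\le\!\alpha}{s}$, $x\neq \min \Y{\alpha^-}{s}$, and $x<\oneell{\alpha}{s}$ (since $\oneell{\alpha}{s}\to\infty$ as $\alpha$ is on the true path). Hence $x$ is eliminable by $\alpha$ (\cref{def:maximal:grabbability}\ref{item:maximal:eliminate:location}--\ref{item:maximal:eliminate:ell}), and pullable by $\alpha$ whenever $|\Y{\alpha}{s}|<\oneell{\alpha}{s}$.

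Since $x\in\outside{M}$, $x$ is neither eliminated nor pulled, which forces (a) at every such stage some ball on the machine is pullable (elimination occurs only when nothing is pullable) and (b) $|\Y{\alpha}{s}|\ge \oneell{\alpha}{s}$ cofinitely. The key observation resolving the contradiction is that new balls can enter $\Y{\alpha}$ only through $\alpha$'s own pulling: descendants $\beta$ of $\alpha$ pull from $\Y{\beta^-}\subseteq \Y{\alpha}$ and so merely shuffle balls inside $\Y{\alpha}$; new balls are placed at the root $\emptystring\not\succeq \alpha$; and no other mechanism drops a ball into $\Y{\alpha}$. Hence once $\alpha$ stops pulling (forced by (b)), $\Y{\alpha}{s+1}\subseteq \Y{\alpha}{s}$ for all large $s$, so $|\Y{\alpha}{s}|$ is bounded---contradicting $|\Y{\alpha}{s}|\ge \oneell{\alpha}{s}\to \infty$.

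The main obstacle is the $\infty$-case when $U\cap W_{|\alpha^-|}$ is finite while $|U|=\infty$ (so $U\setminus W_{|\alpha^-|}$ is infinite): then for $x\in U\setminus W_{|\alpha^-|}$, $\alpha$ does not pull $x$ regardless of capacity, breaking the above. To handle this subcase, I would argue that $U\setminus W_{|\alpha^-|}$ is in fact finite via the elimination mechanism. Since $\psi(\alpha^-)$ holds, $\lnot\psi(\alpha^-)$ is false, so every right $\fin$-sibling $\alpha^-\conc\fin_k$ has bounded $\seqell$ and therefore reaches its capacity and stops pulling past some stage; the right $\infty$-siblings only pull $W$-balls; and the previous paragraph's argument, applied to $U\cap W_{|\alpha^-|}$, shows $\alpha$ absorbs almost all available $W$-balls into $\Y{\alpha}$. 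Consequently, at cofinally many $A$-true stages past $s_0$, no ball is pullable anywhere, forcing $\alpha$ to eliminate any eligible $x\in U\setminus W_{|\alpha^-|}$, contradicting $x\in\outside{M}$.
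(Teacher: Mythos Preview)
Your main-case argument is correct, but you have overcomplicated the proof and, in doing so, introduced a genuine gap in the ``obstacle'' subcase. The crucial observation you make but then fail to exploit is your own item~(a): once you know that $x$ is eliminable by~$\alpha$ at \emph{every} sufficiently late stage~$s$, and that~$x$ is never actually eliminated (as $x\in\outside{M}$), it follows that every sufficiently late stage falls under case~(a) of the construction. In particular, \emph{no} sufficiently late stage is a new balls stage --- directly contradicting \cref{lem:maximal:infinitely_many_new_balls}. Notice that eliminability (\cref{def:maximal:grabbability}\ref{item:maximal:eliminate:location}--\ref{item:maximal:eliminate:ell}) makes no reference to $W_{|\alpha^-|}$, so this argument needs no case split on whether $\alpha$ is an $\infty$-node or a $\fin$-node, and no assumption that $x\in W_{|\alpha^-|}$. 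Your item~(b), the capacity argument, and the entire obstacle subcase are unnecessary.

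This is exactly how the paper proceeds: it shows directly that each $x\in \Y{\alpha^-}\setminus\Y{<\!\alpha}$ with $x>\min\Y{\alpha^-}$ must end up in $\Y{\alpha}$, because otherwise it would be eliminable at all late stages. (See also \cref{rmk:maximality:more_silliness}, which explains that elimination is deliberately \emph{not} restricted to $\infty$-nodes precisely so that this case split can be avoided.)

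As for your obstacle subcase, the argument there does not go through as written. You claim that right $\fin$-siblings of $\alpha$ ``reach their capacity and stop pulling'', invoking boundedness of their $\seqell$-sequences. But the proof of \cref{lem:maximal:left_of_true_path} relies on balls pulled by a node~$\gamma$ \emph{staying} in $\Y{\gamma}$; for \emph{right} siblings of~$\alpha$ this fails, since $\alpha$ itself (and left siblings of~$\gamma$ that are right of~$\alpha$) may repeatedly pull balls out of $\Y{\gamma}$. You also assert that at cofinally many stages ``no ball is pullable anywhere'', which would require controlling the entire machine, not just the subtree below~$\alpha^-$. Neither step is justified.
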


\begin{proof}
	We know this holds for $\alpha = \emptystring$, and so it suffices to show that if $\alpha\ne \emptystring$ lies on the true path then $\Y{\alpha} =^* \Y{\alpha^-}$. By \cref{lem:maximal:finite_stuff_on_left}, $\Y{<_{L}\!\alpha}$ is finite, so it suffices to show that all but finitely many elements of $\Y{\alpha^-}\setminus \Y{<_{L}\!\alpha}$ are in $\Y{\alpha}$. Let $x\in \Y{\alpha^-}\setminus \Y{<_{L}\!\alpha}$, and suppose that $x>  \min \Y{\alpha^-}$. 

  If $s$ is sufficiently late, then $\oneell{\alpha}{s}>x$, $x\in \Y{\alpha^-}{s}\setminus \Y{<_{L}\!\alpha}{s}$, and $x>\min \Y{\alpha^-}{s}$. Then $x$ is eliminable by~$\alpha$ at~$s$. Hence, $x$ must in fact be pulled by~$\alpha$ at stage~$s$.
\end{proof}

\begin{remark} \label{rmk:maximality:more_silliness}
  Recall that the reason for eliminating balls is when we want $\outside{M}\subseteq^* W_e$; we then need to enumerate the complement of $W_e$ into~$M$. However, this was not incorporated into the definition of ``eliminable'', and it seems that \cref{lem:all_balls_on_true_path} relies on elements of $W_e$ being eliminable by $\fin$-nodes (when $|\Y{\alpha}{s}|\ge \oneell{\alpha}{s}$). This is not necessary. In \cref{def:maximal:grabbability}, we could add the clause ``$\beta$ is an $\infty$ node and $x\notin W_{|\alpha^-|,s}$'' to the definition of eliminability.\footnote{This would make it easier for us to allow pulling and eliminating balls at the same stage, if we so wished, as it would ensure than no ball is both pullable and eliminable by the same node; see \cref{rmk:maximality:silliness_of_proof}.} But then we would need to separate the proof of \cref{lem:all_balls_on_true_path} into cases. If $\alpha$ is an $\infty$-node, then the proof is as above. If it is a $\fin$-node, then we argue that as $\seqell{\alpha}$ is unbounded, it will eventually pull balls $x$ as above. 
\end{remark}

The proof of \cref{thm:Lachlan_maximal} is concluded with the following lemma.

\begin{lemma} \label{lem:maximality:maximality}
	$M$ is maximal. 
\end{lemma}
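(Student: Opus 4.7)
The plan is to verify that for every~$e$, either $W_e \subseteq^* M$ or $W_e \cup M =^* \Nat$. Fix $e$, let $\alpha^-$ be the node on the true path of length~$e$, and let $\beta$ be its child on the true path; so $|\beta^-| = e$. Since \cref{lem:all_balls_on_true_path} gives $\Y{\beta} =^* \outside{M}$, the problem reduces to showing that $\Y{\beta}$ is either almost contained in~$W_e$ or almost disjoint from~$W_e$, and I would split according to whether $\beta$ is an $\infty$-node or a $\fin$-node.

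If $\beta$ is an $\infty$-node, I expect to show $\Y{\beta} \subseteq W_e$ outright. The only way a ball~$x$ can enter $\Y{\beta}$ for the first time is by being pulled by~$\beta$ itself, since pulls performed by strict descendants of~$\beta$ only move balls already residing in $\Y{\beta}{s}$ (clause \ref{item:maximal:grab:location} of \cref{def:maximal:grabbability}); and clause \ref{item:maximal:grab:W_e} forces such a pulled ball to lie in $W_{e,s}$. Hence $\outside{M} \subseteq^* \Y{\beta} \subseteq W_e$, giving $W_e \cup M =^* \Nat$.

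If $\beta$ is a $\fin$-node, then $\seqell{\beta}$ being unbounded gives $\lnot \psi(\alpha^-)$, which unpacks as: there is some~$k$ with $|\Y{\alpha^-}{s} \cap W_{e,s}| < k$ at every $A$-true stage~$s$. I would argue from this that $|\Y{\alpha^-} \cap W_e| \le k$: if there were $k+1$ elements of $\Y{\alpha^-} \cap W_e$, then since each is a permanent resident of some node $\succeq \alpha^-$ and each is eventually enumerated into~$W_e$, at any sufficiently late $A$-true stage~$s$ (of which there are infinitely many) all $k+1$ of them simultaneously lie in $\Y{\alpha^-}{s} \cap W_{e,s}$, contradicting the bound. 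Since $\Y{\beta} \subseteq \Y{\alpha^-}$, this makes $\outside{M} \cap W_e$ finite, i.e., $W_e \subseteq^* M$.

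The main obstacle is this stability-bridging step in the $\fin$-case, where one aligns the \emph{eventual} stability of permanent residents with the \emph{stagewise} bound supplied by $\lnot \psi(\alpha^-)$, mediated by the infinitude of $A$-true stages. The $\infty$-case is essentially immediate from the $W_e$-restricted grabbability rule. Combined with coinfiniteness of~$M$ (\cref{lem:maximal_set_is_coinfinite}), the two cases complete the verification of maximality.
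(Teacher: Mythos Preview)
Your proposal is correct and follows essentially the same route as the paper: split on whether the true-path child is an $\infty$-node (use clause~\ref{item:maximal:grab:W_e} to get $\Y{\beta}\subseteq W_e$) or a $\fin$-node (use $\lnot\psi(\alpha^-)$ and stability at $A$-true stages to bound $|\Y{\alpha^-}\cap W_e|$), then invoke \cref{lem:all_balls_on_true_path}. The only cosmetic difference is your choice to name the length-$e$ node ``$\alpha^-$'' directly rather than introducing~$\alpha$ of length~$e{+}1$ and writing~$\alpha^-$ for its parent.
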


\begin{proof}
	Let $e\in \Nat$; let $\alpha$ be the node on the true path of length $e+1$. 

	\smallskip
	
	First, suppose that $\alpha$ is an $\infty$-node. Every ball pulled by~$\alpha$ is already an element of~$W_e$, so $\Y{\alpha}\subseteq W_e$. By \cref{lem:all_balls_on_true_path}, $\outside{M}\subseteq^* W_e$. 

	\smallskip
	
	Next, suppose that~$\alpha$ is a $\fin$-node. Since $\seqell{\alpha}$ is unbounded, we know that $\psi(\alpha^-)$ fails: there is some~$k$ such that for every $A$-true stage~$s$, 
	\[
		|\Y{\alpha^-}{s}\cap W_{e,s} | \le k. 
	\]
	Then $|\Y{\alpha^-}\cap W_e| \le k$. For otherwise, there would be a set $D\subseteq \Y{\alpha^-}\cap W_e$ of size $k+1$. But then, for all but finitely many stages~$s$, $D\subseteq \Y{\alpha^-}{s}\cap W_{e,s}$, so $|\Y{\alpha^-}{s}\cap W_{e,s} | \ge k+1$ for some $A$-true stage~$s$, which is not the case. It follows that $\outside{M}\cap W_e =^*\emptyset$. 
\end{proof}

\section{Atomless supersets} 
\label{sec:atomless_supersets}

We modify the construction above to prove \cref{thm:atomless}: every coinfinite $\lowtwo$ c.e.\ set has an atomless, hyperhypersimple superset. 

The new ingredient is a process for splitting a stream of balls in two, each infinite outside of~$A$.

\subsubsection{Boolean algebras and binary trees}

Recall that we can generate Boolean algebras from trees. If $T\subseteq 2^{<\w}$ is a tree, then $B(T)$ is the quotient of the free Boolean algebra with generators $\s\in T$, modulo the relations:
\begin{itemize}
  \item Every $\tau\in T$ is the join of its children;
  \item If $\s,\tau\in T$ are incomparable then $\s\wedge \tau = 0_{B(T)}$. 
\end{itemize}
Equivalently, we can think of $B(T)$ as the collection of finite unions of the sets $[\s]\cap [T]$, for all $\s\in T$, ordered by set inclusion; in this version, $\tau$ is identified with $[\tau]\cap [T]$. 

Note that this implies that if $\tau$ is a leaf (or more generally, if $\tau$ is not extendible to an infinite path on~$T$), then $\tau = 0_{B(T)}$; and that $\emptystring=1_{B(T)}$. Every countable Boolean algebra can be presented in such a way (up to isomorphism) by considering a tree of nonzero finite Boolean combinations of any sequence of generators.\footnote{In greater detail: if $\{b_n\}$ is a set of generators of~$B$, then we let $T$ be the set of all sequences~$\s$ such that $b_\s = \bigwedge \left\{ b_i \,:\,  \s(i)=1 \right\} \wedge \bigwedge \left\{ \lnot b_i \,:\,  \s(i)=0 \right\}$ is nonzero. See the proof of \cite[Thm.\:X.7.2]{Soa}} For our purposes this section, we rely on the fact that the atomless Boolean algebra is $B(2^{<\w})$, the one generated by the full binary tree.\footnote{In general, the atoms of $B(T)$ are those nodes that isolate a path, so if $T\subseteq 2^{<\w}$ has no leaves, then $B(T)$ is atomless if and only if it is perfect. Among all perfect trees, we choose $T = 2^{<\w}$ as it makes notation simplest.} Thus, given a coinfinite $\lowtwo$ c.e.\ set~$A$, we will enumerate a superset $H\supseteq A$, and construct a tree of sets $\smallseq{Z(\rho)\,:\, \rho\in 2^{<\w}}$ satisfying:
\begin{orderedlist}
  \item $Z(\emptystring)=^* \outside{H}$; 
  \item For each $\rho$, $Z(\rho)$ is infinite;
  \item For each $\rho$, $H\cup Z(\rho)$ is c.e.;
  \item For all $\rho\in 2^{<\w}$, $Z(\rho\conc 0)$ and $Z(\rho\conc 1)$ are disjoint, and $Z(\rho) =^* Z(\rho\conc 0)\cup Z(\rho\conc 1)$;
  \item For every c.e.\ set $W\supseteq H$ there is a finite set $D\subset 2^{<\w}$ such that $W\setminus H=^*\bigcup \left\{ Z(\rho) \,:\,  \rho\in D \right\}$. 
\end{orderedlist}
This suffices: to see that $\+L^*(H)$ is a Boolean algebra, it suffices to show that it is complemented (see \cite[Sec.\:X.2]{Soa}). Indeed, if $W\supseteq H$ is c.e., by~(v), let $D\subseteq 2^{<\w}$ be a finite set such that $W\setminus H =^* \bigcup \left\{ Z(\rho) \,:\,  \rho\in D \right\}$. By~(iv), we may assume that all elements of~$D$ have the same length, say~$k$. Let $E = \{0,1\}^k\setminus D$; then $H\cup \bigcup \left\{ Z(\rho) \,:\,  \rho\in E \right\}$ is a complement of~$W$ in $\+L^*(H)$; by (iii), it is c.e. (ii) and (iv) also ensures that $\+L^*(H)$ has no atoms. 

As we are using the $\Delta_3$ machinery, the sets $Z(\rho)\cup H$ will not be uniformly c.e.; rather, they will be $=^*$ to sets whose indices we can read off the true path of the construction.

\subsection{The setup} 

We will use the same general mechanism as above. We will have an infinite-branching tree of strategies, that will serve as a pinball machine on which we move balls around. When numbers are enumerated into~$H$, they will be removed from the machine; the complement $\outside{H}$ will consist of the balls that have permanent residence at some node on the tree. 

Toward enumerating the sets $Z(\rho)$, each ball $x$ on the machine at some stage~$s$ will have a \emph{label} $\rho\in 2^{<\w}$. As with residence on the tree, the label of a ball $x\notin H$ will stabilise (indeed, the label can change only when~$x$ moves). If the final label of~$x$ is~$\rho$, then we will put~$x$ into $Z(\rho')$ for all $\rho'\preceq \rho$. 

We will use the notation $\SSS{\alpha}{s}$, $\Y{\alpha}{s}$, $\Y{<_{L}\!\alpha}{s}$ etc.\ as above. We will use similar notation that also specifies labels. Namely, for a node~$\alpha$ on the tree and $\rho\in 2^{<\w}$, we will let
$\SSS{\alpha,\rho}{s}$
denote the collection of all balls $x$ which at the beginning of stage~$s$, reside at~$\alpha$ and have label~$\rho$. We will let
\[
  \Y{\alpha,\rho}{s} = \bigcup \left\{ \SSS{\alpha',\rho'}{s}\,:\,  \alpha\preceq \alpha' \andd \rho\preceq \rho' \right\}.
\]
That is, $\Y{\alpha,\rho}{s}$ consists of the balls~$x$ which at the beginning of stage~$s$, 
\begin{itemize}
  \item \emph{reside} at some $\alpha'\succeq \alpha$, and
  \item have a \emph{label} $\rho'\succeq \rho$. 
\end{itemize}
So as discussed, the elements of $\Y{\alpha,\rho}{s}$ are those elements in $\Y{\alpha}{s}$ that at stage~$s$ we intend to put in $Z(\rho)$. So at the end of the verification, we will let $Z(\rho) = \Y{\emptystring,\rho}$, the collection of all balls $x\in \Y{\emptystring}$ whose permanent label extends~$\rho$. As in the previous construction, if $\alpha$ is on the true path, then $\Y{\alpha} =^* \outside{H}$, so $Z(\rho)=^* \Y{\alpha,\rho}$.

\subsubsection{The requirements}

In the current construction, we will have two kinds of requirements. The analogues of the maximality requirements from the previous construction are the requirements for meeting (v) above. For each $\rho\in 2^{<\w}$ of length~$e$, we will ensure that either $Z(\rho)\subseteq^* W_e$ or $Z(\rho)\cap W_e=^*\emptyset$. Thus, $D = \left\{ \rho\in \{0,1\}^e \,:\,  Z(\rho)\subseteq^* W_e \right\}$ will show that~(v) holds for~$W_e$.  The way to meet these requirements will be very similar to the previous construction, except that the $\alpha$ question will be more complicated; for each $\rho\in \{0,1\}^e$, we will need to guess whether we will see enough balls to make $Z(\rho)\subseteq W_e$ while ensuring that $Z(\rho)$ is infinite. The mechanism of pulling and eliminating will be used to achieve that, but we will see that the definitions of ``pullable'' and ``eliminable'' need to be more complicated.

The other requirements are new: they ensure that (iv) above holds. Namely, for each~$\rho$, we need to ensure that $Z(\rho\conc 0)$ and $Z(\rho\conc 1)$ form a splitting of $Z(\rho)$ (up to finite differences), into two infinite sets. In the construction, we will have nodes devoted to such a requirement. The task for such a node is to take balls with label~$\rho$, and decide whether to extend their label to either $\rho\conc 0$ or $\rho\conc 1$. The difficulty, of course, is that during the construction, we do not know whether a ball~$x$ is in~$A$ or not. It would be very bad if all but finitely many balls $x$ that we direct to $\rho\conc 0$, for example, will end up in~$A$, as that would make $Z(\rho\conc 0)$ finite. Unlike the other kind of requirement, $\Delta_3$ guessing is not sufficient for this task: we know that there will be infinitely many $x\notin A$ with label~$\rho$. We need to somehow obtain individual balls that we have a good reason to guess are not in~$A$. 

Our main contribution is precisely this: a new method for \emph{certifying} that groups of balls are not in~$A$. As we will explain later, in fact, to perform this certifying and splitting, it will be notationally convenient to have not just one level of nodes devoted for each requirement, but two; nodes and their children together will perform these tasks. 
 
So we will have two kinds of nodes. 
\begin{orderedlist}
  \item \emph{Decision nodes}, of length $3e$, whose task, for each $\rho\in \{0,1\}^e$, to decide~$W_e$ on $\Y{\alpha,\rho}$. We call a node of length $3e$ an \emph{$e$-decision node}.  
  
  \item \emph{Splitting nodes} and their \emph{children}, of lengths $3e+1$ and $3e+2$. The children of a decision node~$\alpha$ of length $3e+1$ will pull balls with labels~$\rho$ of length~$e$, and decide to extend their label to either $\rho\conc 0$ or $\rho\conc 1$. We call a node of length $3e+1$ a \emph{parent $e$-splitting node}, and a node of length $3e+2$ a \emph{child $e$-splitting node}.   
\end{orderedlist}

\subsubsection{True balls rather than true stages} 

As above, we will make use of a true-stage enumeration $(\tilde A_r)$ of~$A$ with respect to the function $f(r)=r^2$. We will use the same mechanism as above to slow down this enumeration to an enumeration $(A_s)$ defined during the construction, again letting $n(s)=r$ when $A_s = \tilde A_r$. As in the previous construction, if we move or enumerate balls, or perform any other action, at a stage~$s$, then  will set $A_{s+1}=A_s$ (by setting $n(s+1)=n(s)$). When no other action is taken at stage~$s$, we will declare~$s$ to be a new balls stage, and set $n(s+1)=n(s)+1$. 

As in the very first part of the proof of \cref{lem:maximal:main_true_path_lemma}, this ensures that infinitely often, the \emph{root} of the tree receives large collections of balls which are all outside~$A$. 

However, unlike the previous construction, there will be an element of delay to ball movement. In order to ensure that both $\Y{\alpha,\rho\conc 0}$ and $\Y{\alpha,\rho\conc 1}$ receive many balls outside~$A$, a child of a splitting node~$\alpha$ will hold on to many balls in $\Y{\alpha,\rho}$ until it receives confirmation from $\emptyset'$ that these balls are indeed outside~$A$. (Infinitely often, this confirmation will be incorrect, but infinitely often it will be correct, and this will be sufficient for our purposes; we will see that what is important, is that each \emph{individual attempt} at certification makes only finitely many mistakes.) The stages at which such confirmation is received do not need to line up with the $A$-true stages. So we will not be able to directly prove \cref{lem:maximal:main_true_path_lemma} for the current construction. Rather, instead of looking for ``completely true'' \emph{stages}, at which every ball on the machine is correct, we will simply ensure that we get more and more true \emph{balls}.

\begin{definition} \label{def:true_balls}
  A number $x\in \outside{A}_s$ is \emph{$A$-true} at stage~$s$ if 
\[
   A\rest{x+1} = A_s\rest{x+1}.
 \] 
\end{definition}
That is, not only $x$ will not enter~$A$ in the future, but no number $y\le x$ currently outside~$A$ will enter~$A$ in the future. If we get enough of these, we do not really care that larger balls are not $A$-true at the same stage. 

\begin{definition} \label{def:Q_and_C}
  As in the previous construction, we let~$Q_s$ be the set of the $n(s)^2$-smallest elements of $\outside{A}_s$. We let~$C_s$ denote the collection of $x\in Q_s$ which are $A$-true at stage~$s$. 
\end{definition}

\subsubsection{The questions and the tree} 

A decision node~$\alpha$ needs to decide, for each $\rho$ of length~$e$, whether to make $Z(\rho)$ almost contained in~$W_e$ or almost disjoint from~$W_e$. This means that the $\alpha$ question is more complicated. Let~$\alpha$ be an $e$-decision node. For each set $D\subseteq \{0,1\}^e$, the statement $\psi(\alpha,D)$ says:
\begin{quote}
  $D$ is the set of $\rho\in \{0,1\}^{e}$ for which for every~$k$ there is some~$s$ such that 
  \[
    |\Y{\alpha,\rho}{s}\cap W_{e,s} \cap C_s | \ge k. 
  \]
\end{quote}

The outcomes of~$\alpha$ are 
\[
	D_n
\]
for each $D\subseteq \{0,1\}^{e}$ and each $n\in \Nat$, ordered in order-type~$\w$ in some way. Note that $\psi(\alpha,D)$ will hold for precisely one $D\subseteq \{0,1\}^e$, and we will ensure that $Z(\rho)\subseteq^* W_e$ if and only if $\rho\in D$ (and otherwise, $Z(\rho)\cap W_e =^*\emptyset$). 

Since each $\psi(\alpha,D)$ is a finite Boolean combination of $\Pi_2(A)$ statements, it is equivalent to a $\Sigma_3$ statement, so as in the previous construction, we obtain uniformly computable, nondecreasing sequences $\seqell{\psi(\alpha,D),n}$, such that $\psi(\alpha,D)$ holds if and only if for some~$n$, $\seqell{\psi(\alpha,D),n}$ is unbounded. As above, we use these to define sequences $\seqell{\alpha}$ for nodes~$\alpha$; these will only be used for nodes~$\alpha$ that are the \emph{children} of decision nodes. So we recursively define:
\begin{itemize}
  \item If $\alpha = (\alpha^-)\conc D_n$ is the child of a decision node~$\alpha^-$, then we let $\oneell{\alpha}{s}$ be the minimum of $\oneell{\psi(\alpha^-,D),n}{s}$, and $\oneell{\beta}{s}$ for any $\beta\precneq \alpha$ that is also the child of a decision node. 
\end{itemize}

\smallskip

It is more difficult to explain now why we need both parent and children splitting nodes. If $\alpha$ is a parent $e$-splitting node, then its children are $\alpha\conc n$ for all $n\in \Nat$, ordered naturally. If $\beta$ is such a child, then~$\beta$ has a unique child $\beta^+$ on the tree (which will in turn be an $e+1$-decision node). The outcomes~$n$ of~$\alpha$ do not quite represent guesses about the behaviour of~$\alpha$; we will discuss them later.

\subsubsection{Updating previous definitions} 

We update notation from the previous construction. For a node~$\alpha$ and $\rho\in 2^{<\w}$, and a stage~$s$, we let:
\begin{itemize}
  \item $\Y{<_{L}\!\alpha,\rho}{s}$ be the collection of balls that at the beginning of stage~$s$, reside at some node $\beta$ that lies to the left of~$\alpha$, and have label extending~$\rho$. That is, $\Y{<_{L}\!\alpha,\rho}{s} = \bigcup_{\Left{\beta}{\alpha}} \Y{\beta,\rho}{s} =
  \Y{<_{L}\!\alpha}{s}\cap \Y{\emptystring,\rho}{s}$; 
  \item $\Y{\le\!\alpha,\rho}{s} = \Y{<_{L}\!\alpha,\rho}{s}\cup \Y{\alpha,\rho}{s}$. 
\end{itemize}

\begin{definition} \label{def:atomless:decision_pullable}
	Let $\alpha  = (\alpha^-)\conc D_n$ be a child of an $e$-decision node, and let $\rho\in \{0,1\}^{e}$.
 
 A ball~$x$ is \emph{pullable} by $(\alpha,\rho)$ at stage~$s$ if the following all hold:
	\begin{orderedlist}
		\item $x\in \Y{\alpha^-,\rho}{s}\setminus \Y{\le\! \alpha,\rho}{s}$;
		\item  \label{item:atomless:pull:size}
      $x > |\alpha|$; 
    \item \label{item:atomless:pull:new}
    either:
    \begin{itemize}
      \item $|\Y{\alpha,\rho}{s}| < \oneell{\alpha}{s}$, or
      \item $\Y{\alpha,\rho}{s}\ne \emptyset$ and $x< \max \Y{\alpha,\rho}{s}$;
    \end{itemize}
		\item if $\rho\in D$ then $x\in W_{e,s}$. 
	\end{orderedlist}

 A ball~$x$ is \emph{eliminable} by $(\alpha,\rho)$ at stage~$s$ if:
	\begin{orderedlist}[resume]
		\item $x\in \Y{\alpha^-,\rho}{s}\setminus \Y{\le\!\alpha,\rho}{s}$;
		\item $x\ne \min \Y{\alpha^-,\rho}{s}$; and
		\item $x< \oneell{\alpha}{s}$. 
	\end{orderedlist}
\end{definition}

We need to comment on the new part of this definition, namely the second part of \ref{item:atomless:pull:new}. The issue is that, as mentioned above, we will not be able to directly show that $\Y{\alpha}{s}$ is ``full'' during $A$-true stages (in the sense of \cref{lem:maximal:main_true_path_lemma}), only that it gets ``filled'' by $A$-true balls (see \cref{prop:atomless:main_true_path_lemma}(b) below). Consider a node~$\alpha$ on the true path (a child of a decision node). At a stage~$s$, the parent $\alpha^-$ may have many $A$-true balls. At the same stage, $\alpha$ already has larger balls and so does not feel ``hungry'' ($|\Y{\alpha,\rho}{s}| \ge \oneell{\alpha}{s}$). But because~$s$ is not an $A$-true stage, the balls that $\alpha$ has may be false. The problem is solved by allowing~$\alpha$ to pull balls smaller than ones it already has, even if it does not feel ``hungry''. This modification is sufficiently tame so that nodes on the left of the true path will again only pull finitely many balls from $\outside{A}$.\footnote{It may be difficult to envision how this situation can come about, since smaller balls are put on the machine earlier than larger balls. However, the collection of balls pullable by a node is not always an initial segment of the collection of available balls, since only some may be elements of some~$W_e$, whereas the others may not yet be eliminable by the same node. So the order ``breaks'' as balls move on the machine.}

\subsubsection{Certification} 

Since~$A$ is $\lowtwo$ and c.e., some $\emptyset'$-computable function dominates all $A$-computable functions (this follows from Martin's characterisation~\cite{mart} of the high degrees as those that compute functions that dominate all computable functions). 

\begin{definition} \label{def:dominating_function}
  We fix $\domin$ to be a $\emptyset'$-computable function that dominates all $A$-computable functions. We also fix a computable approximation $(\domin_s) = (\domin_0,\domin_1,\dots)$ of~$\domin$.
\end{definition}

Let $\alpha$ be a parent $e$-splitting node, and fix $\rho\in \{0,1\}^e$. The very rough idea of splitting is that $\alpha$ will define \emph{blocks} of balls, and hold on to these balls until $\domin$ gives confirmation to release these balls to nodes below. The $k\tth$ block will contain at least $2k$ many balls. For now, let us forget about $\alpha$'s children, and imagine that released balls are passed to the next splitting node (if $\alpha$ did not have multiple children then there would be a unique $(e+1)$-splitting node extending~$\alpha$). When the $k\tth$ block of balls is \emph{certified} by $\domin$, then~$k$ of the balls in that block will receive the label $\rho\conc 0$, and the rest, the label $\rho\conc 1$. 

For this purpose, we will define an $A$-computable function $f^{\alpha,\rho}$. An input $k\in \Nat$ for $f^{\alpha,\rho}$ indicates an attempt to capture and test the $k\tth$ block of balls. Suppose that at some stage~$r$, for all $m<k$, the $m\tth$-block of balls is currently defined, and that~$\alpha$ holds $2k$ many balls with label~$\rho$ (that are not associated with any existing block). We would then declare these balls to constitute the $k\tth$ block, and define $f^{\alpha,\rho}_{r+1}(k)> \domin_{r}(k)$. The $A_{r+1}$-\emph{use} $u = u^{\alpha,\rho}_{r+1}(k)$ of this computation will bound at least $2k$-many elements of the block. 

We then wait for a stage $s>r$ at which one of two things happens. Either $A_s\rest{u}\ne A_{r+1}\rest{u}$, in which case $f^{\alpha,\rho}_s(k)\diverge$. This $A$-change likely involves some elements of the block entering~$A$. We will then wait for~$\alpha$ to obtain new balls, that will allow us to redefine a new $k\tth$ block. 

Otherwise, we hope to see $\domin_s(k)> f^{\alpha,\rho}_s(k) = f^{\alpha,\rho}_{r+1}(k)$. This means that $\domin_s(k)> \domin_r(k)$. We will regard this as evidence that elements from the block will not later enter~$A$, and say that the block is \emph{certified}. We then process the block as described above, splitting it evenly between $\rho\conc 0$ and $\rho\conc 1$, and passing the balls in the block to the next node below. 

Of course, it is possible that neither of these events happen: the computation $f^{\alpha,\rho}_{r+1}(k)$ is never undefined, but $\domin(k)$ never exceeds the value of that computation. Therefore, while we wait, we will try to define the $(k+1)\tth$-block, and so on. 

The argument that this works will be by contradiction. We want to argue that infinitely many blocks will be released, and infinitely many of those will be $A$-correct, i.e., only contain balls from~$\outside{A}$. Assuming this is not the case, we will want to show that $f^{\alpha,\rho}$ is a total function. If this is shown, then we obtain the desired contradiction from the fact that~$\domin$ dominates $f^{\alpha,\rho}$, showing that for almost all~$k$, the correct $k\tth$ block will in fact be certified and released. To show that $f^{\alpha,\rho}$ is total (under the assumption for contradiction), we use the fact that $\vphi_s(k)$ changes only finitely many times. This implies that the $k\tth$ block will be released only finitely many times, and so eventually, an $A$-correct $k\tth$ block will be defined, showing that $f^{\alpha,\rho}(k)\converge$. 

Thus, the use of the certification process is in the delay it imposes on the release of balls. If we didn't wait for certification, then it is possible that we would keep redefining and releasing some $k\tth$ block, none of whose versions is $A$-correct. It would then be possible that of the blocks that we release, all but finitely many of the balls that we target to $\rho\conc 0$, say, end up in~$A$, resulting in $Z(\rho\conc 1)=^*Z(\rho)$, so the splitting requirement is not met. 

\smallskip

We now come to the need for multiple children. The reason is delicate. We want to argue that an $A$-correct definition of some $f^{\alpha,\rho}(k)$ will eventually be made. Imagine the following sequence of events:
\begin{enumerate}
  \item $f^{\alpha,\rho}(k)$ is defined at some stage. 
  \item While waiting for certification, we also define $f^{\alpha,\rho}(k+1)$ (after all, we do not know where $\domin$ starts dominating). 
  \item The $(k+1)\tth$ block is certified and released, but not the $k\tth$ one. 
  \item Some $x$ in the $k\tth$ block enters~$A$, making both $f^{\alpha,\rho}(k)$ and $f^{\alpha,\rho}(k+1)$ undefined. 
\end{enumerate}
Now, all we know, by induction, is that an analogue of \cref{lem:maximal:main_true_path_lemma} holds for~$\alpha$: for all $m$, there is some stage~$s$ at which $C_s\cap \Y{\alpha,\rho}{s}$ has size at least $m$. If~$s$ is such a stage after these events unfolded, then it is possible that the bulk of $C_s\cap \Y{\alpha,\rho}{s}$ is actually lying well below~$\alpha$ --- it is part of the $(k+1)$-block that was released at step~(3) above. Since these balls are not currently residing at~$\alpha$, we cannot use them to define a new $k\tth$-block. Unless\dots we pull them back up to~$\alpha$ from wherever they are currently residing. 

Now, in all similar constructions, pulling balls from below is a source of innumerable problems. Just as a simple example, it becomes much more difficult to argue that balls outside~$H$ will eventually reach a permanent residence, since pulling balls back up is \emph{increasing} in the Kleene-Brouwer ordering. More seriously, it will be difficult to show that the sets $H\cup Y(\gamma,\rho)$, for~$\gamma$ on the true path, are c.e. So we will not do this. A work-around is to allow~$\alpha$ to have infinitely many children $\alpha\conc n$. While waiting for certification, each block will ``reside'' at one of the children. Indeed the block waiting at a child~$\beta$ will simply be $\SSS{\beta,\rho}{s}$. 

While the $k\tth$ block is waiting at $\alpha\conc n$, all blocks defined later will reside at children of~$\alpha$ that lie to the right of $\alpha\conc n$, namely, children $\alpha\conc m$ for various $m>n$. When the $k\tth$ block is dissolved (at step~(4) above), it is now fine for $\alpha\conc n$ to pull all balls from the right, even those that were released to lower nodes, and use them to constitute a new $k\tth$ block.  

One can think of $\alpha\conc n$ as representing the guess that $\domin$ starts majorising $f^{\alpha,\rho}$ from input~$n$. This is not exactly right but the intuition is not far off. 

Another point to make is that for distinct~$\rho$, the function $\domin$ may start dominating the various functions $f^{\alpha,\rho}$ at different locations. The argument above will show that for each $\rho\in \{0,1\}^e$, there is some child~$\beta$ that releases $\rho$-blocks infinitely often. However, to obtain a child of~$\alpha$ on the true path, we need the \emph{same~$\beta$} to work for all~$\rho$. Thus, in the definition of certification below, we will let a child~$\beta$ hold on to its blocks, until all of them (one for each~$\rho$) are certified, and only then will it release them all. 

\smallskip

We can now give the details. Let $\alpha$ be a parent $e$-splitting node; let $\rho\in \{0,1\}^e$. As discussed, We will define a function $f^{\alpha,\rho}$, with intended oracle~$A$. So at various stages~$s$, for various~$k$, we may have $f^{\alpha,\rho}_s(k)\converge$ or not, and if it is defined, then we will declare a use $u = u^{\alpha,\rho}_s(k)$ for this computation. The usual rule applies: if $f^{\alpha,\rho}_s(k)\converge$, with use~$u$, and $A_s\rest{u} =A_{s+1}\rest{u}$, then $f^{\alpha,\rho}_{s+1}(k)\converge = f^{\alpha,\rho}_s(k)$ with the same use. If, on the other hand, $A_s\rest{u}  \ne A_{s+1}\rest{u}$, then we declare that $f^{\alpha,\rho}_{s+1}(k)\diverge$. Note that this will only happen if~$s$ is a new balls stage, and during such stages, we do not define new computations $f^{\alpha,\rho}_{s+1}(k)$. 

We will define $f^{\alpha,\rho}_s(k)$ only for $k\ge 1$, since there is no point in dealing with a block of size~0.\footnote{And it is notationally simpler than requiring the $k\tth$ block to have size $2k+2$.}

We will ensure that if $k\ge 1$ and $f^{\alpha,\rho}_s(k+1)\converge$ then $f^{\alpha,\rho}_s(k)\converge$ as well, that is, the domain of $f^{\alpha,\rho}_s$ is a (finite) initial segment of $\Nat\setminus\{0\}$. See \cref{lem:atomless:block_location} below. 

When we define a computation $f^{\alpha,\rho}_r(k)$ (at some stage $r-1$), then we will declare that the $k\tth$ block is currently \emph{waiting} at some child~$\beta$ of~$\alpha$; we will record this by setting $k^{\rho}_s(\beta) = k$ (note that $\alpha$ is determined by~$\beta$). This notation implies that at most one $\rho$-block is waiting at $\beta$ at a given stage. 

If  $k^{\rho}_s(\beta)\converge=k$ then $f^{\alpha,\rho}_s(k)\converge$. Thus, if $k^{\rho}_s(\beta)\converge=k$ but $f^{\alpha,\rho}_{s+1}(k)\diverge$ then we declare that $k^{\rho}_{s+1}(\beta)\diverge$. That is, if the $k\tth$ block is waiting at~$\beta$ at (the beginning of) stage~$s$, but is dissolved when passing from stage~$s$ to stage $s+1$, then no block is waiting at~$\beta$ at stage $s+1$. 

If $\beta$ \emph{releases} balls at stage~$s$, then we will also set $k^{\alpha,\rho}_{s+1}(\beta)\diverge$, even though $f^{\alpha,\rho}_{s+1}(k)\converge$; at stage $s+1$, the block does not wait at~$\beta$  anymore.   

We will ensure that if $k^{\rho}_s(\beta)\converge$, then for every child $\gamma$ of~$\alpha$ that lies to the left of~$\beta$, we have $k^{\rho}_s(\gamma)\converge$ as well (and $k^{\rho}_s(\gamma)<k^{\rho}_s(\beta)$). Again, see \cref{lem:atomless:block_location}.

\begin{definition} \label{def:atomless:certified}
	Let~$\alpha$ be a parent $e$-splitting node, let $\rho\in \{0,1\}^{e}$, and let~$\beta$ be a child of~$\alpha$.  Let~$s$ be a stage. 

  \begin{sublemma}
  \item A ball $x$ is \emph{pullable} by $(\beta,\rho)$ at stage~$s$ if:
    \begin{orderedlist}
      \item $x\in \Y{\alpha,\rho}{s}\setminus \Y{\le\!\beta,\rho}{s}$; and
      
      \item \label{item:atomless:def_certified:pullable:nothing_waiting} 
      $k^\rho_s(\beta)\diverge$ (that is, no $\rho$-block is waiting at~$\beta$ at stage~$s$). 
    \end{orderedlist}

    \item Let $k\ge 1$. We say that $(\beta,\rho,k)$ is \emph{ready for definition} at stage~$s$ if:
    \begin{orderedlist}[resume]
      \item \label{item:atomless:def_certified:ready:k-1}
      $f^{\alpha,\rho}_s(k)\diverge$, and $f^{\alpha,\rho}_s(k-1)\converge$; 
      
      \item \label{item:atomless:def_certified:ready:node_empty}
      $k^{\rho}_s(\beta)\diverge$; 
      and
      
      \item \label{item:atomless:def_certified:ready:2k_balls_exist} 
      Letting $v = u^{\alpha,\rho}_s(k-1)$ if $k>1$, $v=0$ otherwise, 
      $\SSS{\beta,\rho}{s}$ contains at least $2k$ many balls greater than~$v$. 

    \end{orderedlist}

    \item \label{item:atomless:def_certified:ready:certified}
    We say that~$\beta$ is \emph{certified} at stage~$s$, if for all $\rho\in \{0,1\}^e$:
     \begin{orderedlist}[resume]
        \item $k^\rho_s(\beta)\converge$; and 
        \item $\domin_s(k)> f^{\alpha,\rho}_s(k)$, where $k=k^\rho_s(\beta)$. 
    \end{orderedlist}
  \end{sublemma}
  \end{definition}



\begin{lemma} \label{lem:atomless:there_is_a_strongest_pulling_node}
  Suppose that $x$ is pullable by some node at stage~$s$. Then:
  \begin{sublemma}
    \item For each node~$\beta$, there is at most one label~$\rho$ such that $x$ is pullable by $(\beta,\rho)$. 
    \item There is a $\le$-least node~$\beta$ such that $x$ is pullable by $(\beta,\rho)$ for some~$\rho$.
  \end{sublemma}
\end{lemma}

\begin{proof}
  For~(a), suppose that~$x$ is pullable by $(\beta,\rho)$. Then either $\beta$ is a child of an $e$-decision node or a child $e$-splitting node, $|\rho|=e$, and $x\in \Y{\beta^-,\rho}$. So~$\rho$ is the initial segment of length~$e$ of $x$'s label at stage~$s$. 

  (b) is as in \cref{lem:maximal:leftmost_pullable_node}. 
\end{proof}

\begin{lemma} \label{lem:atomless:uniqueness_of_ready_for_definition}
  Let $\alpha$ be a parent $e$-splitting node, and let $\rho\in \{0,1\}^e$. Let~$s$ be a stage. Suppose that no balls are pullable by any node at stage~$s$. Then there is at most one child~$\beta$ of $\alpha$ and one~$k$ such that $(\beta,\rho,k)$ is ready for definition at stage~$s$. 
\end{lemma}

\begin{proof}
 The number~$k$ is unique by \ref{item:atomless:def_certified:ready:k-1} of \cref{def:atomless:certified}. Say $\beta<\beta'$ are children of~$\alpha$. If $k^{\rho}_s(\beta)\diverge$, then every $x\in \Y{\beta',\rho}{s}$ is pullable by $(\beta,\rho)$; so by assumption on~$s$, $\Y{\beta',\rho}{s} =\emptyset$, whence $(\beta',\rho,k)$ cannot be ready for definition at~$s$ by \ref{item:atomless:def_certified:ready:2k_balls_exist}. 
\end{proof}

\medskip

\subsection{Construction} 

We start with $H_0 =\emptyset$ and no ball on the machine. 

\smallskip

At stage~$s$ we operate according to the first case that applies. 

\begin{sublemma}
	\item \label{item:atomless:construction:grabbing}
	There is a ball on the machine which is pullable by some $(\beta,\rho)$. For each such ball~$x$, let~$\beta$ be the $\le$-least such. We move $x$ to reside at~$\beta$ at stage $s+1$, and we set $x$'s label at stage $s+1$ to be~$\rho$. 


	\item \label{item:atomless:construction:eliminating}
	There is a ball~$x$ which is eliminable by some pair $(\beta,\rho)$. We enumerate all such~$x$ into~$H_{s+1}$ and remove them from the machine. 

  \item \label{item:atomless:construction:releasing}
  There is some child $e$-splitting node that is certified at stage~$s$. In this case, for each such~$\beta$ for which there is no $\Left{\gamma}{\beta}$ that is also certified at~$s$, for each $\rho\in \{0,1\}^e$, letting $k = k^\rho_s(\beta)$, we:
    \begin{itemize}
      \item Move all $x\in\SSS{\beta,\rho}{s}$
      to the unique child~$\beta^+$ of~$\beta$;
      \item For all children $\gamma\ge \beta$ of~$\alpha$, we set $k^{\rho}_{s+1}(\gamma)\diverge$; 
      \item For the $k$-least many balls $x\in\SSS{\beta,\rho}{s}$, we change their label to $\rho\conc 0$; for all other balls just moved to~$\beta^+$, we change the label to $\rho\conc 1$. 
    \end{itemize}

	\item \label{item:atomless:construction:creating_X}
	There is some child $e$-splitting node~$\beta$, some $\rho\in \{0,1\}^e$, and some $k\ge 1$, such that $(\beta,\rho,k)$ is {ready for definition} at stage~$s$. 

  For each such pair $(\beta,\rho)$, for the unique such~$k$, letting $\alpha = \beta^-$, we define 
   \[
   f^{\alpha,\rho}_{s+1}(k) = \domin_s(k)+1. 
   \]
  Let $v = u^{\alpha,\rho}_s(k-1)$ if $k>1$, $v=0$ otherwise. Let $x_1,x_2,\dots,$ enumerate (in order) the elements of $\SSS{\beta,\rho}{s}$ that are greater than~$v$. We let the $A_{s}$-use $u^{\alpha,\rho}_{s+1}(k)$ of the new computation be $x_{2k}+1$. 

  We set $k^{\rho}_{s+1}(\beta) = k$.

	\item If no case above applies, then $s$ is declared to be a new balls stage. 
	\begin{orderedlist}
		\item We let $H_{s+1} = H_s\cup A_{s+1}$, and remove any $x\in A_{s+1}$ from the machine. 
		\item We place any $x\in Q_{s+1}\setminus H_{s+1}$ not already on the machine at the root of the machine, and give it the empty string as a label. 
    \item For any $e$-splitting child node~$\beta$, a child of some~$\alpha$, if $k^\rho_s(\beta)\converge = k$ but $f^{\alpha,\rho}_{s+1}(k)\diverge$, then $k^\rho_{s+1}(\beta)\diverge$. 
	\end{orderedlist}
\end{sublemma}

\medskip

\subsection{Verification} 

We start with some observations on ball movements and labels, that follow from the instructions. 

\begin{lemma} \label{lem:atomless:basic_label_facts}
  Let $\alpha$ be a node, $s$ be a stage, and suppose that $x\in \SSS{\alpha}{s}$. 
  \begin{sublemma}
    \item If $t>s$ and $x\in \SSS{\beta}{t}$, then $\beta\le \alpha$ (in the Kleene-Brouwer ordering).
    \item If $t>s$ and $x\in \SSS{\alpha}{t}$ (that is, $x$ has not moved between stages~$s$ and~$t$), then $x$'s label at stage~$s$ is the same as $x$'s label at stage~$t$. 
    \item If $\alpha$ is an $e$-decision or $e$-splitting node (parent or child), then $x$'s label at stage~$s$ has length~$e$. 
    \item If $t>s$ and $x\in \Y{\alpha}{t}$ then $x$'s label at stage~$t$ extends $x$'s label at stage~$s$. 
  \end{sublemma}
\end{lemma}

The proof of \cref{lem:maximal:Y_subset_of_Q} gives:

\begin{lemma} 
	For all~$s$, $\Y{\emptystring}{s} =  Q_s\setminus H_s$. 
\end{lemma}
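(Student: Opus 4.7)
The plan is to prove this by straightforward induction on $s$, imitating the proof of \cref{lem:maximal:Y_subset_of_Q}. Since every node on the priority tree is an extension of $\emptystring$ and every label in $2^{<\w}$ is an extension of $\emptystring$, the set $\Y{\emptystring,\emptystring}{s}$ is simply the collection of all balls on the machine at the beginning of stage $s$. The base case $s=0$ is immediate: the machine starts empty and $Q_0 = H_0 = \emptyset$.

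For the inductive step, I would catalogue how each of the five cases of the construction affects the two sides of the claimed equation. Cases \ref{item:atomless:construction:grabbing}, \ref{item:atomless:construction:releasing}, and \ref{item:atomless:construction:creating_X} merely shuffle balls between nodes, relabel them, or update the bookkeeping of the $f^{\alpha,\rho}$ functions; none of these actions moves a ball off the machine or onto it, and none alters $Q_s$ or $H_s$. Case \ref{item:atomless:construction:eliminating} removes each eliminated $x$ from the machine while simultaneously enumerating $x$ into $H_{s+1}$, so both sides lose $x$ together. At a new balls stage, the changes are synchronised: each $x \in Q_s \cap A_{s+1}$ enters $H_{s+1}$ and is removed from the machine, while each new $x \in Q_{s+1}\setminus H_{s+1}$ not already on the machine is placed at the root. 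In all cases, the inductive hypothesis is preserved.

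The argument is essentially bookkeeping, and I do not expect a real obstacle: the more elaborate splitting machinery (blocks, intervals, certification, children of splitting nodes) is irrelevant here because none of it transports balls between ``on the machine'' and ``off the machine''. The only potential pitfall is overlooking the simultaneous relabelings that accompany pulling in case \ref{item:atomless:construction:grabbing} and release in case \ref{item:atomless:construction:releasing}, but every such relabel produces a string extending $\emptystring$ and so does not affect membership in $\Y{\emptystring,\emptystring}{s+1}$.
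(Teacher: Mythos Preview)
Your proposal is correct and takes essentially the same approach as the paper, which simply says that the proof of \cref{lem:maximal:Y_subset_of_Q} carries over. Your case analysis of the five construction steps is more detailed than the paper's terse induction, but the underlying idea is identical.
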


\begin{lemma} \label{lem:bound_on_ball_size}
  If a ball~$x$ resides at a node~$\alpha$ at some stage, then $|\alpha| \le x+2$. 
\end{lemma}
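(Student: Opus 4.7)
The plan is to isolate the real content of the lemma as a sub-claim about entry into subtrees rooted at parent splitting nodes, and then read off the bound from the three-fold layering of the tree.

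\emph{Sub-claim.} For every parent splitting node $\zeta$ and every stage $s$, if a ball $x$ resides at some node $\alpha' \succeq \zeta$ at stage $s$, then $x > |\zeta|$.

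I would prove this by induction on $s$. If $x$ was already at $\alpha'$ at stage $s-1$, apply the inductive hypothesis; if $x$ was freshly placed at the root at stage $s-1$, then $\alpha' = \emptystring \not\succeq \zeta$. The remaining case is when $x$ is moved to $\alpha'$ at stage $s-1$ by one of the ball-moving actions --- a pull via \cref{def:atomless:decision_pullable}, \cref{def:atomless:certified}(a), or \cref{def:atomless:decision_pulling}, or a release in item~(c) of the construction --- each of which moves $x$ from a node at or below the parent of $\alpha'$. Assuming the move is the first entry of $x$ into the subtree rooted at $\zeta$, the parent of $\alpha'$ is not $\succeq \zeta$. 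But if $\alpha' \succ \zeta$, then the parent of $\alpha'$ is still $\succeq \zeta$, since any strict ancestor of a node is also an ancestor of its parent. Hence $\alpha' = \zeta$. Since $\zeta$ is a parent splitting node, the move must be the pull of \cref{def:atomless:decision_pullable} (the only pull whose target can be a parent splitting node), and clause~\ref{item:atomless:pull:size} of that definition gives $x > |\zeta|$.

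\emph{Deduction of the lemma.} If $\alpha = \emptystring$ then $|\alpha| = 0 \le x + 2$. Otherwise $\alpha$ has a unique closest parent-splitting ancestor $\zeta$, possibly $\zeta = \alpha$; and the three-fold layering of the tree (decision, parent splitting, child splitting, decision, \ldots) yields $|\alpha| - |\zeta| \in \{0,1,2\}$ according as $\alpha$ is parent splitting, child splitting, or a non-root decision node. The sub-claim gives $|\zeta| \le x - 1$, so $|\alpha| \le x + 1 \le x + 2$.

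The main subtlety is packaged into the sub-claim: the pulls of \cref{def:atomless:decision_pullable} and \cref{def:atomless:certified}(a) can reach down into the grandparent's subtree and haul balls upward, and releases slide balls deeper without any explicit size test. One must therefore check that, despite these liberal rules, the only way a ball $x$ can ever land inside the subtree rooted at a parent splitting node $\zeta$ is by being pulled directly to $\zeta$, where the strict inequality $x > |\zeta|$ is enforced; the $+2$ slack in the bound then just absorbs the descent from $\zeta$ through the child splitting node to the next decision node.
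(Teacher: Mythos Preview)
Your proof is correct and follows essentially the same route as the paper's: both identify that the only way a ball enters the subtree below a child of a decision node is via the pull of \cref{def:atomless:decision_pullable}, whose clause~\ref{item:atomless:pull:size} enforces the size bound, and then use the three-level spacing between successive decision nodes to absorb the descent. The paper anchors at the longest decision node $\epsilon\prec\alpha$ and simply asserts that $x$ was pulled by $\epsilon$'s child, whereas you anchor at the parent splitting node $\zeta$ (which is exactly that child) and spell out by induction on stages why the first entry into the subtree must land at $\zeta$; this makes your argument more explicit and in fact yields the slightly sharper $|\alpha|\le x+1$, but the idea is the same.
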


\begin{proof}
  If $\alpha$ is the root then this is immediate. Otherwise, let $\eta\preceq \alpha$ be the longest node which is the child of a decision node. Each node only receives balls that have already passed through their parent. Hence, there is a stage at which~$x$ was pulled by~$\eta$. By \cref{def:atomless:decision_pullable}\ref{item:atomless:pull:size}, $x>|\eta|$; and $|\alpha|\le |\eta|+2$. 
\end{proof}

By \cref{lem:atomless:basic_label_facts}(a) and \cref{lem:bound_on_ball_size}, as in the proof of  \cref{lem:maximal:ball_movement_finite_part_1}, we get:

\begin{lemma}
	If $x\in \bigcup_s \Y{\emptystring}{s}$ and $x\notin H$ then~$x$ is a permanent resident of some node.
\end{lemma}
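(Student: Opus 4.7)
The plan is to adapt the proof of \cref{lem:maximal:ball_movement_finite_part_1} to the atomless setting, relying on the fact that ball residence still moves monotonically downward in the Kleene-Brouwer ordering on the tree.

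Since balls are removed from the machine only when enumerated into~$H$ (in cases~(b) and~(e)(i) of the construction), the assumption $x\notin H$ implies that $x\in \Y{\emptystring,\emptystring}{s}$ for all but finitely many~$s$. By \cref{lem:bound_on_ball_size}, whenever $x$ resides at some node~$\alpha$, $|\alpha|\le x+2$. So the possible residences of~$x$ lie in the subtree $T_x$ of all nodes of length at most $x+2$. Although splitting nodes have infinite branching, $T_x$ is well-founded since every branch through it has length at most $x+2$; hence the Kleene-Brouwer ordering on $T_x$ is a well-order.

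It remains to verify that whenever $x$'s residence changes, the new location is strictly smaller than the old in the Kleene-Brouwer ordering. Residence changes occur only in cases~(a) and~(c) of the construction. Case~(c) moves balls from a child splitting node~$\beta$ to its unique child~$\beta^+$, a proper extension of~$\beta$ and hence a Kleene-Brouwer descent. In case~(a), when~$x$ is pulled to a node~$\beta$, each of the three pullability definitions (\cref{def:atomless:decision_pullable}, \cref{def:atomless:certified}, and \cref{def:atomless:decision_pulling}) forces the previous residence~$\delta$ of~$x$ to satisfy $\delta\succeq \beta^-$ and $\delta\notin \Y{\le\!\beta,\rho}{s}$ for the relevant~$\rho$. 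Thus either $\delta = \beta^-$, making the move a descent from parent to child, or $\delta$ lies below a sibling of~$\beta$ that sits to the right of~$\beta$, giving a leftward descent. In all cases, Kleene-Brouwer strictly decreases.

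Well-foundedness of Kleene-Brouwer on~$T_x$ then forces $x$'s residence to stabilise, so~$x$ is a permanent resident of some node. The only work is the verification that every pull is a Kleene-Brouwer descent — essentially the observation that each pulling node~$\beta$ only reaches balls residing at its parent or in the subtrees rooted at right-siblings of~$\beta$ — which is routine once the pullability definitions are unwound.
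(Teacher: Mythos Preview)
Your argument is correct and is exactly the approach the paper takes: it simply says that, using \cref{lem:bound_on_ball_size}, the proof of \cref{lem:maximal:ball_movement_finite_part_1} goes through, i.e., the bound $|\alpha|\le x+2$ confines the residence of~$x$ to a well-founded subtree on which the Kleene--Brouwer ordering is a well-order, and every ball movement (pulling in case~(a), releasing in case~(c)) is a strict Kleene--Brouwer descent. One small typo: where you write ``$\delta\notin \Y{\le\!\beta,\rho}{s}$'' you mean ``$x\notin \Y{\le\!\beta,\rho}{s}$'' (and for \cref{def:atomless:decision_pulling} the residence is directly $\delta=\beta^-$), but the conclusion is unaffected.
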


\begin{lemma} \label{lem:atomless:infinitely_many_new_balls}
  There are infinitely many new ball stages. 
\end{lemma}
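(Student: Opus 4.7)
The plan is to show that if we fix a stage $s_0$ which is either $0$ or a new balls stage, then only finitely many stages can pass before the next new balls stage occurs. By the construction, between two consecutive new ball stages, no balls are added to the machine: case (b) only removes balls, and cases (a), (c), (d) only move balls or create new bookkeeping. So the set $F$ of balls on the machine at stage $s_0+1$ is a fixed finite set, from which case (b) can only delete. I would then bound the number of stages $\ge s_0$ that can execute each of cases (a)--(d), showing each is finite.

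For cases (a) and (c): by \cref{lem:bound_on_ball_size}, every ball $x\in F$ can only reside at nodes of length at most $x+2$. Case (a) moves a ball either strictly to the left on the tree or from a parent down to one of its children (since the pulling definitions require $x\in\Y{\alpha^-,\rho}{s}\setminus \Y{\le\!\alpha,\rho}{s}$), and case (c) moves a ball from $\beta$ strictly down to its child $\beta^+$. In both cases the ball's position drops strictly in the Kleene--Brouwer order on the (finite, well-founded) subtree of nodes of length $\le x+2$. Hence each ball moves only finitely often, and since each firing of (a) or (c) moves at least one ball, the total number of such firings is finite. Case (b) fires at most $|F|$ times, since each firing eliminates at least one ball.

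The interesting step is bounding case (d). Each firing of (d) produces a fresh definition of some $f^{\alpha,\rho}(k)$ that was previously divergent. Since no $A$-change occurs between new balls stages, once $f^{\alpha,\rho}(k)$ is declared convergent it remains so; hence each triple $(\alpha,\rho,k)$ can be defined at most once in this interval. Now, ready-for-definition requires $\SSS{\beta,\rho}{s}$ to contain at least $2k$ balls, so $k\le |F|/2$; the host node $\beta$ must lie at length at most $\max(F)+2$ by \cref{lem:bound_on_ball_size}; and $|\rho|$ is bounded in terms of $|\beta|$. Hence only finitely many $(\alpha,\rho,k)$ triples can ever be ready for definition, so case (d) fires only finitely many times.

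Adding the four bounds together, there is some stage $s\ge s_0$ at which none of cases (a)--(d) is applicable; by the construction, such a stage is declared a new balls stage, giving the next one. Iterating this yields infinitely many new balls stages. The only place that needs mild care is ensuring that the definition clause of case (a) (which may also redefine some $\beta^{\alpha,\rho}_{s+1}(k)$) still counts as a genuine ball movement when firing, which it does because case (a) only triggers when some $x$ is actually pullable and is then moved to its new location.
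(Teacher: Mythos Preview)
Your argument is correct and follows essentially the same approach as the paper: both bound ball movement via Kleene--Brouwer descent on a finite subtree, observe that eliminations are bounded by the number of balls, and argue that case~(d) fires only finitely often because no $A$-change occurs between new balls stages, so each triple $(\alpha,\rho,k)$ is defined at most once. The paper frames this as a proof by contradiction (assume a last new balls stage, find $s_1$ after which nothing moves, then bound~(d)), whereas you give a direct count of firings of each case between consecutive new balls stages; your version is in fact more explicit, particularly in bounding the set of relevant triples for case~(d).
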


\begin{proof}
  The point is that if not, then case~(d) of the construction (defining new blocks) can happen at most finitely often. In more detail: suppose, for a contradiction, that $s_0$ is the last new balls stage. There is some stage $s_1>s_0$ after which no balls are moved on the machine, or eliminated from the machine. If a new computation $f^{\alpha,\rho}_{s+1}(k)$ is defined at some stage $s>s_1$, then this computation is never made undefined. For every child~$\beta$ of~$\alpha$, for all $t>s$, if $\SSS{\beta,\rho}{t}\ne \emptyset$ then $k^\rho_t(\beta)\converge$ (at stage~$s$, the fact that a new computation is defined, implies that there are no children of~$\alpha$ which have balls, but insufficiently many to form a block: such balls would be pulled away earlier than stage~$s$). Hence, no further values of $f^{\alpha,\rho}$ will be defined after stage~$s$. 
\end{proof}

Since every element of $\outside{A}$ is added to the root of the tree at some stage, it follows that every $x\in \outside{H}$ is a permanent resident of some node.

\subsubsection{Defining the true path}

We define the true path inductively. The root $\emptystring$ is declared to lie on the true path. Let $\alpha$ be a node on the true path. 
\begin{itemize}
  \item If $\alpha$ is a decision node, then the leftmost child $\beta$ of~$\alpha$ with $\seqell{\beta}$ unbounded is on the true path.
  \item If $\alpha$ is a parent splitting node, then the leftmost child~$\beta$ of~$\alpha$ that releases balls infinitiely often, lies on the true path. 
  \item If $\alpha$ is a child splitting node, then its unique child is also on the true path. 
\end{itemize}
As above, if $\alpha$ is a decision node, then as one of the statements $\psi(\alpha,D)$ is true, one of $\alpha$'s children will lie on the true path. On the other hand, we will need to work to show that if $\alpha$ is a parent splitting node on the true path, then one of its children lies on the true path. For now, we do not know that the true path is infinite. 

In fact, to show that a parent splitting node~$\alpha$ which lies on the true path has a child on the true path, we will need to already know that the analogues of \cref{lem:maximal:left_of_true_path,lem:maximal:main_true_path_lemma} hold for~$\alpha$. Thus, we will need to prove both by simultaneous induction.

Recall (\cref{def:Q_and_C}) that we let
  \[
    C_s = \set{x\in Q_s}{A_s\rest{x+1} = A\rest{x+1}},
  \]
  the collection of balls $x\in Q_s$ which are $A$-true at stage~$s$. 

\begin{proposition} \label{prop:atomless:main_true_path_lemma}
  Suppose that a node~$\alpha$ lies on the true path. 
  \begin{sublemma}
  \item  There is a stage after which for every node $\gamma$ that lies to the left of~$\alpha$, $\gamma$ does not eliminate any balls from $\outside{A}$, nor is any ball from $\outside{A}$ moved to~$\gamma$.\footnote{Balls can move to~$\gamma$ without being pulled by~$\gamma$, if $\gamma$ is the child of a child splitting node, and balls are released by $\gamma^-$.}
  \item  Suppose that~$\alpha$ is an $e$-splitting or decision node. Then for all $\rho\in \{0,1\}^e$, for every $k$, there is some stage~$s$ such that $|\Y{\alpha,\rho}_s\cap C_s|\ge k$. 
  \end{sublemma}
\end{proposition}

The proof of \cref{prop:atomless:main_true_path_lemma} will take some work. As discussed, we prove it by induction on the length of~$\alpha$. For now, we note:

\begin{lemma} \label{lem:atomless:finitely_many_permanent_balls_to_the_left}
  Let~$\alpha$ be a node; 
  suppose that there is a stage after which no ball is moved to a node $\Left{\gamma}{\alpha}$. Then $\Y{<_{L}\!\alpha}$ is finite. 
\end{lemma}

\begin{proof}
  Every ball in $\Y{<_{L}\!\alpha}$ is in $\outside{A}$, and is at some stage moved to  a node~$\gamma$ that lies to the left of~$\alpha$. By assumption, there are only finitely many such stages, and at each stage, only finitely many balls are on the machine. 
\end{proof}

\subsubsection{Case I: the root}

We start the inductive verification of \cref{prop:atomless:main_true_path_lemma} by verifying that it holds for the root. (a) is vacuous in this case;  (b) is as in \cref{lem:maximal:main_true_path_lemma}. We note that this is the only part of the proof in which we use the fact that we are using the true-stage enumeration of~$A$.

\subsubsection{Case II: decision nodes}

Suppose that $\alpha$ is an $e$-decision node that lies on the true path, and suppose that \cref{prop:atomless:main_true_path_lemma} holds for~$\alpha$. Let~$\beta$ be $\alpha$'s child that lies on the true path; we show that \cref{prop:atomless:main_true_path_lemma} holds for~$\beta$ as well. Let $D\subseteq \{0,1\}^e$ such that $\beta = \alpha\conc D_n$ for some~$n$. That is, $\rho\in D$ if and only if for all~$k$ there is some~$s$ such that $|\Y{\alpha,\rho}{s}\cap W_{e,s}\cap C_s|\ge k$.

\medskip

To show that~(a) holds for~$\beta$, we only need a minor modification of the proof \cref{lem:maximal:left_of_true_path}, reflecting the new second part of \cref{def:atomless:decision_pullable}\ref{item:atomless:pull:new}.

Let $l$ be the maximum of $\lim_s \ell(\gamma)_s$, where~$\gamma$ is a child of~$\alpha$ that lies to the left of~$\beta$. Again, for any such~$\gamma$ and any $\epsilon\succeq \gamma$ that is a child of some decision node, $\lim_s \ell(\epsilon)_s \le l$. By \cref{def:atomless:decision_pullable}(vii), no such~$\epsilon$ ever eliminates any $x\ge l$. 

Let $\gamma$ be a child of~$\alpha$ that lies to the left of~$\beta$, and suppose (by induction) that every child $\delta$ of~$\alpha$ that lies to the left of~$\gamma$ eventually stops pulling any $x\in \outside{A}$ (and so, from some stage, no ball from $\outside{A}$ is moved to any node that lies to the left of~$\gamma$). 

Let $\rho\in \{0,1\}^e$. For a contradiction, suppose that $(\gamma,\rho)$ pulls infinitely many balls from $\outside{A}$. By our assumption, almost all such balls remain in $\Y{\gamma}$. By \cref{lem:atomless:basic_label_facts}(d), the balls pulled by $(\gamma,\rho)$ that remain in $\Y{\gamma}$ also remain in $\Y{\gamma,\rho}$. Let $s$ be a late stage at which $\Y{\gamma,\rho}{s}$ contains at least $l$ many elements of $\outside{A}$; these balls will not be eliminated, so for all $t\ge s$, $|\Y{\gamma,\rho}{t}|\ge l$. Let $a = \max \Y{\gamma,\rho}{s}$. Then by induction on $t\ge s$ we show that $a = \max \Y{\gamma,\rho}{t}$: if this holds for~$t$, then by \ref{item:atomless:pull:new} of \cref{def:atomless:decision_pullable}, the only numbers pulled by $(\gamma,\rho)$ are $<a$. 

\medskip

The proof that~(b) holds for~$\beta$ is similar to that of \cref{lem:maximal:main_true_path_lemma}, but here we use the new \ref{item:atomless:pull:new} of \cref{def:atomless:decision_pullable}. Fix $\rho\in \{0,1\}^e$. By~(a), let~$s_0$ be a stage after which no ball from $\outside{A}$ is moved to, or eliminated by, any node $\Left{\gamma}{\beta}$.

By \cref{lem:atomless:finitely_many_permanent_balls_to_the_left}, $\Y{<_{L}\!\beta,\rho}$ is finite; let $m = |\Y{<_{L}\!\beta,\rho}|$. If $s>s_0$ and $x\in \Y{<_{L}\!\beta}{s}\cap C_s$, then as~$x$ will not be later eliminated (and $x\notin A$), $x\in \Y{<_{L}\!\beta}$. By \cref{lem:atomless:basic_label_facts}(b), for all $s>s_0$, $\Y{<_{L}\!\beta,\rho}{s}\cap C_s \subseteq \Y{<_{L}\!\beta,\rho}$. Hence, for all $s>s_0$, $|\Y{<_{L}\!\beta,\rho}{s}\cap C_s| \le m$. 

Let $k\in \Nat$. Suppose that $\rho\notin D$. By induction, there is some $s>s_0$ such that $|\Y{\alpha,\rho}{s}\cap C_s| \ge k+m+3$ and such that $\oneell{\beta}{s}>k$. Let
\[
  R = \left\{ x\in C_s\cap \Y{\alpha,\rho}{s} \,:\,  x\notin \Y{\le\!\beta,\rho}{s}\andd  x>|\alpha|\right\}. 
\]
Since $x\ge |\alpha|-2$ for all $x\in \Y{\alpha}{s}$ (\cref{lem:bound_on_ball_size}), $|R|\ge k - |\Y{\beta,\rho}{s}\cap C_s|$. So as in the maximal set construction, we will be done once we show that either $|\Y{\beta,\rho}{s}\cap C_s|\ge k$, or that every $x\in R$ is pullable by $(\beta,\rho)$ at stage~$s$ (note that in the latter case, $s$ will not be a new balls stage, so $C_s = C_{s+1}$). Suppose that $|\Y{\beta,\rho}{s}\cap C_s|< k$. The new part is that it is possible, in this case, that  $|\Y{\beta,\rho}{s}|\ge \oneell{\beta}{s}$. If $|\Y{\beta,\rho}{s}|< \oneell{\beta}{s}$ then certainly all balls in~$R$ are pullable. If not, though, since $\oneell{\beta}{s}>k$, there must be some $z\in \Y{\beta,\rho}{s}$ which is not in~$C_s$. However, $C_s\setminus H_s$ is an initial segment of the balls on the machine at stage~$s$. So all balls in~$R$ are smaller than~$z$. By the new part of \cref{def:atomless:decision_pullable}\ref{item:atomless:pull:new}, all balls in~$R$ are pullable by $(\beta,\rho)$ at stage~$s$. 

The case $\rho\in D$ is the same, using the fact that as $\psi(\alpha,D)$ holds; there is some $s>s_0$ such that $\oneell{\beta}{s}>k$, and $|\Y{\alpha,\rho}{s}\cap C_s\cap W_{e,s}| \ge k+m+3$. This completes the proof that \cref{prop:atomless:main_true_path_lemma} holds for~$\beta$.

\subsubsection{Case III: splitting nodes}

For the rest of the proof of \cref{prop:atomless:main_true_path_lemma}, let~$\alpha$ be a parent $e$-splitting node that lies on the true path, and suppose that the proposition holds for~$\alpha$. We show that~$\alpha$ has a child~$\beta$ on the true path, and that the proposition holds for the unique child~$\beta^+$ of~$\beta$ (and hence also for~$\beta$). 

Until the end of the proof of \cref{prop:atomless:main_true_path_lemma}, we fix a stage~$s_0$ witnessing that \cref{prop:atomless:main_true_path_lemma}(a) holds for~$\alpha$.

\begin{lemma} \label{lem:atomless:block_location}
  Let $\rho \in \{0,1\}^e$; let~$s$ be a stage. 
  \begin{sublemma}
  \item \label{item:lem:atomless:block_location:k-1}
  Let $k> 1$.  If $f^{\alpha,\rho}_s(k)\converge$ then  $f^{\alpha,\rho}_{s}(k-1)\converge$. 

  \item \label{item:lem:atomless:block_location:k_then_f}
  If $\beta$ is a child of~$\alpha$ and $k^\rho_s(\beta)\converge$, then $f^{\alpha,\rho}_s(k^\rho_s(\beta))\converge$. 

  \item \label{item:lem:atomless:block_location:beta_left}
  If $\beta$ is a child of~$\alpha$ and $k^\rho_s(\beta)\converge$, then for every child $\gamma<\beta$ of~$\alpha$, $k^\rho_s(\gamma)\converge$ and $k^\rho_s(\gamma) < k^\rho_s(\beta)$. 
  \end{sublemma}
\end{lemma}

\begin{proof}
  \ref{item:lem:atomless:block_location:k-1} follows from the fact that when we define a new computation $f^{\alpha,\rho}_{s+1}(k)$, we have $f^{\alpha,\rho}_s(k-1)\converge$, and we set $u^{\alpha,\rho}_{s+1}(k)>u^{\alpha,\rho}_{s}(k-1)$. Also, we note that such a stage~$s$ is not a new balls stage, so $A_s=A_{s+1}$, so $u^{\alpha,\rho}_{s}(k-1) = u^{\alpha,\rho}_{s+1}(k-1)$. 

  \ref{item:lem:atomless:block_location:k_then_f} is by our stipulation that when an $A$-change causes a computation $f^{\alpha,\rho}_s(k)$ to become undefined at stage $s+1$, and $k = k^{\rho}_s(\beta)$, then we set $k^\rho_{s+1}(\beta)\diverge$. 

  \ref{item:lem:atomless:block_location:beta_left} follows from the previous two parts, and the fact that a new computation is always set up at the leftmost ``free'' child (a child with $k^\rho_s(\beta)\diverge$); see the proof of \cref{lem:atomless:uniqueness_of_ready_for_definition}.
\end{proof}

Recall that we say that a computation $f^{\alpha,\rho}_s(k)\converge$ is \emph{$A$-correct} if $A_s\rest{u} = A\rest{u}$, where $u=u^{\alpha,\rho}_s(k)$ is the use of the computation $f^{\alpha,\rho}_s(k)$. Also recall that $f^{\alpha,\rho}_s(k)$ is $A$-correct if and only if for all $t\ge s$, $f^{\alpha,\rho}_t(k)\converge$. 

\begin{lemma} \label{lem:atomless:block_location:block_size}
  Let~$\beta$ be a child of~$\alpha$; let $\rho\in \{0,1\}^e$; let~$s$ be a stage. Suppose that $k=k^\rho_s(\beta)\converge$, and suppose that the computation $f^{\alpha,\rho}_s(k)$ was defined after stage~$s_0$, and is $A$-correct. Then $\SSS{\beta,\rho}{s}$ contains $2k$ many balls $x< u^{\alpha,\rho}_s(k)$. 
\end{lemma}

\begin{proof}
  Let~$r$ be the stage at which the computation was defined. Let $X$ be the set of $x\in \SSS{\beta,\rho}{r}$ such that $x<u^{\alpha,\rho}_{r+1}(k)$. Then by the construction, $|X|\ge 2k$, and by assumption that the computation is $A$-correct, $X\subseteq \outside{A}$.  By induction on $t\in [r,s]$ we see that $X\subseteq \SSS{\beta,\rho}{t}$. Suppose that this holds for~$t$, and $t<s$. Since $t>s_0$, no ball from~$X$ is pulled by a node to the left of~$\alpha$. If $\gamma<\beta$ is a child of~$\alpha$, and any ball from~$X$ is moved to $\Y{\gamma}{t+1}$, then $k^\rho_{t+1}(\gamma)\diverge$; by \cref{lem:atomless:block_location}, $k^\rho_{t+1}(\beta)\diverge$. But then, since $f^{\alpha,\rho}_t(k)$ is $A$-correct, there is no stage $w>t$ at which $k^\rho_w(\beta)=k$, contradicting the hypothesis of this lemma. Hence, $X\subseteq \SSS{\beta}{t+1}$. 
\end{proof}

The following follows from the construction.

\begin{lemma} \label{lem:equivalences_of_stability}
  Let~$\beta$ be a child of~$\alpha$, and let $\rho\in \{0,1\}^e$. Let $s\ge s_0$ be a stage. 
  \begin{sublemma}
    \item Suppose that for all $t\ge s$, $k^\rho_t(\beta)\converge$. Then $f^{\alpha,\rho}_s(k^\rho_s(\beta))$ is $A$-correct, and for all $t\ge s$, $k^\rho_t(\beta)=k^\rho_s(\beta)$, and~$\beta$ does not release balls at stage~$t$. 

    \item Suppose that $s\ge s_0$, $k^\rho_s(\beta)\converge$, and $f^{\alpha,\rho}_s(k^\rho_s(\beta))$ is $A$-correct. Suppose, further, that for all children $\gamma\le \beta$ of~$\alpha$, for all $t\ge s$, $\gamma$ does not release balls at stage~$t$. Then for all $t\ge s$, $k^\rho_t(\beta)\converge$. 
  \end{sublemma}
\end{lemma}

\begin{definition} \label{def:k_beta_is_stable}
  Let~$\beta$ be a child of~$\alpha$, and let $\rho\in \{0,1\}^e$. We write $k^\rho(\beta)\converge$ if $k^\rho_s(\beta)\converge$ for all but finitely many~$s$. 
\end{definition}

The following is the main lemma.

\begin{lemma} \label{lem:atomless:splitting:main}
	Let~$\beta$ be a child of~$\alpha$, and suppose that for every every child $\gamma<\beta$ of~$\alpha$, for every $\rho\in \{0,1\}^e$,  $k^\rho(\gamma)\converge$. Then for all $\rho\in \{0,1\}^{e}$, there are infinitely many stages~$t$ such that $k^\rho_t(\beta)\converge$ and $f^{\alpha,\rho}_t(k^\rho_t(\beta))$ is $A$-correct. 
\end{lemma}

An immediate consequence is the following:

\begin{lemma} \label{lem:atomless:splitting:main:consequence}
  Let~$\beta$ be a child of~$\alpha$, and suppose that for every every child $\gamma<\beta$ of~$\alpha$, for every $\rho\in \{0,1\}^e$,  $k^\rho(\gamma)\converge$. Then either:
    \begin{equivalent}
  \item For all $\rho\in \{0,1\}^{e}$, $k^\rho(\beta)\converge$; or

  \item $\beta$ releases balls infinitely often, in fact, for all $\rho\in \{0,1\}^{e}$ there are infinitely many~$t$ such that $\beta$ releases balls at stage~$t$, and $f^{\alpha,\rho}_t(k^\rho_t(\beta))$ is $A$-correct. 
  \end{equivalent}
\end{lemma}

\begin{proof}
  Let~$s_1\ge s_0$ be a stage witnessing the assumption on the children $\gamma<\beta$ of~$\alpha$: for all $s\ge s_1$, for all $\rho\in \{0,1\}^e$, $k^\rho_s(\gamma)\converge$. 

  Let $\rho\in \{0,1\}^e$. Let $s\ge s_1$ be a stage such that $k^\rho_s(\beta)\converge$, and $f^{\alpha,\rho}_s(k^\rho_s(\beta))$ is $A$-correct. If $\beta$ does not release any balls after stage~$s$, then by \cref{lem:equivalences_of_stability}(b), stage~$s$ shows that $k^\rho(\beta)\converge$. So if $\beta$ releases balls only finitely many times, (1) holds for~$\beta$. 

  Suppose that $\beta$ releases balls infinitely often; let $t$ be the least stage $\ge s$ at which~$\beta$ releases balls. The assumption that $f^{\alpha,\rho}_s(k^\rho_s(\beta))$ is $A$-correct, and $s\ge s_1$, imply that $k= k^\rho_t(\beta) = k^\rho_s(\beta)$ and that $f^{\alpha,\rho}_t(k)\converge$ is $A$-correct, so $t$ is one of the stages as required for~(2). 
\end{proof}

\begin{proof}[Proof of \cref{lem:atomless:splitting:main}]
 Let~$s_1\ge s_0$ be a stage witnessing the assumption on the children $\gamma<\beta$ of~$\alpha$.
Fix some $\rho\in \{0,1\}^{e}$. Let $s_2\ge s_1$ be any stage. We show that there is some $t\ge s_2$ such that $k^\rho_t(\beta)\converge$ and $f^{\alpha,\rho}_t(k^\rho_t(\beta))$ is $A$-correct. Suppose, for a contradiction, that this is not the case. In other words:

\begin{description}
  \item[$\otimes_1$] If $s\ge s_2$ and $k^\rho_s(\beta)\converge=k$, then there is some $t>s$ such that $f^{\alpha,\rho}_t(k)\diverge$. 
\end{description}

Let $m$ be the least such that there is some $s\ge s_2$ such that $f^{\alpha,\rho}_s(m)\diverge$. 
For all $k<m$, $f^{\alpha,\rho}_{s_2}(k)\converge$ is $A$-correct. Hence, our assumption for contradiction implies:

\begin{description}
  \item[$\otimes_2$] For all $s\ge s_2$, either $k^\rho_s(\beta)\diverge$, or $k^\rho_s(\beta)\ge m$. 
\end{description}





We claim:

\begin{description}
  \item[$\otimes_3$] There are infinitely many stage~$s$ at which $f^{\alpha,\rho}_s(m)\diverge$. 
\end{description}

Suppose, for a contradiction, that this is not the case. Thus, $f^{\alpha,\rho}(m)\converge$. Let~$r$ be least such that $f^{\alpha,\rho}_r(m)$ is $A$-correct, i.e., the correct computation $f^{\alpha,\rho}(m)$ is defined at stage $r-1$. By the choice of~$m$, $r>s_2$. The computation is established at some child~$\gamma$ of~$\alpha$: $k^\rho_r(\gamma)=m$. Since $r>s_1$, $\gamma \ge \beta$. By \cref{lem:atomless:block_location}, if $\gamma>\beta$ then $k^\rho_r(\beta)\converge<m$, contradicting $\otimes_2$. Hence, $\gamma=\beta$. But then, by $\otimes_1$, there is some $s>r$ such that $f^{\alpha,\rho}_s(m)\diverge$, giving the desired contradiction that establishes~$\otimes_3$. 

We also observe:

\begin{description}
  \item[$\otimes_4$] If $s\ge s_2$ and $f^{\alpha,\rho}_s(m)\diverge$ then $k^\rho_s(\beta)\diverge$. 
\end{description}

This follows from \cref{lem:atomless:block_location} and~$\otimes_2$: if $k=k^\rho_s(\beta)\converge$ then $f^{\alpha,\rho}_s(k)\converge$ and $k\ge m$, so $f^{\alpha,\rho}_s(m)\converge$. 

\smallskip

There are only finitely many stages~$s$ such that $k^\rho_s(\beta) = m$ and~$\beta$ releases balls at stage~$s$ (this is the main point of using the certification process). To see this, let $s$ be such that $k^\rho_s(\beta)=m$ and~$\beta$ releases balls at stage~$s$.  Let~$r$ be the stage at which the computation $f^{\alpha,\rho}_s(m)$ was defined. Then $\domin_r(m) < f^{\alpha,\rho}_s(m) < \domin_s(m)$, in particular, $\domin_r(m)\ne \domin_s(m)$. Thus, there is at most one such stage~$s$ after the last stage at which $\domin_t(m)$ changes. 

Let $s_3\ge s_2$ be a stage after which there are no such stages~$s$; by $\otimes_3$, assume that $f^{\alpha,\rho}_{s_3}(m)\diverge$.

\begin{description}
  \item[$\otimes_5$] For all $s\ge s_3$, either $f^{\alpha,\rho}_s(m)\diverge$ or $k^\rho_s(\beta)=m$. 
\end{description}

We prove~$\otimes_5$ by induction on $s\ge s_3$. By assumption, this holds for $s=s_3$. Let $s\ge s_3$. First, suppose that $f^{\alpha,\rho}_s(m)\diverge$. By $\otimes_4$, $k^\rho_s(\beta)\diverge$. Hence, if a new computation $f^{\alpha,\rho}_{s+1}(m)$ is defined at stage~$s$, it is established at~$\beta$, setting $k^\rho_{s+1}(\beta)=m$. If not, then $f^{\alpha,\rho}_{s+1}(m)\diverge$. Next, suppose that $k^\rho_s(\beta)=m$. Since $s\ge s_3$, $\beta$ does not release any balls at stage~$s$. Hence, either $k^\rho_{s+1}(\beta)=m$, or $f^{\alpha,\rho}_{s+1}(m)\diverge$. 

Putting $\otimes_5$ and the choice of~$s_3$ together, we obtain:

\begin{description}
  \item[$\otimes_6$] $\beta$ does not release balls at any stage $s\ge s_3$. 
\end{description}

By choice of $s_0$ and~$s_1$, after stage~$s_3$, no ball from $\outside{A}$  is moved to~$\beta^+$, or from $\beta^+$ to any node to tis left. So for all $s\ge s_3$, $\Y{\beta^+,\rho}{s} = \Y{\beta^+,\rho}$, so $\Y{\beta^+,\rho}$ is finite. Let $N = |\Y{\beta^+,\rho}|$. Let $v = u^{\alpha,\rho}(m-1)$ if $m>1$, $v=0$ otherwise. 

By the assumption that \cref{prop:atomless:main_true_path_lemma}(b) holds for~$\alpha$, let $s\ge s_3$ be a stage at which $|\Y{\alpha,\rho}_s\cap C_s|\ge N+ 2m+v$. Let $X$ be the set of $x\in \Y{\alpha,\rho}{s}\cap C_s$ such that $x\notin \Y{\beta^+}{s}$ and $x> v$. 

Then $|X|\ge 2m$, and for all $t\ge s$, $X\subseteq \Y{\alpha,\rho}{t}\setminus \Y{\beta^+}{t}$. By $\otimes_3$ and $\otimes_4$, let $t>s$ be a stage such that $k^\rho_t(\beta)\diverge$. Then by \cref{def:atomless:certified}, either $X\subseteq \SSS{\beta,\rho}_{t}$, or balls are pulled by~$\beta$ at stage~$t$, resulting in $X\subseteq \SSS{\beta,\rho}{t+1}$. In either case, we obtain a stage $r\ge s_3$ at which $f^{\alpha,\rho}_r(m)\diverge$, $k^\rho_r(\beta)\diverge$, and $X\subseteq \SSS{\beta,\rho}{r}$. So at stage~$r$ we define a new computation $f^{\alpha,\rho}_{r+1}(m)$ and set $k^\rho_{r+1}(\beta)=m$. The use $u^{\alpha,\rho}_{r+1}(m)$ is defined to be $x+1$ for some $x\in X$. Since $X\subseteq C_r$, this shows that $f^{\alpha,\rho}_{r+1}(m)$ is $A$-correct. This contradicts $\otimes_3$, which finishes the proof of \cref{lem:atomless:splitting:main}.
\end{proof}

\begin{lemma} \label{lem:atomless:splitting_child_on_true_path}
  Some child of~$\alpha$ lies on the true path. 
\end{lemma}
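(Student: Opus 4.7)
The plan is to show that some child of $\alpha$ releases balls infinitely often; by the definition of the true path at parent splitting nodes, the leftmost such child will then lie on the true path. I will argue by contradiction: suppose that no child of $\alpha$ releases infinitely often.

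Enumerate the children of $\alpha$ from left to right as $\gamma_n = \alpha\conc n$. First I would prove, by induction on~$n$, that every $\gamma_n$ is eventually docile. The inductive step applies \cref{lem:atomless:splitting:main:consequence} to $\gamma_n$: since alternative~(2) fails by our standing assumption, alternative~(1) must hold, and hence for each $\rho \in \{0,1\}^{e}$ some $A$-correct interval $I^{\alpha,\rho}(k)$ waits at $\gamma_n$ at every sufficiently late stage. Once such an interval waits at $\gamma_n$, no new $(\alpha,\rho)$-interval can be established at $\gamma_n$ by clause~(iv) of \cref{def:atomless:certified}; and \cref{lem:atomless:block_location}(d) together with clause~(ii) of the pulling rule in \cref{def:atomless:certified}(a) ensures that, once the permanently-waiting intervals at $\gamma_n$ have their balls residing at $\gamma_n$, no further ball is pullable by $(\gamma_n,\rho)$. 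Combined with the hypothesis that $\gamma_n$ never releases after some stage, this yields docility of $\gamma_n$. I expect this ``case~(1) implies docility'' implication to be the main technical obstacle, because it requires a careful bookkeeping of which balls reside at $\gamma_n$ versus its descendants once intervals have settled.

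Next, for each $n$ and each $\rho \in \{0,1\}^{e}$, let $k^{*}_{n,\rho}$ be the smallest $k$ such that $I^{\alpha,\rho}(k)$ is $A$-correct and waits at $\gamma_n$ at every sufficiently large stage. An $A$-correct interval has a unique permanent host, and by \cref{lem:atomless:block_location}(a) the map $k \mapsto \beta^{\alpha,\rho}(k)$ is non-decreasing in the left-to-right order on $\alpha$'s children; these two observations force $k^{*}_{n,\rho} \to \infty$ as $n \to \infty$, for every fixed $\rho$. Define an $A$-computable function $g_\rho$ by $g_\rho(k) = f^{\alpha,\rho}(k)$ when $I^{\alpha,\rho}(k)$ is $A$-correct and $g_\rho(k) = 0$ otherwise. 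Since $\domin$ dominates every $A$-computable function and $\{0,1\}^{e}$ is finite, there is some $N$ so that for all $n \ge N$ and every $\rho$, $\domin(k^{*}_{n,\rho}) > f^{\alpha,\rho}(k^{*}_{n,\rho})$.

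Finally I would fix such an $n$ and choose a stage~$t$ so late that the approximations $\domin_t$ and $\beta^{\alpha,\rho}_t(k)$ for $k \le k^{*}_{n,\rho}$ have stabilized, the status of every $(\alpha,\rho)$-interval at $\gamma_n$ or to its left has stabilized, and no child to the left of $\gamma_n$ still takes any action after stage~$t$. At such a stage, for each $\rho$ the smallest $(\alpha,\rho)$-interval waiting at $\gamma_n$ is exactly $I^{\alpha,\rho}(k^{*}_{n,\rho})$, and $\domin_t(k^{*}_{n,\rho}) > f^{\alpha,\rho}_t(k^{*}_{n,\rho})$, so $\gamma_n$ is certified at stage~$t$. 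Case~(c) of the construction then forces $\gamma_n$ to release balls at stage~$t$, contradicting the docility of $\gamma_n$ proved in the inductive argument. Hence some child of $\alpha$ must release infinitely often, and the leftmost such child lies on the true path.
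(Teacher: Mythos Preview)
Your argument is correct and follows essentially the same route as the paper: assume no child releases infinitely often, show by induction that every child is eventually docile, and then use domination by~$\domin$ to force some sufficiently-far-right child to become certified and release, a contradiction.

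One small point: your function $g_\rho$ is not obviously $A$-computable as defined, since deciding whether $I^{\alpha,\rho}(k)$ ever receives an $A$-correct definition is \emph{a priori} only $\Sigma^0_1(A)$. The paper handles this by first observing that $f^{\alpha,\rho}$ is total: since each child $\gamma_n$ permanently holds some $A$-correct $I^{\alpha,\rho}(k^*_{n,\rho})$ and $k^*_{n,\rho}\to\infty$, and since $f^{\alpha,\rho}_s(k)\converge$ forces $f^{\alpha,\rho}_s(k')\converge$ for all $k'<k$ (\cref{lem:atomless:block_location}(a)), every $I^{\alpha,\rho}(k)$ is eventually $A$-correct. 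Then $g_\rho = f^{\alpha,\rho}$ is genuinely $A$-partial-computable and total, hence dominated by~$\domin$. You have all the ingredients for this step; you just need to make the totality explicit rather than hiding it behind the ``$g_\rho(k)=0$ otherwise'' clause.
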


Recall that this means that some child of~$\alpha$ releases balls infinitely often. 

\begin{proof}
  Suppose, for a contradiction, that no child of~$\alpha$ releases balls infinitely often. By induction on the children of~$\alpha$, from left to right, we see, using \cref{lem:atomless:splitting:main:consequence}, that for every child~$\beta$ of~$\alpha$, for every $\rho\in \{0,1\}^e$, $k^\rho(\beta)\converge$. Fixing~$\rho$, since the map $\beta\mapsto k^\rho(\beta)$ is injective (\cref{lem:atomless:block_location}), we see that $f^{\alpha,\rho}$ is total. 

  Since each $f^{\alpha,\rho}$ is $A$-computable, $\domin$ dominates $\max_{\rho} f^{\alpha,\rho}$. Say this domination starts at $k^*$. Let~$\beta$ be a child of~$\alpha$ that lies sufficiently to the right, so that for all $\rho\in \{0,1\}^e$, $k^\rho(\beta)\ge k^*$. Let~$s$ be sufficiently late so that for each~$\rho$, for all~$t\ge s$, $k^\rho_t(\beta)\converge = k^\rho(\beta)$, and $\domin_s(k^\rho_t) > f^{\alpha,\rho}(k^\rho(\beta))$. But then, $\beta$ is certified at each stage $t\ge s$, and so, will release balls -- contradicting \cref{lem:equivalences_of_stability}.
\end{proof}

Let~$\beta$ be the child of~$\alpha$ that lies on the true path.

\begin{lemma} \label{lem:atomless:true_path_child_is_well_behaved}
  \Cref{prop:atomless:main_true_path_lemma} holds for both~$\beta$ and~$\beta^+$. 
\end{lemma}

\begin{proof}
  By \cref{lem:atomless:splitting:main:consequence}, for every child $\gamma<\beta$ of~$\alpha$, for all~$\rho$, $k^\rho(\gamma)\converge$. By \cref{lem:equivalences_of_stability}, eventually, each such child stops pulling any balls (whether from~$\outside{A}$ or not), and does not release any balls. Thus, for each such~$\gamma$, only finitely many balls ever enter $\Y{\gamma}$, and each such ball is eventually removed from the machine, or stops moving. This gives part~(a) of \cref{prop:atomless:main_true_path_lemma} for~$\beta^+$ (and hence for~$\beta$ as well). 

  Part~(b) follows from \cref{lem:atomless:splitting:main:consequence} as well. Fix $\rho\in \{0,1\}^e$. Let~$t$ be a stage. If $k^\rho_t(\beta)\converge=k$, $f^{\alpha,\rho}_t(k)$ is $A$-correct, and $\beta$ releases balls at stage~$t$, then by \cref{lem:atomless:block_location:block_size}, $|\SSS{\beta^+,\rho}{t+1}\cap C_{t+1}|\ge 2k$. By construction, in fact, for each $i=0,1$, $|\SSS{\beta^+,\rho\conc i}{t+1}\cap C_{t+1}|\ge k$. 

  If $t'>t$, $k^\rho_{t'}(\beta)\converge = k'$, and $f^{\alpha,\rho}_{t'}(k')$ is $A$-correct, then $k'>k$, showing that these numbers~$k$ go to~$\infty$.
\end{proof}



\begin{corollary} \label{cor:atomless:the_true_path_is_infinite}
  The true path is infinite, and \cref{prop:atomless:main_true_path_lemma} holds for every $\alpha$ on the true path. 
\end{corollary}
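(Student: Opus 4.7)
The plan is to prove the two claims simultaneously by induction on the length of a node $\alpha$, where the induction hypothesis is: ``$\alpha$ lies on the true path, and \cref{prop:atomless:main_true_path_lemma} holds for $\alpha$.'' The base case $\alpha=\emptystring$ is the root. By definition of the true path, $\emptystring$ lies on it; part~(a) of \cref{prop:atomless:main_true_path_lemma} is vacuous, and part~(b) is exactly what was established in ``The root'' subsection above, using that the underlying enumeration of $A$ is a true-stage enumeration with respect to $f(s)=s^2$.

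For the inductive step, suppose $\alpha$ lies on the true path and \cref{prop:atomless:main_true_path_lemma} holds for $\alpha$. We produce a node on the true path that properly extends $\alpha$ and verify the proposition for it. There are three cases depending on the type of $\alpha$.

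\emph{Case 1: $\alpha$ is an $e$-decision node.} Exactly one of the statements $\psi(\alpha,D)$ (for $D\subseteq \{0,1\}^e$) holds, and for that $D$ some $\seqell{\psi(\alpha,D),n}$ is unbounded, so the sequence $\seqell{\alpha\conc D_n}$ is unbounded. By the leftmost-child selection in the definition of the true path, $\alpha$ has a child $\beta$ on the true path, and by the verification in ``The decision case'' subsection, \cref{prop:atomless:main_true_path_lemma} holds for $\beta$. \emph{Case 2: $\alpha$ is a parent $e$-splitting node.} \Cref{lem:atomless:splitting_child_on_true_path} yields a child $\beta$ of $\alpha$ on the true path (namely, the leftmost child that releases balls infinitely often), and the first \cref{lem:atomless:true_path_child_is_well_behaved} gives \cref{prop:atomless:main_true_path_lemma} for $\beta$. \emph{Case 3: $\alpha$ is a child $e$-splitting node.} By definition, its unique child $\beta^+$ (an $(e+1)$-decision node) lies on the true path, and the second \cref{lem:atomless:true_path_child_is_well_behaved} gives \cref{prop:atomless:main_true_path_lemma} for $\beta^+$.

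In every case the induction step produces a strict extension, so the true path is infinite, and the proposition propagates along it. There is really no new obstacle to overcome here; the work was already done in the splitting case (in particular in \cref{lem:atomless:splitting:main} and its consequence), which is precisely where extension along the true path is nontrivial. The corollary is thus just the bookkeeping that packages the previous case-by-case verifications into a single inductive statement.
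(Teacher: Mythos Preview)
Your proposal is correct and matches the paper's intended argument: the corollary is stated without proof precisely because it is meant to be the induction that assembles the root case, the decision case, and the two splitting lemmas (\cref{lem:atomless:splitting_child_on_true_path} and the two versions of \cref{lem:atomless:true_path_child_is_well_behaved}) into a single statement, exactly as you have written it out. The only cosmetic point is that in Case~3 you call the unique child $\beta^+$ when your node is named~$\alpha$; writing $\alpha^+$ would be cleaner, but this does not affect correctness.
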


\bigskip

\subsubsection{The rest} 

The rest is now straightforward. For each $\rho\in 2^{<\w}$, let 
\[
	Z(\rho)
\]
be the collection of balls $x\notin H$ whose permanent label extends~$\rho$. 

\begin{lemma} \label{lem:atomless:sets_nonempty}
	Let~$\alpha$ on the true path be an $e$-decision or splitting node. For every $\rho \in \{0,1\}^{e}$, $\Y{\alpha,\rho}\ne \emptyset$. 
\end{lemma}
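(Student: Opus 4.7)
The plan is to mirror the proof of Lemma~\ref{lem:maximal:permanent_residents_on_true_path}, with true balls $C_s$ playing the role of $A$-true stages. Fix $\alpha$ on the true path and $\rho\in \{0,1\}^{e}$. By Proposition~\ref{prop:atomless:main_true_path_lemma}(a), fix a stage~$s_0$ after which no $\Left{\gamma}{\alpha}$ pulls or eliminates any ball from $\outside{A}$; by Proposition~\ref{prop:atomless:main_true_path_lemma}(b), the set $\Y{\alpha,\rho}{s}\cap C_s$ is nonempty for some $s\ge s_0$. Let $x$ be the smallest $y\in \outside{A}$ for which there exist stages $t\ge s\ge s_0$ with $y\in \Y{\alpha,\rho}{t}$ and $y\le \max(\Y{\alpha,\rho}{s}\cap C_s)$; such $x$ exists, since any element of $\Y{\alpha,\rho}{s}\cap C_s$ works with $t=s$. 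Fix witnessing stages $t_1\ge s_1\ge s_0$ and some $z\in \Y{\alpha,\rho}{s_1}\cap C_{s_1}$ with $x\le z$.

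I will argue by induction on $r\ge t_1$ that $x\in \Y{\alpha,\rho}{r}$ and $x=\min \Y{\alpha,\rho}{r}$; since balls eventually stop moving, this shows that $x$ becomes a permanent resident of some $\alpha'\succeq \alpha$ with permanent label extending~$\rho$, and hence $\Y{\alpha,\rho}\ne \emptyset$. Minimality at each step rests on the observation that if $y<x$ lies on the machine at a stage $r\ge s_1$, then $y\in Q_r\subseteq \outside{A}_r\subseteq \outside{A}_{s_1}$, and $y<z\in Q_{s_1}$ forces $y\in Q_{s_1}$; the $A$-correctness of~$z$ at~$s_1$ then propagates below~$z$, giving $y\in C_{s_1}\subseteq \outside{A}$. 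If, in addition, $y\in \Y{\alpha,\rho}{r}$, then $y$ satisfies the defining condition for~$x$ with $(t,s)=(r,s_1)$, contradicting the minimality of~$x$.

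For the inductive step, suppose $x$ resides at $\alpha'\succeq \alpha$ with label $\rho'\succeq \rho$ at stage~$r$, and that some action by $(\delta,\rho^*)$ at stage~$r$ might affect~$x$ (a pull, an elimination, or a release). The region conditions $x\in \Y{\delta^-,\rho^*}{r}\setminus \Y{\le\!\delta,\rho^*}{r}$ (or their analogues for $\gamma^+$-pulls and for releases) immediately rule out $\delta\preceq \alpha'$ and $\delta$ to the right of~$\alpha$; while $\delta$ to the left of~$\alpha$ is impossible for $r\ge s_0$ and $x\in \outside{A}$. The remaining possibility is $\delta^-\succeq \alpha$. Comparing levels on the ternary tree (using $|\alpha|\in \{3e,3e+1,3e+2\}$ and $|\rho|=e$) together with the fact that $\rho^*$ and $\rho$ are both prefixes of~$\rho'$ forces $\rho^*\succeq \rho$, and hence $\Y{\delta^-,\rho^*}\subseteq \Y{\alpha,\rho}$. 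The inductive hypothesis now gives $x=\min \Y{\delta^-,\rho^*}{r}$, ruling out elimination; any pull or release moves $x$ to a node extending $\delta$ with label extending $\rho^*$ (and hence~$\rho$), keeping it in $\Y{\alpha,\rho}$. Since $x\in \outside{A}$, case~(e)(i) of the construction cannot remove~$x$ from the machine.

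The main obstacle is the label bookkeeping in the last paragraph: pulls by children of parent splitting nodes can retract a ball's label to a shorter string, and releases at child splitting nodes extend the label by one symbol, so a~priori an action at stage~$r$ could move $x$ into a residence/label pair that no longer witnesses membership in $\Y{\alpha,\rho}$. The level-comparison argument on the ternary decision/splitting tree is precisely what guarantees this cannot happen whenever the action is not already excluded by a region condition or by $r\ge s_0$.
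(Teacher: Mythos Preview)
Your argument is correct and follows the same strategy as the paper: pick a minimal witness~$x$, then show by induction on stages that $x=\min \Y{\alpha,\rho}{r}$ for all late~$r$, so that~$x$ is never eliminable and hence lies in $\Y{\alpha,\rho}$. Two minor points of comparison: the paper first reduces to the case that~$\alpha$ is an $e$-decision node (the splitting cases then follow from the Corollary via the deeper decision nodes), and it chooses~$x$ directly as $\min\{\min(C_s\cap \Y{\alpha,\rho}{s}):s>s_0\}$, so that $x\in C_{s_1}$ and the minimality step is just ``$C_{s_1}\subseteq C_{t+1}$''; your choice of~$x$ (closer to the maximal-set version) works too but costs you the extra paragraph about propagating $A$-correctness below~$z$. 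Your careful label/level bookkeeping for pulls and releases is a detail the paper leaves implicit.
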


\begin{proof}
	Very much like the proof of \cref{lem:maximal:permanent_residents_on_true_path}, except that replacing true stages by~$C_s$ actually makes things slightly simpler. We may assume that~$\alpha$ is an $e$-decision node. Let~$s_0$ be a stage after which nodes that lie to the left of~$\alpha$ do not eliminate any balls, or pull any balls in~$\outside{A}$.  Let
	\[
		x = \min \bigcup_{s>s_0} (C_s\cap \Y{\alpha,\rho}{s}).
	\]
	$x$ exists by \cref{prop:atomless:main_true_path_lemma}. Say $s_1>s_0$ and $x\in C_{s_1}\cap \Y{\alpha,\rho}{s_1}$. By induction on $t\ge s_1$ we see that $x = \min \Y{\alpha,\rho}{t}$. For $t = s_1$ this is because $C_{s_1}$ is an initial segment of~$Q_{s_1}$. Suppose that $x = \min \Y{\alpha,\rho}{t}$. Then~$x$ is not eliminable by any node $\gamma\succeq \alpha$, and so $x\in \Y{\alpha,\rho}{t+1}$. Let $y<x$. If $y\in \Y{\alpha,\rho}{t+1}$, then as $C_{s_1}\subseteq C_{t+1}$, we have $y\in C_{t+1}$, contradicting the minimality of~$x$. 
\end{proof}

\begin{corollary} \label{cor:atomless:sets_infinite}
	Let~$\alpha$ on the true path be an $e$-decision or splitting node. For every $\rho \in \{0,1\}^{e}$, $\Y{\alpha,\rho}$ is infinite. 
\end{corollary}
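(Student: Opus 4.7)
My plan is to leverage the infinitude of the true path (\cref{cor:atomless:the_true_path_is_infinite}) and iterate \cref{lem:atomless:sets_nonempty} at deeper levels. The key observation is that $\Y{\alpha',\rho'}\subseteq \Y{\alpha,\rho}$ whenever $\alpha'\succeq \alpha$ and $\rho'\succeq \rho$, while for a fixed~$\alpha'$ the sets $\Y{\alpha',\rho'}$ indexed by distinct $\rho'$ of the same length are pairwise disjoint (each ball in $\outside{H}$ has a unique permanent label by \cref{lem:atomless:basic_label_facts}, and two incomparable strings of the same length cannot both be initial segments of one label). Thus, by branching into many $\rho'$ extending $\rho$, I obtain many disjoint nonempty subsets of $\Y{\alpha,\rho}$.

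The steps proceed as follows. First, fix an arbitrary $e'\ge e$ and locate an $e'$-decision node $\alpha'$ on the true path with $\alpha'\succeq \alpha$; this exists because the true path is infinite and a node of length $3e'$ on it is, by definition, an $e'$-decision node. Second, for each of the $2^{e'-e}$ many strings $\rho'\in \{0,1\}^{e'}$ extending $\rho$, apply \cref{lem:atomless:sets_nonempty} to the pair $(\alpha',\rho')$ to select a witness $x_{\rho'}\in \Y{\alpha',\rho'}$, which by monotonicity lies in $\Y{\alpha,\rho}$. Third, invoke the disjointness noted above to conclude that the family $\{x_{\rho'}\}_{\rho'}$ consists of $2^{e'-e}$ distinct elements of $\Y{\alpha,\rho}$. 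Since $e'$ was arbitrary, $\Y{\alpha,\rho}$ is infinite.

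I do not anticipate a real obstacle: the corollary is essentially a counting argument assembled from \cref{lem:atomless:sets_nonempty}, \cref{cor:atomless:the_true_path_is_infinite}, and the uniqueness of permanent labels. The only subtlety worth flagging is to make sure that ``$\Y{\alpha',\rho'}$'' in my argument really refers to balls whose \emph{permanent} label extends $\rho'$, so that the disjointness claim is unambiguous; this is already how $\Y{\alpha,\rho}$ was defined in the paragraph preceding the corollary (via $Z(\rho)$ and the permanent labelling guaranteed by \cref{lem:atomless:basic_label_facts}).
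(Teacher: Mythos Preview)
Your argument is correct and is essentially the approach the paper intends: the corollary is stated without proof, immediately after \cref{lem:atomless:sets_nonempty}, and the implied derivation is exactly the branching-and-disjointness count you describe. One trivial adjustment: if~$\alpha$ is an $e$-splitting node, then the $e$-decision node on the true path lies \emph{above}~$\alpha$, so you should take $e'>e$ (rather than $e'\ge e$) to guarantee $\alpha'\succeq\alpha$; this does not affect the argument.
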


It follows that each $Z(\rho)$ is infinite.

\begin{lemma} \label{lem:atomless:all_balls_on_true_path}
	For every~$\alpha$ on the true path, $\Y{\alpha} =^* \outside{H}$. 
\end{lemma}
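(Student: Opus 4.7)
The proof will proceed by induction on $|\alpha|$, following the pattern of \cref{lem:all_balls_on_true_path}. The base case $\alpha = \emptystring$ is immediate from the preceding lemmas: $\Y{\emptystring} = \outside{H}$. For the inductive step, assuming $\Y{\alpha^-} =^* \outside{H}$, I aim to show $\Y{\alpha^-} \setminus \Y{\alpha}$ is finite. Decomposing
\[
	\Y{\alpha^-} \setminus \Y{\alpha} \;=\; \SSS{\alpha^-} \cup \bigcup \set{\Y{\gamma}}{\gamma \text{ is a child of } \alpha^-,\ \gamma \ne \alpha}
\]
and using that $\Y{<\!\alpha}$ is finite (by \cref{prop:atomless:main_true_path_lemma}(a) together with \cref{lem:atomless:finitely_many_permanent_balls_to_the_left}), the task reduces to showing that $\SSS{\alpha^-} \cap \outside{H}$ is finite and that $\Y{\gamma} \cap \outside{H}$ is finite for each child $\gamma$ of $\alpha^-$ lying strictly right of $\alpha$. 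I split into three cases according to the type of $\alpha^-$.

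\emph{Decision parent} (so $\alpha = \alpha^-\conc D_n$ is a parent splitting node). The argument for $\SSS{\alpha^-} \cap \outside{H}$ adapts the proof of \cref{lem:all_balls_on_true_path} using the modified pullability/eliminability of \cref{def:atomless:decision_pullable}: fixing $\rho \in \{0,1\}^{|\alpha^-|/3}$, any $x \in \SSS{\alpha^-,\rho} \cap \outside{H}$ with $x > \min \Y{\alpha^-,\rho}$ is, once $\oneell{\alpha}{s} > x$, eliminable by $(\alpha,\rho)$. By \cref{prop:atomless:main_true_path_lemma}(a), no $\Left{\gamma}{\alpha}$ eventually acts on balls outside~$A$, so the construction either eliminates $x$ (contradicting $x \in \outside{H}$) or $\alpha$ pulls $x$ (contradicting permanence at $\alpha^-$); the $W_e$ restriction for $\rho \in D$ only forces elimination of $x \notin W_e$. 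For a right-sibling $\gamma$ with bounded $\seqell{\gamma}$, $\Y{\gamma}$ is finite as in \cref{prop:atomless:main_true_path_lemma}(a); for right-siblings with unbounded $\seqell{\gamma}$, the second clause of \cref{def:atomless:decision_pullable}\ref{item:atomless:pull:new} combined with $\max \Y{\alpha,\rho}{s} \to \infty$ (from \cref{cor:atomless:sets_infinite}) ensures $\alpha$ eventually steals back any ball $\gamma$ might accumulate.

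\emph{Parent-splitting parent} (so $\alpha$ is the true-path child splitting node of $\alpha^-$). By \cref{lem:atomless:splitting:main:consequence}(ii), $\alpha$ releases infinitely often. Immediately after each release, $\alpha$ has no waiting $(\alpha^-,\rho)$-interval, so by \cref{def:atomless:certified}(a) every ball in $\Y{\alpha^-,\rho}\setminus \Y{\le\!\alpha,\rho}$ is pullable by $(\alpha,\rho)$; the priority rule together with the eventual docility of all left-siblings (established in the proof of \cref{lem:atomless:splitting_child_on_true_path}) forces $\alpha$ to pull them all. Hence $\SSS{\alpha^-} \cap \outside{H}$ is empty after some stage; and the right-siblings of $\alpha$ are themselves eventually docile by the same argument, making each $\Y{\gamma}$ finite.

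\emph{Child-splitting parent} (so $\alpha$ is the unique child of $\alpha^-$). Right-siblings do not arise. A ball $x \in \SSS{\alpha^-,\rho} \cap \outside{H}$ sits either in an already released $(\alpha^{--},\rho)$-interval at $\alpha^-$ --- hence pullable by $\alpha$ via \cref{def:atomless:decision_pulling} and promptly pulled --- or in a currently waiting or undefined interval at $\alpha^-$. By \cref{lem:atomless:splitting:main:consequence}(ii) applied to $\alpha^{--}$, for each $\rho$ infinitely many $(\alpha^{--},\rho)$-intervals are correctly established and released at $\alpha^-$, sending their balls to $\alpha$ via the release step; any stragglers still at $\alpha^-$ are re-absorbed by the use-minimal definition rule the next time an interval is established, whose eventual release again pushes them into $\alpha$'s subtree. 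This bounds $\SSS{\alpha^-} \cap \outside{H}$ by the finite count of balls held in the single live waiting interval per~$\rho$. I expect this last case to be the main obstacle: one must carefully track balls transiting between the waiting, released, and undefined states under $A$-changes and interval redefinitions, and verify that no $x \in \outside{H}$ lingers at $\alpha^-$ forever. The certification mechanism in case~(ii) of \cref{lem:atomless:splitting:main:consequence} guarantees every correctly defined interval eventually exits its waiting state, so every such ball is eventually pushed to $\alpha$ or enumerated into $H$.
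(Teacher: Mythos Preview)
Your inductive structure matches the paper's (very brief) proof, which singles out only your Case~2 as ``the new part'' and handles it exactly as in your first sentence there: after each release, $\alpha$ has no waiting interval and so pulls everything in $\Y{\alpha^-,\rho}{s}\setminus\Y{\le\!\alpha,\rho}{s}$, which already disposes of both $\SSS{\alpha^-}$ and every right-sibling $\Y{\gamma}$ at once. Your further claim that right siblings~$\gamma$ are ``eventually docile by the same argument'' is \emph{false} and should be dropped: the docility argument in the proof of \cref{lem:atomless:splitting_child_on_true_path} requires every child of~$\alpha^-$ lying left of~$\gamma$ to be docile, but~$\alpha$ itself sits left of~$\gamma$ and releases infinitely often. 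The same over-elaboration appears in Case~1, where your separate treatment of right siblings (splitting on whether $\seqell{\gamma}$ is bounded) is unnecessary, since eliminability in \cref{def:atomless:decision_pullable} already applies uniformly to all of $\Y{\alpha^-,\rho}{s}\setminus\Y{\le\!\alpha,\rho}{s}$.

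You flag Case~3 as the main obstacle; the paper (perhaps too tersely) does not treat it separately. Your sketch is incomplete: the ``re-absorbed by the use-minimal definition rule'' step does not go through as stated, since defining a new interval at~$\alpha^-$ requires that no interval be waiting there, which may fail immediately after re-assignment from the right. A cleaner argument: \cref{lem:atomless:splitting:main:consequence}(2) gives infinitely many distinct $A$-correct intervals released from~$\alpha^-$, so $f^{\alpha^{--},\rho}$ is total. Hence any permanent $x\in\SSS{\alpha^-,\rho}\cap\outside{H}$ eventually lies in some $A$-correct $I^{\alpha^{--},\rho}(k)$, and \cref{lem:atomless:block_location}(d),(e) force $\beta^{\alpha^{--},\rho}(k)=\alpha^-$ (a left-sibling value would place~$x$ below that sibling, not at~$\alpha^-$). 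That interval is then either waiting at~$\alpha^-$ --- so the next release sends~$x$ to~$\alpha$ --- or already released --- so~$x$ is pullable by~$\alpha$ via \cref{def:atomless:decision_pulling}; either way~$x$ leaves $\SSS{\alpha^-}$, a contradiction.
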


\begin{proof}
	Like the proof of \cref{lem:all_balls_on_true_path}. The new part is when~$\alpha$ is a child $e$-splitting node; we need to show that every ball that lies to the right of~$\alpha$ but below~$\alpha^-$ is eventually pullable by~$\alpha$. This follows from the fact that~$\alpha$ releases balls infinitely often; if $\alpha$ releases balls at stage~$s$, then $k^\rho_{s+1}(\alpha)\diverge$ for each~$\rho$, and then every such ball~$x$ is pullable by~$\alpha$ at stage $s+1$. 
\end{proof}

As a result, for all~$\rho$, if $\alpha$ is an $e$-decision or splitting node on the true path, then $Z(\rho) =^* \Y{\alpha,\rho}$. 

\begin{lemma} 
	For every~$\rho$, $H\cup Z(\rho)$ is c.e.
\end{lemma}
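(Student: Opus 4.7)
Fix $\rho\in 2^{<\w}$, set $e = |\rho|$, and let $\alpha^*$ denote the $e$-decision node on the true path (which exists by \cref{cor:atomless:the_true_path_is_infinite}). The plan is to exhibit a c.e.\ set
\[
	X_\rho \,:=\, H \,\cup\, \bigcup_s \Y{\alpha^*,\rho}{s}
\]
and show $X_\rho =^* H\cup Z(\rho)$. Although identifying $\alpha^*$ requires $\Delta^0_3$ information, once we treat it as a fixed finite string on the tree, the sequence $\smallseq{\Y{\alpha^*,\rho}{s}}$ is uniformly computable in~$s$, so $X_\rho$ is c.e.; this realizes the non-uniform c.e.\ index promised in the remark about $\Delta^0_3$ machinery at the start of \cref{sec:atomless_supersets}. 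Since a set which is $=^*$ to a c.e.\ set is itself c.e., the lemma follows.

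The inclusion $H\cup Z(\rho)\subseteq^* X_\rho$ is immediate from the remark following \cref{lem:atomless:all_balls_on_true_path}, which yields $Z(\rho) =^* \Y{\alpha^*,\rho}$: almost every $x\in Z(\rho)$ permanently resides at some $\alpha_x\succeq \alpha^*$ with permanent label extending $\rho$, so $x\in \Y{\alpha^*,\rho}{s}\subseteq X_\rho$ for all sufficiently late stages~$s$.

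For $X_\rho\subseteq^* H\cup Z(\rho)$, let $T = \set{\beta}{\beta\succeq \alpha^*}$ denote the subtree rooted at $\alpha^*$. The plan rests on three observations.
\emph{(1)}~Balls move in strictly Kleene--Brouwer-decreasing order on the tree; this is verified by inspecting each of \cref{def:atomless:decision_pullable,def:atomless:certified,def:atomless:decision_pulling} and the release case~(c) of the construction, where each move is either from a parent to a child or from a position below a right sibling of the destination to the destination itself.
\emph{(2)}~Applying \cref{prop:atomless:main_true_path_lemma}(a) at $\alpha^*$, the hypothesis of \cref{lem:atomless:finitely_many_permanent_balls_to_the_left} is satisfied, so $\Y{<\alpha^*}$ is finite; by~(1), any ball $x\notin A$ ever pulled to a node strictly left of $\alpha^*$ becomes permanently stuck there, so only finitely many such $x$ exist.
\emph{(3)}~Inside $T$ the first $e$ coordinates of a ball's label are invariant: every node of $T$ has level at least $3e$, so labels in $T$ always have length at least $e$, and each possible label-modifying action restricted to~$T$ preserves the first $e$ coordinates. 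Namely, a pull within $T$ by a parent or child $e'$-splitting node ($e'\ge e$) replaces the label by an initial segment of length~$e'\ge e$ of the current label (since the pullability condition $x\in \Y{\beta^-,\rho'}{s}$ forces $\rho'$ to be an initial segment of the current label), while a pull by a decision node via \cref{def:atomless:decision_pulling} or a release via case~(c) merely appends a single symbol.

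Now take $x\in X_\rho\setminus H$; then $x\notin A$ and there is a stage~$s_0$ with $x$ residing at a node of $T$ at stage~$s_0$ with label extending $\rho$. By~(1), the only way $x$ could leave $T$ after stage $s_0$ is via a pull to a node strictly to the left of $\alpha^*$; by~(2), this happens for at most finitely many $x$. Setting these exceptional balls aside, $x$ remains in $T$ at every stage $s\ge s_0$, so by~(3) the first $e$ coordinates of its permanent label are equal to~$\rho$, giving $x\in Z(\rho)$. The main delicate point is observation~(3): one must track label lengths carefully against node levels, and in particular note that a pull by the root node $\alpha^*$ itself (via \cref{def:atomless:decision_pulling}) originates from $\alpha^{*-}$, which is \emph{outside} $T$, so such pulls represent $x$'s entry into $T$ rather than a within-$T$ label change and are therefore excluded from~(3).
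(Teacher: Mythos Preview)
Your proof is correct and follows essentially the same approach as the paper. The paper's own proof is extremely terse---it simply asserts that for the $e$-decision node~$\alpha$ on the true path, $\Y{\alpha,\rho}$ is c.e.\ because ``from some stage onwards, any ball outside~$H$ which enters~$\Y{\alpha}$ with label~$\rho$, will stay in $\Y{\alpha,\rho}$''---whereas you carefully unpack this into the Kleene--Brouwer monotonicity of ball movement, the finiteness of $\Y{<\!\alpha^*}$, and the invariance of the first~$e$ label coordinates inside the subtree rooted at~$\alpha^*$; these are exactly the facts underlying the paper's one-line claim.
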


\begin{proof}
	Let~$\alpha$ be an $e$-decision node on the true path, where $|\rho| = e$. Then $\Y{\alpha,\rho}$ is c.e.: from some stage onwards, any ball outside~$H$ which enters~$\Y{\alpha}$ with label~$\rho$, will stay in $\Y{\alpha,\rho}$. 
\end{proof}

\begin{lemma} 
	For every~$\rho$, $Z(\rho) =^* Z(\rho\conc 0)\cup Z(\rho\conc 1)$, and $Z(\rho\conc 0)\cap Z(\rho\conc 1) = \emptyset$. 
\end{lemma}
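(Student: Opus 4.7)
The plan is to split the claim into three parts: the containment $Z(\rho\conc 0) \cup Z(\rho\conc 1) \subseteq Z(\rho)$, which is immediate from the definitions; the disjointness $Z(\rho\conc 0) \cap Z(\rho\conc 1) = \emptyset$, which follows because each $x \in \outside{H}$ has a unique permanent label and $\rho\conc 0, \rho\conc 1$ are incomparable in $2^{<\w}$; and the containment modulo finite $Z(\rho) \subseteq Z(\rho\conc 0) \cup Z(\rho\conc 1)$ up to a finite set, which is the only substantive part. This last containment amounts to showing that only finitely many $x \in \outside{H}$ have a permanent label of length at most $|\rho|$.

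For the main step, I would set $e = |\rho|$ and, using \cref{cor:atomless:the_true_path_is_infinite}, fix a node $\alpha$ on the true path that is an $(e+1)$-decision node, so that $|\alpha| = 3(e+1)$. By \cref{lem:atomless:all_balls_on_true_path}, $\Y{\alpha} =^* \outside{H}$, so all but finitely many $x \in \outside{H}$ have permanent residence at some $\beta \succeq \alpha$. Any such $\beta$ satisfies $|\beta| \ge 3(e+1)$, and so, by the trichotomy of node types according to $|\beta| \bmod 3$, $\beta$ is an $e''$-decision, parent $e''$-splitting, or child $e''$-splitting node for some $e'' \ge e+1$.

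The crucial input is label stabilization: inspection of the construction shows that the only cases that alter a ball's label, namely case~(a) (pulling) and case~(c) (releasing), also relocate the ball. Hence once $x$ has permanent residence at $\beta$, its label is constant from some stage on, equal to some $\sigma = \sigma(x)$. By \cref{lem:atomless:basic_label_facts}(a), at every stage at which $x$ resides at $\beta$ the label has length $e''$, so $|\sigma| = e'' \ge e+1$. If $x \in Z(\rho)$, then $\sigma \succeq \rho$; combined with $|\sigma| > |\rho|$, this forces $\sigma \succeq \rho\conc i$ for some $i \in \{0,1\}$, placing $x$ in $Z(\rho\conc 0) \cup Z(\rho\conc 1)$, as required. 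I do not anticipate a serious obstacle here; the only subtle point is the label-stabilization observation, which is immediate once one verifies that no rule in the construction changes a ball's label without also moving the ball.
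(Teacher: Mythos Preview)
Your proof is correct and follows essentially the same approach as the paper. The paper's proof is a one-liner: picking an $(e{+}1)$-decision node~$\alpha$ on the true path with $e=|\rho|$, it observes that $\Y{\alpha,\rho} = \Y{\alpha,\rho\conc 0}\cup \Y{\alpha,\rho\conc 1}$, and then implicitly invokes the remark (stated just after \cref{lem:atomless:all_balls_on_true_path}) that $Z(\sigma) =^* \Y{\alpha,\sigma}$ for each relevant~$\sigma$. Your argument unpacks exactly this remark --- combining $\Y{\alpha} =^* \outside{H}$, label stabilization, and \cref{lem:atomless:basic_label_facts}(a) --- rather than quoting it, so the two proofs differ only in level of detail.
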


\begin{proof}
	If $\alpha$ is an $e$-decision node on the true path with $e= |\rho|+1$, then $\Y{\alpha,\rho} = \Y{\alpha,\rho\conc 0} \cup \Y{\alpha,\rho\conc 1}$.
\end{proof}

\begin{lemma} 
	For every~$e$, $W_e\cap \outside{H}$ is the union of finitely many sets $Z(\rho)$, up to finite difference. 
\end{lemma}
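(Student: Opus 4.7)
The plan is to locate the $e$-decision node $\alpha$ on the true path, read off its winning outcome $D\subseteq\{0,1\}^e$, and verify that $W_e\cap\outside{H}=^*\bigcup_{\rho\in D}Z(\rho)$. Let $\beta=\alpha\conc D_n$ be the parent $e$-splitting node on the true path, where $D$ is the unique subset of $\{0,1\}^e$ for which $\psi(\alpha,D)$ holds. Since every pulling label used by $\beta$ or any descendant of $\beta$ has length at least $e$, and releases only extend labels, every ball in $\Y{\beta}$ has a permanent label of length $\ge e$; hence $\Y{\beta}=\bigsqcup_{\rho\in\{0,1\}^e}\Y{\beta,\rho}$. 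Combined with $\outside{H}=^*\Y{\beta}$ from \cref{lem:atomless:all_balls_on_true_path} and $Z(\rho)=^*\Y{\beta,\rho}$ for each $\rho\in\{0,1\}^e$ (as noted after that lemma), the statement reduces to showing $\Y{\beta,\rho}\subseteq W_e$ when $\rho\in D$ and $\Y{\beta,\rho}\cap W_e$ is finite when $\rho\notin D$.

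For $\rho\in D$, the key observation is that $\beta$ is the unique node that ever pulls balls into $\Y{\beta}$ from outside: every descendant of $\beta$ pulls only from its own parent or from right-siblings (and their descendants), all of which already lie in $\Y{\beta}$. By \cref{prop:atomless:main_true_path_lemma}(a) applied to $\beta$, no node to the left of $\beta$ pulls any $x\in\outside{A}$ after some stage, so for $x\in\outside{A}$ whose permanent label extends $\rho$ there is a last stage $s^*$ at which $x$ enters $\Y{\beta}$, which must happen via $\beta$ pulling $x$ and setting its label to some $\rho_0\in\{0,1\}^e$. After $s^*$, every change to $x$'s label is either an extension by a release or an overwrite by a pull below $\beta$ whose pulling label has length $\ge e$; in either case the first $e$ bits of $x$'s label are preserved. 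Hence the length-$e$ prefix of the permanent label equals $\rho_0$, and since this prefix must also equal $\rho$, we have $\rho_0=\rho$. Clause~(iv) of \cref{def:atomless:decision_pullable} then forces $x\in W_{e,s^*}\subseteq W_e$.

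For $\rho\notin D$, since $\psi(\alpha,D)$ holds there is a constant $k_0$ such that $|\Y{\alpha,\rho}{s}\cap W_{e,s}\cap C_s|\le k_0$ for every stage $s$. Suppose toward a contradiction that $\Y{\beta,\rho}\cap W_e$ has at least $k_0+1$ elements, and pick such a finite $F$. At any sufficiently late new-balls stage $s$ we have $F\subseteq Q_s\cap W_{e,s}\cap \Y{\beta,\rho}{s}\subseteq \Y{\alpha,\rho}{s}\cap W_{e,s}$, and $A_s$ agrees with $A$ on $\{0,1,\ldots,\max F\}$, so $F\subseteq C_s$, yielding $|\Y{\alpha,\rho}{s}\cap W_{e,s}\cap C_s|\ge k_0+1$, a contradiction. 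Combining both cases gives $W_e\cap\outside{H}=^*\bigcup_{\rho\in D}Z(\rho)$, a finite union of sets of the form $Z(\rho)$. The main subtlety is the first-$e$-bits invariance underlying the $\rho\in D$ case; this rests on matching the tree's level structure ($e$-decision nodes at level $3e$, parent and child $e$-splitting nodes at levels $3e+1$ and $3e+2$) with the lengths of the pulling labels used at each kind of node.
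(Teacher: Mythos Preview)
Your proof is correct and follows the same approach as the paper's (which simply says ``like the proof of \cref{lem:maximality:maximality}''). You correctly identify the outcome set $D$ for the $e$-decision node's true-path child $\beta$, and split into the cases $\rho\in D$ (where clause~(iv) of \cref{def:atomless:decision_pullable} forces $\Y{\beta,\rho}\subseteq W_e$) and $\rho\notin D$ (where the failure of the relevant conjunct of $\psi(\alpha,D)$ bounds $|\Y{\alpha,\rho}{s}\cap W_{e,s}\cap C_s|$). One simplification: your ``first-$e$-bits invariance'' argument is exactly \cref{lem:atomless:basic_label_facts}(b), which you could cite directly rather than re-deriving from the level structure; also, in the $\rho\notin D$ case there is no need to restrict to new-balls stages.
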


\begin{proof}
	Like the proof of \cref{lem:maximality:maximality}. Let~$\alpha$ on the true path be a child of an $e$-decision node. Then for all $\rho\in \{0,1\}^{e}$, either $\Y{\alpha,\rho}\subseteq W_e$, or $\Y{\alpha,\rho}\cap W_e =^* \emptyset$. The result follows since  $Z(\rho)=^* Y(\alpha,\rho)$ for each such~$\rho$, and $Z(\rho)$ for $\rho\in \{0,1\}^e$ partition $\outside{H}$ (up to finite difference). 
\end{proof}

\begin{corollary} 
	$H$ is atomless hyperhypersimple. 
\end{corollary}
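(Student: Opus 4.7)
The plan is to verify directly that the assignment $\rho\mapsto [H\cup Z(\rho)]$ extends to a Boolean algebra isomorphism $\Phi\colon B(2^{<\w})\to \+L^*(H)$; since $B(2^{<\w})$ is the atomless Boolean algebra, and Lachlan's characterisation says that hyperhypersimplicity of~$H$ is equivalent to $\+L^*(H)$ being a Boolean algebra, this simultaneously yields both halves of the corollary. First note that $H$ is coinfinite, because each $Z(\rho)$ is infinite (\cref{cor:atomless:sets_infinite}, applied to any decision node on the true path of length $\ge |\rho|$) and contained in $\outside{H}$.

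To build $\Phi$, I would check that the generators $[H\cup Z(\rho)]$ satisfy the defining relations of $B(2^{<\w})$ stated at the beginning of \cref{sec:atomless_supersets}. The identity $Z(\emptystring) =^* \outside{H}$ gives $\Phi(\emptystring) = 1$; the splitting and disjointness identities $Z(\rho) =^* Z(\rho\conc 0)\cup Z(\rho\conc 1)$ and $Z(\rho\conc 0)\cap Z(\rho\conc 1)=\emptyset$ give the sibling join relation. For an arbitrary pair of incomparable nodes $\sigma,\tau\in 2^{<\w}$, iterating the splitting identity shows by induction that $Z(\sigma')\subseteq^* Z(\sigma)$ whenever $\sigma'\succeq \sigma$, so letting $\rho$ be the longest common ancestor of $\sigma$ and $\tau$ we get $Z(\sigma)\subseteq^* Z(\rho\conc 0)$ and $Z(\tau)\subseteq^* Z(\rho\conc 1)$, hence $Z(\sigma)\cap Z(\tau)$ is finite and $\Phi(\sigma)\wedge \Phi(\tau)=0$. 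Surjectivity of $\Phi$ is then immediate from the previous lemma: every c.e.\ $W\supseteq H$ satisfies $W\setminus H =^* \bigcup_{\rho\in D} Z(\rho)$ for some finite $D\subseteq 2^{<\w}$, so $[W]$ is a finite join of generators.

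The one remaining point, and the step I would expect to examine most carefully, is injectivity of $\Phi$. Because of the relations already verified, every nonzero element of $B(2^{<\w})$ can be written as $\bigvee_{i\le n} \sigma_i$ for some antichain $\sigma_1,\dots,\sigma_n$ with $n\ge 1$; its image under $\Phi$ is $[H\cup Z(\sigma_1)\cup\cdots\cup Z(\sigma_n)]$, which is nonzero in $\+L^*(H)$ iff this union is infinite, and this is immediate because each $Z(\sigma_i)$ is already infinite by \cref{cor:atomless:sets_infinite}. Thus $\Phi$ is an isomorphism, $\+L^*(H)\cong B(2^{<\w})$, and since $B(2^{<\w})$ is the atomless Boolean algebra, the corollary follows.
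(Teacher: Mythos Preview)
Your proposal is correct and matches the paper's intended approach: the corollary is stated without proof precisely because it is meant to follow immediately from the five properties listed at the start of \cref{sec:atomless_supersets}, which the preceding lemmas have just verified; you have simply spelled out the Boolean-algebra isomorphism that those properties encode. One small simplification you might note: for incomparable $\sigma,\tau$ the intersection $Z(\sigma)\cap Z(\tau)$ is literally empty (not merely finite), since no permanent label can extend both, so the inductive reduction to the common ancestor is not needed.
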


\section{Other Boolean algebras} 
\label{sec:other_algebras}

In this section, we explain how to modify the proof of \cref{thm:atomless} to show:

\begin{theorem} \label{thm:other_algebras}
Let $A$ be $\lowtwo$ and coinfinite. For any $\Sigma_3$-Boolean algebra~$B$ there is some c.e.\ $H\supseteq A$ such that $\+L^*(H)\cong B$. 
\end{theorem}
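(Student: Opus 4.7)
The plan is to reduce the general $\Sigma^0_3$ Boolean algebra case to a controlled modification of the construction in Section~\ref{sec:atomless_supersets}. By Lachlan's classification, we may present $B$ as $B(T)$ for a computable tree $T \subseteq 2^{<\w}$ closed under prefixes, where the set of ``live'' nodes of~$T$ (those that are leaves of~$T$ or have infinite extensions in $T$) is $\Pi^0_2$; nodes that are not live are collapsed to zero. Since $A$ is $\lowtwo$, this $\Pi^0_2$ data is accessible to the $\Delta^0_3$ guessing apparatus of the construction, so we can include, along the tree of the construction, a guess as to whether each $\rho \in 2^{<\w}$ is dead, is a leaf of~$T$, or is a live non-leaf, using the same $\seqell{\cdot}$ mechanism used for the questions~$\psi(\alpha,D)$.

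We keep the decision/splitting skeleton of Section~\ref{sec:atomless_supersets} intact, but thread the new guesses into the outcomes of the splitting nodes (or, equivalently, interpose short ``pruning'' nodes between a child $e$-splitting node and the next $(e+1)$-decision node). At a parent $e$-splitting node~$\alpha$ and a label $\rho \in \{0,1\}^{e}$, the behaviour depends on the guessed status of~$\rho$ and of its two children:
\begin{sublemma}
\item if $\rho$ is dead, the child on the true path enumerates every ball reaching $\Y{\alpha,\rho}$ into $H$, analogously to the ``eliminate balls outside $W_e$'' step in the maximal proof;
\item if $\rho$ is a leaf of~$T$, the splitting machinery for~$\rho$ is disabled: balls labeled $\rho$ pass through all future splitting nodes unchanged, so that $\Y{\alpha',\rho}$ remains a single unsplit piece at every higher decision node~$\alpha'$ on the true path;
\item if $\rho$ is a live non-leaf with both children live, we split exactly as in Section~\ref{sec:atomless_supersets}; if only one child $\rho\conc i$ is live, we relabel the entire stream uniformly to $\rho\conc i$ (with no certification-and-split step).
\end{sublemma}
Setting $Z(\rho) = \Y{\alpha,\rho} \setminus H$ for $\alpha$ the $|\rho|$-decision node on the true path, we expect $Z(\rho)$ to be infinite exactly when $\rho$ is live, each $H \cup Z(\rho)$ to be c.e., and every c.e.\ $W \supseteq H$ to be decided at an $e$-decision node on the true path as a finite Boolean combination of the surviving $Z(\rho)$'s, exactly as in \cref{lem:atomless:sets_nonempty,lem:atomless:all_balls_on_true_path} and the subsequent lemmas. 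This will give $\+L^*(H) \cong B(T) \cong B$.

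The main technical obstacle is integrating the new pruning decisions with the certification/domination machinery underlying \cref{lem:atomless:splitting:main}. When some labels at a splitting level are leaves while others are live non-leaves, the splitting mechanism still has to advance for the live non-leaf labels at the same parent node, even though the leaf labels impose no splitting requirement. The fix is to redefine certification so that a child $\gamma$ of a parent $e$-splitting node is certified using only the labels $\rho$ that are guessed live non-leaves at stage~$s$; leaf labels contribute a block of $A$-correct balls to $\SSS{\gamma,\rho}{s}$ that~$\gamma$ simply releases downward (without splitting) whenever it is certified on its active labels, and dead labels are handled by elimination. Because the $\Pi^0_2$ guesses about $T$-membership stabilise along the true path, the $\Delta^0_2$ function~$\varphi$ still eventually majorises all relevant block functions $f^{\alpha,\rho}$ simultaneously, and the inductive argument for \cref{prop:atomless:main_true_path_lemma} goes through with only bookkeeping changes, yielding the theorem.
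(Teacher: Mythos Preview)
Your presentation of $B$ is the gap. For a computable tree $T\subseteq 2^{<\w}$, the set of extendible nodes is already $\Pi^0_1$ (by K\"onig's lemma, $\rho$ is extendible iff every level above $|\rho|$ meets~$T$ above~$\rho$, a decidable check per level), and leaves are decidable; so your ``live'' set is $\Pi^0_1$, and the resulting algebra has its zero ideal generated by a $\Sigma^0_1$ set. Even a genuinely $\Pi^0_2$ liveness predicate would only yield $\Sigma^0_2$ Boolean algebras --- you do not reach all $\Sigma^0_3$ algebras this way, and Lachlan's classification does not assert otherwise. The paper instead uses the representation stated just before the construction: $B\cong B(T)$ for a $\Delta^0_3$ tree~$T$, under the convention that each node is the join of its children (so leaves, and more generally non-extendible nodes, are~$0$). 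What must be guessed along the construction tree is simply membership $\rho\in T$, a $\Delta^0_3$ question, and nothing more.

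With the correct representation, your three-way case split and the modification of certification become unnecessary. The paper simply interposes a fourth kind of node --- $e$-\emph{tree nodes}, now at length $4e$ --- whose children guess $E=T\cap\{0,1\}^e$ via the usual $\seqell{\cdot}$ mechanism, and then pull balls with labels in~$E$ and eliminate balls with labels outside~$E$, exactly mirroring the decision-node dichotomy (without any $W_e$ side-condition). Decision and splitting nodes are left \emph{untouched}: splitting proceeds for every~$\rho$, and certification is unchanged. A leaf $\rho\in T$ needs no special handling, since its balls are split as usual and then eliminated at the $(|\rho|{+}1)$-tree node (because $\rho\conc 0,\rho\conc 1\notin T$), so $Z(\rho)$ comes out finite automatically. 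The only change to the verification is that \cref{prop:atomless:main_true_path_lemma}(b) is asserted only for $\rho\in E(\alpha)$ rather than all of $\{0,1\}^e$.
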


Let~$B$ be a $\Sigma_3$-Boolean algebra. This means that it is the quotient of the (standard computable copy of the) atomless Boolean algebra by a $\Sigma_3$ ideal. An equivalent characterisation is in terms of Boolean algebras generated by trees. Recall that for a tree $T\subseteq 2^{<\w}$ we defined the Boolean algebra $B(T)$, generated from the elements of~$T$ according to the rules: (i) every $\tau\in T$ is the join of its children; and (ii) if $\s,\tau\in T$ are incomparable then $\s\wedge \tau = 0_{B(T)}$. We noted that this implies that if $\tau$ is non-extendible on~$T$, then $\tau = 0_{B(T)}$ (and the atoms of $B(T)$ are $\tau$ where~$\tau$ isolates a unique path of~$T$). Note that there are other ways of producing Boolean algebras from trees, for example, ones in which leaves of the tree are atoms. However, the advantage of the representation that we chose is:
\begin{itemize}
  \item The $\Sigma_3$ Booleans algebras are precisely the algebras $B(T)$, where $T$ is $\Delta_3$. 
\end{itemize}
(If we used the other representation, we would need $\Sigma_3$ trees). The idea of the modified construction is to add nodes that decide, for each $\rho\in \{0,1\}^e$, whether $\rho\in T$ or not, again utilising $\Delta_3$ guessing. A child~$\beta$ of such~$\alpha$ that guesses that $\rho\notin T$ will attempt to enumerate all of $\Y{\alpha,\rho}$ into~$H$. The technique is identical to eliminating balls to decide $W_e$ on $\Y{\alpha,\rho}$.

\subsection{The construction}

Most of the construction is as above, so we explain the new ingredients. We now have four kinds of nodes:
\begin{itemize}
  \item $e$-tree nodes, of length $4e$; 
  \item $e$-decision nodes, of length $4e+1$; 
  \item parent and child $e$-splitting nodes, of lengths $4e+2$ and $4e+3$. 
\end{itemize}

For an $e$-tree node $\alpha$ and $E\subseteq \{0,1\}^e$, the statement $\psi(\alpha,E)$ is:
\[
  E = T\cap \{0,1\}^e. 
\]
This is a finite Boolean combination of $\Delta_3$ statements. Again the children are $\alpha\conc E_n$ for $E\subseteq \{0,1\}^e$ and $n<\w$. We define $\seqell{\alpha\conc E_n} = \seqell{\psi(\alpha,E),n)}$ as above. 

\begin{definition} \label{def:more_BAs:new_grabbable}
  Let $\beta = \alpha\conc E_n$ be a child of an $e$-tree node~$\alpha$, and let $\rho\in \{0,1\}^e$. We say that a ball~$x$ is \emph{pullable} by $(\beta,\rho)$  at a stage~$s$ if:
  \begin{orderedlist}
 \item $x\in \Y{\alpha,\rho}{s}\setminus \Y{\le\!\beta,\rho}{s}$; 
 \item $\rho\in E$; and 
 \item either:
 \begin{itemize}
   \item $|\Y{\beta,\rho}{s}|< \oneell{\beta}{s}$, or
   \item $\Y{\beta,\rho}{s}\ne \emptyset$ and $x< \max \Y{\beta,\rho}{s}$.
 \end{itemize}
  \end{orderedlist}
  We say that~$x$ is \emph{eliminable} by $(\beta,\rho)$ at stage~$s$ if:
  \begin{orderedlist}[resume]
  \item $x\in \Y{\alpha,\rho}{s}\setminus \Y{\le\!\beta,\rho}{s}$; 
  \item $\rho\notin E$; 
  \item $x\ne \min \Y{\alpha,\rho}{s}$; and
  \item $x<\oneell{\alpha}{s}$.
  \end{orderedlist}
\end{definition}

We need to modify \cref{def:atomless:certified}\ref{item:atomless:def_certified:ready:certified} so that we require certification only for those $\rho\in \{0,1\}^e$ that are guessed to be on the tree~$T$. If $\alpha$ is an $e$-decision, $e$-splitting, or $(e+1)$-tree node, then we let
\[
  E(\alpha)
\]
denote the set $E\subseteq \{0,1\}^e$ such that for some~$n$, $\epsilon\conc E_n\preceq \alpha$, where~$\epsilon$ is the $e$-tree node preceding~$\alpha$. We let $E(\lambda) = \{\lambda\}$. Then in the new version of \cref{def:atomless:certified}\ref{item:atomless:def_certified:ready:certified}, we require that the conditions hold for those $\rho\in E(\beta)$. 

The rest of the construction follows the atomless hyperhypersimple construction above, verbatim. 

\subsection{The verification}

For the verification, we only need to replace the second part of \cref{prop:atomless:main_true_path_lemma}. The new version of \cref{prop:atomless:main_true_path_lemma}(b) replaces $\{0,1\}^e$ by $E(\alpha)$:
\begin{itemize}
  \item If~$\alpha$ lies on the true path, then for all $\rho\in E(\alpha)$, for all~$k$, there is some~$s$ such that $|\Y{\alpha,\rho}{s}\cap C_s|\ge k$. 
\end{itemize}

The rest of the verification follows without change, showing that if $\rho\in T$ then $Z(\rho)$ is infinite, while if $\rho\notin T$ then $Z(\rho)$ is finite.

\bibliography{hhs}
\bibliographystyle{alpha}
\end{document}